\documentclass[11pt]{amsart}

\IfFileExists{srcltx.sty}{\usepackage{srcltx}}

\numberwithin{equation}{section}

\usepackage{tikz}
\usetikzlibrary{arrows}

\usepackage{bbold}

\usepackage[latin1]{inputenc}
\usepackage{xspace,amssymb,amsfonts,euscript}
\usepackage{amsthm,amsmath}
\usepackage{palatino}
\usepackage{euscript}
\input xy \xyoption {all}

\usepackage{pdfsync}
\usepackage{graphicx}

\RequirePackage{color}
\definecolor{myred}{rgb}{0.75,0,0}
\definecolor{mygreen}{rgb}{0,0.5,0}
\definecolor{myblue}{rgb}{0,0,0.65}

\RequirePackage{ifpdf}
\ifpdf
  \IfFileExists{pdfsync.sty}{\RequirePackage{pdfsync}}{}
  \RequirePackage[pdftex,
   colorlinks = true,
   urlcolor = myblue, 
   citecolor = mygreen, 
   linkcolor = myred, 
   pagebackref,
   bookmarksopen=true]{hyperref}
\else
  \RequirePackage[hypertex]{hyperref}
\fi
\RequirePackage{ae, aecompl, aeguill} 


    \def\CM{{\mathbb{C}}}
    \def\DM{{\mathbb{D}}}

  \def\hg{{\mathfrak h}}

    \def\RM{{\mathbb{R}}}

    \def\ZM{{\mathbb{Z}}}


    \def\BC{{\mathcal{B}}}
    
    \def\DC{{\mathcal{D}}}

    \def\HC{{\mathcal{H}}}
    
    \def\JC{{\mathcal{J}}}



\def\a{\alpha}
\def\b{\beta}
\def\g{\gamma}

\def\e{\varepsilon}

\newcommand{\nc}{\newcommand} \newcommand{\renc}{\renewcommand}

\newcommand{\rdots}{\mathinner{ \mkern1mu\raise1pt\hbox{.}
    \mkern2mu\raise4pt\hbox{.}
    \mkern2mu\raise7pt\vbox{\kern7pt\hbox{.}}\mkern1mu}}

\def\ov{\overline}
\def\un{\underline}

\def\to{\rightarrow}

\def\longto{\longrightarrow}

\def\onto{\twoheadrightarrow}
\nc{\triright}{\stackrel{[1]}{\to}}
\nc{\longtriright}{\stackrel{[1]}{\longto}}
\newcommand{\sumset}{\stackrel{\scriptstyle{\oplus}}{\scriptstyle{\subset}}}


\nc{\ot}{\otimes}

\nc{\HotRR}{{}_R\mathcal{K}_R}
\nc{\HotR}{\mathcal{K}_R}
\nc{\excise}[1]{}
\nc{\defect}{\text{df}}
\nc{\h}[1]{\underline{H}_{#1}}

\nc{\ptau}{\tau}

\nc{\Ga}{\mathbb{G}_a} 
\nc{\Gm}{\mathbb{G}_m} 

\nc{\Perv}{{\mathbf{P}}}

\nc{\IH}{{\mathrm{IH}}}

\nc{\ic}{\mathbf{IC}}

\nc{\gl}{{\mathfrak{gl}}}
\renc{\sl}{{\mathfrak{sl}}}
\renc{\sp}{{\mathfrak{sp}}}

\nc{\HBM}{H^{BM}}
\nc{\id}{\textrm{id}}

 \DeclareMathOperator{\Hom}{Hom}

 \DeclareMathOperator{\ch}{ch}

\newtheorem{thm}{Theorem}[section]
\newtheorem{lem}[thm]{Lemma}

\newtheorem{prop}[thm]{Proposition}
\newtheorem{cor}[thm]{Corollary}
  
\newtheorem{conj}[thm]{Conjecture}

\theoremstyle{definition}

\newtheorem{ex}[thm]{Example}

\theoremstyle{remark}
\newtheorem{remark}[thm]{Remark}

\def\bfa{\mathbf a}
\def\bfx{\mathbf x}

\def\gr{\textrm{gr}}

\nc{\simto}{\stackrel{\sim}{\to}}
\nc{\oBC}{\ov{\BC}} 

\nc{\pH}{H}
\nc{\pBC}{{}^p \BC}
\nc{\cell}{{\bf c}}

\begin{document}

\title [Relative hard Lefschetz for Soergel bimodules]{Relative hard Lefschetz for \\ Soergel bimodules}

\author{Ben Elias}
\address{University of Oregon, Eugene, Oregon, USA}
\email{belias@uoregon.edu}
\urladdr{}

\author{Geordie Williamson} 
\address{Max-Planck-Institut f\"ur Mathematik, Bonn, Germany}
\email{geordie@mpim-bonn.mpg.de}
\urladdr{}

\begin{abstract} We prove the relative hard Lefschetz
  theorem for Soergel bimodules. It follows that the structure
  constants of the Kazhdan-Lusztig basis are unimodal. We explain why 
  the relative hard Lefschetz theorem implies that
  the tensor category associated by Lusztig to any
  2-sided cell in a  Coxeter group is rigid and pivotal.
\end{abstract}

\maketitle

\tableofcontents

\section{Introduction} \label{sec:introduction}

Let $(W,S)$ denote a Coxeter system and $\HC$ its Hecke algebra. It is an
algebra over $\ZM[v^{\pm 1}]$ with
standard basis $\{ H_x \; | \; x \in W\}$ and Kazhdan-Lusztig basis $\{
\un{H}_x \; | \; x \in W \}$. The Kazhdan-Lusztig positivity
conjectures are the statements:
\begin{enumerate}
\item (``positivity of Kazhdan-Lusztig polynomials'') if we write $\un{H}_x = \sum h_{y,x} H_y$, then $h_{y,x} \in
  \ZM_{\ge 0}[v]$;
\item (``positivity of structure constants'') if we write $\un{H}_x\un{H}_y = \sum \mu_{x,y}^z \un{H}_z$ then
  $\mu_{x,y}^z \in \ZM_{\ge 0}[v^{\pm 1}]$.
\end{enumerate}
These conjectures have been known since the 1980s for Weyl groups
of Kac-Moody groups \cite{KaL2, Springer}, using sophisticated geometric technology. More recently in \cite{EWHodge} the authors proved
these conjectures algebraically for arbitrary Coxeter systems by
establishing Soergel's conjecture.
 
Let us briefly recall the setting of Soergel's conjecture.
For a certain reflection representation $\hg$ of $(W,S)$ over the real
numbers, Soergel constructed a category $\BC$ of Soergel bimodules,
which is a full subcategory of the category of graded $R$-bimodules,
where $R$ denotes the polynomial functions on $\hg$. The category of
Soergel bimodules $\BC$ is monoidal under tensor product of
bimodules, and is closed under grading shift.
Soergel showed that one has a canonical isomorphism
\[
\ch : [\BC] \simto \HC
\]
of $\ZM[v^{\pm 1}]$-algebras between the split Grothendieck group of Soergel bimodules and the
Hecke algebra. (The split Grothendieck group $[\BC]$ is an algebra via
$[B][B'] := [B \otimes_R B']$ and is a $\ZM[v^{\pm 1}]$-algebra via
$v[B] := [B(1)]$, where $(1)$ denotes a grading shift.) In proving this isomorphism, Soergel constructed
certain bimodules $B_x$ for each $x \in W$ which give representatives for all indecomposable Soergel
bimodules (up to isomorphism
and grading shift). Soergel's conjecture is the statement that
$\ch([B_x]) = \un{H}_x$, which immediately implies the Kazhdan-Lusztig
positivity conjectures. (Property (1) follows because the coefficient of $H_y$ in
$\ch([B])$ is given by the graded dimension of a certain hom
space. Property (2) follows because $\mu_{x,y}^z$ gives the
graded multiplicity of $B_z$ as a summand in $B_x \otimes_R B_y$.)

%



The geometric techniques used to understand the
Kazhdan-Lusztig basis yield another remarkable property of the
structure constants $\mu_{x,y}^z$. Using duality, one can show that $\mu_{x,y}^z$ is preserved under swapping $v$ and $v^{-1}$. The quantum numbers
\[
[m] := \frac{v^{m}-v^{-m}}{v - v^{-1}} = v^{-m+1} + v^{-m + 3} + \dots
  + v^{m-3} + v^{m-1} \in \ZM[v^{\pm 1}]
\]
for $m \ge 1$ give a $\ZM$-basis for those elements of $\ZM[v^{\pm
  1}]$ preserved under swapping $v$ and $v^{-1}$. A folklore
conjecture states:\footnote{Unimodality is stated as a 
  question in
  \cite[\S~5.1]{Fokko}, however experts assure the authors that the
  conjecture is much older. In \cite{Fokko}
  positivity properties (2) and (3) are checked for $W$ a finite
  reflection group of $H_4$ by computer (almost three
  trillion polynomials $\mu_{x,y}^z$ need to be computed!). For $H_4$,
  property
  (1) had already been checked by Alvis \cite{Alvis} in 1987. In
  \cite[\S~5.2]{Fokko}
  it is incorrectly stated that the unimodality conjecture is open for
Weyl groups.}
\begin{enumerate}
\item[3)] (``unimodality of structure constants'') if we write
  $\mu_{x,y}^z = \sum_{m \ge 1} a_m [m]$, then $a_m \ge 0$ for all $m$.
\end{enumerate}
(In other words, each $\mu_{x,y}^z$ is the
character of a finite-dimensional $\mathfrak{sl}_2(\CM)$-representation.)

In geometric settings unimodality follows from the
\emph{relative hard Lefschetz theorem} of \cite{BBD}. Recall that the
relative hard Lefschetz theorem states that if $f : X \to Y$ is a
projective morphism of complex algebraic varieties and if $\eta$ is a
relatively ample line bundle on $X$ then for all $i \ge 0$, $\eta$
induces an isomorphism:
\[
\eta^i : {}^p \HC^{-i}( Rf_* IC_X) \simto {}^p \HC^{i}( Rf_* IC_X).
\]
(Here $IC_X$ denotes the intersection cohomology complex on $X$ and
${}^p\HC^i$ denotes perverse cohomology.) In this paper we prove
unimodality for all Coxeter groups, by adapting the relative hard
Lefschetz theorem to the context of Soergel bimodules.

Inside the category of Soergel bimodules we consider the full
subcategory $\pBC$ consisting of direct sums of the indecomposable
self-dual bimodules $B_x$
without shifts. We call $\pBC$ the subcategory of \emph{perverse
  Soergel bimodules}. Soergel's conjecture implies that each $B \in
{}^p \BC$ admits a canonical isotypic decomposition
\[
B = \bigoplus_{x \in W} V_x \otimes_\RM B_x
\]
for certain real (degree zero) vector spaces $V_x$. If a Soergel
bimodule is not perverse, its decomposition into indecomposable
summands of the form $B_x(i)$ is not canonical. However, there is a
canonical filtration on any Soergel bimodule called the \emph{perverse
  filtration}, whose $i$-th subquotient has
indecomposable summands of the form $B_x(-i)$ for some $x \in
W$. Taking the subquotients of this filtration and shifting them
appropriately, one obtains for each $i$ the perverse cohomology
functor 
\[
H^i : \BC \to {}^p \BC.
\]
Any degree $d$ map $B \to B'(d)$ induces a map $H^i(B) \to H^{i+d}(B')$ on
perverse cohomology.

\begin{remark}
  The category $\BC$ is an analogue of semi-simple complexes, $\pBC$
  is an analogue of the category of semi-simple perverse sheaves and
  $H^i$ is an analogue of the perverse cohomology functor.
\end{remark}


This main result of this paper is the following:

\begin{thm}(Relative hard Lefschetz for Soergel bimodules) \label{thm:rhl}
 Let $x, y \in W$ be arbitrary and fix $\rho \in \hg^*$ dominant
 regular (i.e. $\langle \rho, \alpha_s^\vee \rangle > 0$ for all $s
 \in S$). The map
 \begin{align*}
\eta : B_x\otimes_RB_y &\to B_x\otimes_RB_y[2] \\
b \otimes b'  &\mapsto b \otimes \rho b'   = b \rho \otimes b'
 \end{align*}
induces an isomorphism (for all $i \ge 0$)
\[
\b^i : \pH^{-i}(B_x \otimes_R B_y) \simto \pH^i(B_x\otimes_R B_y).
\]
\end{thm}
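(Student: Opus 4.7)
The plan is to prove Theorem~\ref{thm:rhl} by induction on $\ell(x)$, keeping $y$ and $\rho$ fixed, while simultaneously strengthening the inductive hypothesis to include a Hodge--Riemann type signature condition on the primitive subspaces of $\pH^{-i}(B_x \otimes_R B_y)$. As in the proof of absolute hard Lefschetz in \cite{EWHodge}, the bare Lefschetz isomorphism is not strong enough to propagate through the induction on its own; one must carry along signed definiteness of an intersection form in order to close the loop at each step.

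A first reduction is to replace $B_x$ by the Bott--Samelson bimodule $\Bs(\underline{x})$ for a chosen reduced expression $\underline{x}$ of $x$. Since $B_x$ is a direct summand of $\Bs(\underline{x})$ in a way compatible with $\eta$ (which acts only in the middle tensor factor, not touching the $B_x$-side structure), it suffices to prove relative HL and HR for $\Bs(\underline{x}) \otimes_R B_y$ for all reduced expressions. The base case $\underline{x} = \varnothing$ is trivial for HL, since $\Bs(\varnothing) \otimes_R B_y = B_y$ is perverse by Soergel's conjecture, while the HR input reduces to the absolute HR for $B_y$ already established in \cite{EWHodge}. For the inductive step, write $\underline{x} = s \cdot \underline{x}'$ and put $M = \Bs(\underline{x}') \otimes_R B_y$; we must deduce relative HL and HR for $B_s \otimes_R M$ from the corresponding statements for $M$. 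The key structural observation is that tensoring with $B_s$ on the far left commutes with the middle $\eta$-action, and combined with the decomposition $B_s \otimes_R B_s \simeq B_s(1) \oplus B_s(-1)$ this gives controlled formulas relating $\pH^i(B_s \otimes_R M)$ to $\pH^{i \pm 1}(M)$ and the Lefschetz operator on $M$.

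The hard part will be the one-step Hodge-theoretic argument: given relative HR on $M$, upgrade it to relative HL on $B_s \otimes_R M$ via a signature/positivity calculation on the intersection form restricted to the primitive subspace, and then further upgrade HL to HR on $B_s \otimes_R M$ by a deformation argument in the Lefschetz parameter, crucially exploiting that $\rho$ is dominant regular so the required positivity is open in a neighbourhood. The chief new difficulty over the absolute setting of \cite{EWHodge} is that here the Lefschetz operator sits on the interface between $B_s$ and the remaining two factors of the three-factor product $B_s \otimes_R \Bs(\underline{x}') \otimes_R B_y$, rather than acting on the outside of a single bimodule; the Soergel $\Hom$-type pairing and its restriction to the primitive subspace must be reconstructed and analyzed with this middle position in mind. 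Once the one-step analysis is in place, iteration along $\underline{x}$ delivers the theorem.
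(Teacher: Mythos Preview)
Your plan has a structural gap. You strip $B_s$ from the \emph{left} of $\underline{x}$, but the Lefschetz operator $\eta$ sits at the \emph{right} end of $\Bs(\underline{x})$, between $\Bs(\underline{x})$ and $B_y$. So when you write $\Bs(\underline{x}) B_y = B_s \otimes_R M$ with $M = \Bs(\underline{x}') \otimes_R B_y$, the operator is $\id_{B_s} \otimes L_M$, with $L_M$ buried inside $M$; the fresh $B_s$ never touches it. Your sentence ``the Lefschetz operator sits on the interface between $B_s$ and the remaining two factors'' is simply wrong---it sits between $\Bs(\underline{x}')$ and $B_y$---and the decomposition $B_sB_s \simeq B_s(1)\oplus B_s(-1)$ you invoke is irrelevant unless $\underline{x}'$ happens to begin with $s$. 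Consequently the ``one-step Hodge-theoretic argument'' you envisage (modelled on the absolute case in \cite{EWHodge}, where $\rho$ is multiplied \emph{adjacent} to the fresh $B_s$) has no purchase here. Worse, your deformation step from HL to HR has nothing to deform: with a single Lefschetz slot there is no continuous family of operators along which to run a conservation-of-signs argument, and varying $\rho$ itself within the dominant cone does not manufacture one.

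The paper's route is quite different. It runs a double induction on $(\ell(x)+\ell(y),\,\ell(y))$ and, crucially, introduces an auxiliary \emph{three-factor} product $B_x B_s B_{\dot y}$ (where $y = s\dot y > \dot y$) carrying a genuinely two-parameter Lefschetz operator
\[
L_{a,b} \;=\; a\, B_x \rho B_s B_{\dot y} \;+\; b\, B_x B_s \rho B_{\dot y}.
\]
Hard Lefschetz for both $B_x B_y$ and $B_x B_s B_{\dot y}$ comes from a weak-Lefschetz argument: the first differentials $d_x$, $d_{\dot y}$ of the Rouquier complexes $F_x$, $F_{\dot y}$ split-embed the negative perverse cohomology into that of products involving strictly shorter elements on \emph{both} sides, where HR is already known by induction. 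The passage from HL to HR then proceeds by limit arguments in the $(a,b)$-plane: one degenerates to $a=0$ (when $xs>x$, so that $B_xB_s$ is perverse and one can invoke HR for $B_zB_{\dot y}$ with $z\le xs$), or performs an explicit signature computation for $b\gg a>0$ on $B_x(1)\oplus B_x(-1)$ (when $xs<x$), and finally specialises back to $b=0$ to recover $B_xB_y$ as a polarized summand of $B_xB_sB_{\dot y}$. It is precisely this second Lefschetz slot---absent from your setup---that supplies the deformation parameter needed to pin down the signs.
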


\begin{remark}
  A stronger version of the above theorem, involving iterated tensor products
  of indecomposable Soergel bimodules of arbitrary length is still open (see Conjecture
  \ref{conj:HRgeneral}). It is amusing that establishing Conjecture
  \ref{conj:HRgeneral} for Bott-Samelson bimodules (i.e., when all
  $x_i \in S$, in the notation of Conjecture \ref{conj:HRgeneral})
  was the authors' original plan of attack to settle Soergel's
  conjecture. This
  remains a very interesting Hodge theoretic statement that we
  \emph{cannot} prove!
\end{remark}

As was true in our
previous work on hard Lefschetz type theorems for Soergel bimodules \cite{EWHodge, WL}, the inductive proof we use to
establish our main theorem actually requires proving a stronger statement, analogous to the relative Hodge-Riemann bilinear relations \cite{dCM2}. That is, we must calculate the
signatures of certain forms on the multiplicity spaces of $H^{-i}(B_x
\otimes_R B_y)$, see Theorem \ref{thm:mainthm}. The following is an immediate consequence of Theorem \ref{thm:rhl}:

\begin{cor}
  The structure constants $\mu_{x,y}^z$ of multiplication in the
  Kazhdan-Lusztig basis are unimodal.
\end{cor}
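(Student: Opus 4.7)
The strategy is to promote Theorem~\ref{thm:rhl} to an $\sl_2$-representation structure on each isotypic multiplicity space appearing in the perverse cohomology of $B_x \otimes_R B_y$, and then identify the character of each simple $\sl_2$-summand with a quantum integer $[m]$.

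First, I would use Soergel's conjecture to write, for each integer $j$, the canonical isotypic decomposition
\[
\pH^j(B_x \otimes_R B_y) \;\cong\; \bigoplus_{z \in W} V_z^j \otimes_\RM B_z
\]
with finite-dimensional $\RM$-vector spaces $V_z^j$. Because distinct indecomposable perverse Soergel bimodules admit no nonzero morphisms between them, the degree-$2$ map $\eta$ preserves these isotypic components and descends to linear maps $\eta_z : V_z^j \to V_z^{j+2}$; Theorem~\ref{thm:rhl} then asserts that $\eta_z^i : V_z^{-i} \simto V_z^i$ is an isomorphism for every $i \ge 0$, so $\eta_z$ satisfies the hard Lefschetz property on the graded vector space $V_z^\bullet := \bigoplus_j V_z^j$.

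Next, I would invoke the standard Jacobson--Morozov construction that, given such a hard Lefschetz operator, produces an $\sl_2$-action in which $\eta_z$ is the raising operator (raising the grading by~$2$) and $V_z^j$ is the weight-$j$ subspace. Decomposing $V_z^\bullet$ into irreducibles yields nonnegative integer multiplicities $a_m^z$ of the dimension-$m$ simple module $L_m$, whose weight support is $\{-(m-1),-(m-3),\dots,m-1\}$. A short bookkeeping with the perverse filtration---the $j$-th subquotient being built from summands $B_z(-j)$---gives $\mu_{x,y}^z = \sum_{j \in \ZM} (\dim V_z^j)\, v^{-j}$, and each copy of $L_m$ then contributes exactly
\[
v^{-(m-1)} + v^{-(m-3)} + \dots + v^{m-1} = [m]
\]
to $\mu_{x,y}^z$. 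Summing over $m$ gives $\mu_{x,y}^z = \sum_{m \ge 1} a_m^z [m]$ with $a_m^z \ge 0$, which is precisely unimodality.

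All the essential difficulty is contained in Theorem~\ref{thm:rhl} itself; once it is in hand, the argument will be short. The only delicate point is the choice of index conventions: one must identify the perverse-cohomological grading on $V_z^\bullet$ with the $\sl_2$-weight grading (rather than, say, twice the weight) so that a simple summand $L_m$ contributes precisely one quantum integer $[m]$ and not, e.g., a sum of two consecutive ones.
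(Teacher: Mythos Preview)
Your proposal is correct and is exactly the standard unpacking of ``hard Lefschetz implies unimodality'' that the paper leaves implicit (the paper gives no proof, stating only that the corollary is an immediate consequence of Theorem~\ref{thm:rhl}).

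One small remark: invoking Jacobson--Morozov is slightly more than you need. Hard Lefschetz alone already gives the primitive (Lefschetz) decomposition $V_z^{-i} = \bigoplus_{j \ge 0} \eta_z^{\,j} P_z^{-i-2j}$ with $P_z^{-d} = \ker(\eta_z^{d+1} : V_z^{-d} \to V_z^{d+2})$, and each string $P_z^{-d} \oplus \eta_z P_z^{-d} \oplus \cdots \oplus \eta_z^{d} P_z^{-d}$ contributes $(\dim P_z^{-d})\,[d{+}1]$ to $\mu_{x,y}^z$; this is of course the same decomposition your $\sl_2$-action produces, just obtained without constructing the lowering operator.
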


Relative hard Lefschetz for Soergel bimodules also has important
consequences for certain tensor categories associated to cells in
Coxeter groups. Recall that to any two sided cell $\cell \subset W$ in a finite or affine
Weyl group Lusztig has associated a tensor category, which
categorifies the $J$-ring of $\cell$.  These
categories (for finite Weyl groups) are fundamental for the representation theory of finite
reductive groups of Lie type: by results of Bezrukavnikov, Finkelberg
and Ostrik \cite{BFO2} and Lusztig \cite{LusztigTC}, their (Drinfeld) centers are
equivalent to the braided monoidal category of unipotent character
sheaves corresponding to $\cell$.

Given any two sided cell $\cell \subset W$ in an arbitrary Coxeter
group Lusztig has generalised his construction to yield a monoidal category
$\JC$. (Note that $\JC$ is only ``locally unital'' unless $\cell$ contains
finitely many left cells, and the existence of a unit relies on a
conjecture in general, see Remark \ref{rem:unit}.) In the last section of this paper we explain why Theorem
\ref{thm:rhl} implies that $\JC$ is rigid and pivotal (see Theorem
\ref{thm:rigid}). (The rigidity was conjectured by Lusztig
\cite[\S~10]{LusztigTC} when $W$ is finite). This is an important step towards the study
of ``unipotent character sheaves'' associated to any Coxeter
system.

By a theorem of \cite{Mu, ENO}, rigidity of $\JC$ implies that the
(Drinfeld) center of $\JC$ is a modular tensor category. We expect
cells in non-crystallographic Coxeter groups to provide many
new examples of modular tensor categories (see \cite[5.4]{Ostrik}).

\subsection{Acknowledgements} That relative hard Lefschetz is
tied to the rigity of Lusztig's categorifications of the $J$-ring was
suggested to us by Victor Ostrik. We would like to thank him, as
well as Roman Bezrukavnikov, George Lusztig and Noah Snyder for useful
discussions.

%
\section{Background}
\label{sec-background}
%

\subsection{Soergel bimodules and duality}

Let $\hg$ be an $\RM$-linear realization of the Coxeter system
$(W,S)$, as in \cite[\S~2]{Soe3}. Thus $\hg$ is a finite-dimensional $\RM$-vector space, equipped with
linearly independent subsets of
\emph{roots} $\{ \a_s\}_{s \in S} \subset \hg^*$ and \emph{coroots}
$\{ \a_s^\vee\}_{s \in S} \subset \hg$, such that
\[
\langle \alpha_s, \alpha_t^\vee \rangle = -2 \cos (\pi/m_{st}),
\]
where $m_{st}$ denotes the order (possibly $\infty$) of $st \in
W$.\footnote{The choice of roots and coroots plays a
    significant role in this paper, but only up to positive rescaling;
    what is important (in order that we may cite certain results from
    \cite{Soe3} and \cite{EWHodge}) is that our representations is reflection faithful
    \cite{Soe3} and that there be a well-defined notion of positive
    roots. If the reader prefers, they may also take the representation given
    by a realisation of a generalised Cartan matrix.} We have an action of $W$ on $\hg$ given by the formula \[
s(v) = v - \langle \a_s, v \rangle
\a_s^\vee, \] for all $s \in S$ and $v \in \hg$. The contragredient action of $W$ on $\hg^*$ is defined by an analogous formula \[ s(f) = f - \langle f, \a_s^\vee \rangle \a_s, \] for all $s \in S$ and $f \in \hg^*$.

Let $R$ be the ring of polynomial functions on $\hg$, graded so that the linear terms $\hg^*$ have degree 2. It comes equipped with an action of $W$. Define a graded $R$-bimodule
\[ B_s = R \ot_{R^s} R (1) \] 
for each $s \in S$, where $R^s$ denotes the $s$-invariant polynomial
subring. We use the standard convention for grading shifts, so that
the $(1)$ above indicates that the minimal degree element $1 \ot 1$
lives in degree $-1$. Given two graded $R$-bimodules $B, B'$ their tensor product over $R$ is denoted $BB' := B
\otimes_R B'$. For a sequence $\un{w} = (s_1, s_2, \ldots, s_d)$ with $s_i \in S$, the tensor product
\[ BS(\un{w}) = B_{s_1} B_{s_2} \ldots B_{s_d} \]
is called a \emph{Bott-Samelson bimodule}.

Soergel proved in \cite{Soe3} that, when $\un{x}$ is a reduced
expression for an element $x \in W$, there is a unique indecomposable
direct summand $B_x \sumset BS(\un{x})$ which is not isomorphic to a
summand of a shift of any Bott-Samelson bimodule corresponding to a
shorter reduced expression. Moreover, this summand does not depend on
the reduced expression of $x$, up to non-canonical
isomorphism. (Using the main theorem of \cite{EWHodge}
  one can make this isomorphism canonical.) Note that
the two notations for $B_s$ agree.

Let $\BC$ denote the full subcategory of graded $R$-bimodules whose objects are finite direct sums of grading shifts of summands of Bott-Samelson bimodules. The objects in this category
$\BC$ are known as \emph{Soergel bimodules}, and the bimodules $\{B_x\}_{x \in W}$ give a complete list of non-isomorphic indecomposable objects up to grading shift. Because Bott-Samelson
bimodules are closed under tensor product, $\BC$ is as well, and inherits its monoidal structure from $R$-bimodules.

If $B$ is a Soergel bimodule we will often use the symbol $B$ to denote
the identity morphism on $B$. For example, if $f : B' \to B''$ is a morphism then $Bf : BB' \to BB''$ denotes the tensor product of the identity on $B$ with $f$. Similarly, given $r \in R$
of degree $m$, $rB$ (resp. $Br$) denotes the morphism $B \to B(m)$ given by left (resp. right) multiplication by $r$.

For two Soergel bimodules $B$ and $B'$, we let $\Hom(B,B')$ denote the degree zero homomorphisms of $R$-bimodules, and write 
\[
\Hom^\bullet(B, B') = \bigoplus_{m \in \ZM} \Hom(B, B'(m))
\]
for the graded vector space of bimodule homomorphisms of all degrees. A morphism $f \in \Hom(B,B'(m))$ is said to be a \emph{degree $m$} morphism from $B$ to $B'$. By a theorem of
Soergel \cite[Theorem 5.15]{Soe3}, $\Hom^\bullet(B, B')$ is free of finite rank as a left or right $R$-module.

Given a Soergel bimodule $B \in \BC$ its dual is
\[
\DM B := \Hom^{\bullet}_{-R}(B, R)
\]
where $\Hom^{\bullet}_{-R}$ denotes the graded vector space of right $R$-module homomorphisms of all
degrees. We make $\DM B$ into an $R$-bimodule via
$r_1fr_2(b) = f(r_1br_2)$. Because $\DM BS(\un{w}) \cong BS(\un{w})$,
the functor $\DM$ descends to a contravariant equivalence of $\BC$. By the defining property of the indecomposable bimodule $B_x$, we must also have $\DM B_x \cong B_x$. As usual, $\DM \DM B \cong B$ canonically, for any Soergel bimodule $B$.

A \emph{pairing} on two Soergel bimodules $B, B'$ is a homogeneous bilinear form
\[
\langle -, - \rangle : B \times B' \to R
\]
such that $\langle rb, b' \rangle = \langle b, rb' \rangle$ and
$\langle br, b' \rangle = \langle b, b'r\rangle = \langle b, b'
\rangle r$ for all $b \in B, b' \in B$ and $r \in R$. (Note the
asymmetry in the conditions on the left and right
$R$-actions.\footnote{This is the convention used in \cite{EWHodge}. The opposite
  convention is used in \cite{WL}.}) The homogeneous condition states that $\deg b + \deg b' = \deg \langle b, b' \rangle$.
A pairing induces bimodule morphisms $B \to \DM B'$ and $B' \to
\DM B$. We say that a pairing is non-degenerate if one (or
equivalently both) of these morphisms is an
isomorphism.\footnote{Warning: this is stronger than the condition
  $\langle b, B \rangle = 0 \Rightarrow
  b = 0$.}

A \emph{(non-degenerate) form} on a Soergel bimodule is a
(non-degenerate) pairing
\[
\langle -, - \rangle : B \times B \to R
\]
which is in addition symmetric: $\langle b, b' \rangle = \langle b', b
\rangle$ for all $b, b' \in B$. A \emph{polarized Soergel bimodule} is a pair $(B,
\langle -, - \rangle_B)$ where $B \in \BC$ is a Soergel bimodule and
$\langle -, -\rangle_B$ is a non-degenerate form, in which case 
$\langle -, - \rangle_B$ is the \emph{polarization}.

Given a map $f : B \to B'(m)$ between polarized Soergel bimodules its
\emph{adjoint} is the unique map $f^* : B' \to B(m)$ such that
\[
\langle f(b), b' \rangle_B = \langle b, f^*(b') \rangle_{B'} \quad
\text{for all $b \in B, b' \in B'$.}
\]
Equivalently $f^* = \DM f \colon \DM (B'(m)) \to \DM B$, where we use the
polarizations to identify $B = \DM B$, $B'(-m) = \DM (B'(m))$.

\subsection{Perverse cohomology and graded multiplicity spaces} \label{sec:notes}
All morphisms between indecomposable self-dual Soergel bimodules are
of non-negative degree, and those of degree zero are isomorphisms. That is:
\begin{gather}
  \label{eq:deltaxy}
  \Hom(B_x, B_y)  = \begin{cases} \RM & \text{if $x = y$,} \\ 0 &
  \text{otherwise.} \end{cases}   \\
  \label{eq:perv_vanish}
  \Hom(B_x, B_y(m)) = 0 \quad \text{for $x, y \in W$ and $m < 0$,}
\end{gather}
These fundamental Hom-vanishing statements are equivalent to Soergel's
conjecture (see the paragraph following \cite[Theorem 3.6]{EWHodge}).

A Soergel bimodule $B$ is \emph{perverse} if it is isomorphic to a
direct sum of indecomposable bimodules $B_x$ without shifts. We denote
by $\pBC$ the full subcategory of perverse Soergel bimodules. As a
consequence of \eqref{eq:perv_vanish}, any perverse Soergel bimodule
admits a canonical decomposition
\begin{gather} \label{eq:pervdecomp}
B = \bigoplus_{x \in W} V_x \otimes_{\RM} B_x
\end{gather}
for some finite dimensional real vector spaces $V_x$. (Concretely, one
has $V_x = \Hom(B_x, B)$.) The rest of this section is dedicated to understanding what replaces this multiplicity space $V_x$ in case the bimodule $B$ in question is not perverse.

By the classification of indecomposable bimodules, every Soergel
bimodule splits into a direct sum of shifts of perverse bimodules, but
this splitting is not canonical. However, it is a 
consequence of \eqref{eq:deltaxy} and \eqref{eq:perv_vanish} that $B$
admits a unique functorial (non-canonically split) filtration, the
\emph{perverse filtration}, whose subquotients isomorphic to a shift
of a perverse Soergel bimodule, see \cite[\S 6.2]{EWHodge}. Before 
discussing the details, it is worth illustrating this subtle point in
examples. 

\begin{ex} The Bott-Samelson bimodule $B_s B_s$ is isomorphic to $B_s(+1) \oplus B_s(-1)$. The degree $-1$ projection map, that is, the map $B_s B_s \to B_s(-1)$, is canonical up to a
scalar. After all, it is easy to confirm from \eqref{eq:deltaxy} and \eqref{eq:perv_vanish} that $\Hom^{\bullet}(B_s B_s, B_s) \cong \Hom^{\bullet}(B_s(+1) \oplus B_s(-1), B_s)$ is zero
in degrees $\le -2$, and is one-dimensional in degree $-1$. The same can be said about the degree $-1$ inclusion map, that is, the map $B_s(+1) \to B_s B_s$. However, the degree $+1$
projection map $B_s B_s \to B_s(+1)$ (resp. the degree $+1$ inclusion map $B_s(-1) \to B_s B_s$) is not canonical; adding to it an $R$-multiple of the degree $-1$ projection map will
give another valid projection map. Said another way, $B_s(+1)$ is a canonical submodule, and $B_s(-1)$ a canonical quotient, and this filtration of $B_s B_s$ splits, but not
canonically. \end{ex}

\begin{ex} \label{ex:stst} Suppose that $W$ is of type $A_2$ with
  simple reflections $\{ s, t \}$. Then $BS(stst)$ is isomorphic to $B_{sts}(-1) \oplus B_{sts}(+1) \oplus B_{st}$. The degree $-1$
projection map to $B_{sts}(-1)$ is canonical. However, the degree $0$ projection map to $B_{st}$ is not canonical! The morphism space $\Hom(BS(stst), B_{st})$ is two-dimensional; it
has a one-dimensional subspace arising as the composition of the canonical projection to $B_{sts}(-1)$ followed by a non-split map $B_{sts}(-1) \to B_{st}$, and any morphism not in this
one-dimensional subspace will serve as a projection map to $B_{st}$. This example is meant to loudly proclaim that even what appears to be an ``isotypic component,'' such as the summand
$B_{st}$ which is the only one of its kind, is not canonically a direct summand, owing to the presence of other summands with lower degree shifts. \end{ex}

For any $i \in \ZM$, define $\BC^{\le i}$ (resp. $\BC^{>i}$) to be the full additive
subcategory of $\BC$ consisting of bimodules which are isomorphic to
direct sums of $B_x(m)$ with $m \ge -i$ (resp. $m < -i$). In formulas:
\begin{gather*}
  \BC^{\le i} := \langle B_x(m) \; | \; x \in W, m \ge -i
  \rangle_{\oplus, \cong}, \\
  \BC^{> i} := \langle B_x(m) \; | \; x \in W, m < -i
  \rangle_{\oplus, \cong}.
\end{gather*}
Similarly we define $\BC^{<i}$ and $\BC^{\ge i}$. We have $\pBC =
B^{\ge 0} \cap B^{\le 0}$.
We can rephrase \eqref{eq:perv_vanish} as the statement:
\begin{gather}
   \label{eq:perv_vanish2}
  \Hom(\BC^{\le i}, \BC^{>i}) = 0.
\end{gather}

Any Soergel bimodule $B$ admits a unique \emph{perverse filtration}
\[
\dots \subset \tau_{\le i}B \subset \tau_{\le i+1}B \subset \dots
\]
by split inclusions such that $\tau_{\le i} B \subset \BC^{\le i}$ and $B / \tau_{\le i}B \in \BC^{>i}$, see \cite[\S 6.2]{EWHodge}. This is a direct consequence of \eqref{eq:perv_vanish2}. If $f \colon B \to B'$ is a morphism then $f(\tau_{\le i} B) \subset \tau_{\le i} B'$. We have:
\begin{equation}
  \label{eq:pshift}
  \tau_{\le i}(B(m)) = (\tau_{\le i + m}B)(m).
\end{equation}

Dually, every Soergel bimodule has a unique \emph{perverse
  cofiltration}
\[
\dots \onto \tau_{\ge i}B \onto \tau_{\ge i + 1}B \onto \dots
\]
where every arrow is a split surjection, each $\tau_{\ge i}B \in
\BC^{\ge i}$ and the kernel of $B \onto \tau_{\ge i}B$ belongs to
$B^{< i}$. We have:
\begin{equation}
  \label{eq:dualcofilt}
\DM(\tau_{\ge i} B) = \tau_{\le -i} (\DM B).
\end{equation}

The \emph{perverse cohomology} of a Soergel bimodule $B$ is
\[
H^i(B) := (\tau_{\le i} B / \tau_{< i}B) (i).
\]
(The shift $(i)$ is included so that $H^i(B)$ is perverse.) Applying
\eqref{eq:pervdecomp} we
obtain canonical isotypic decompositions
\[
\pH^i(B) = \bigoplus_{z \in W} H^i_z(B) \otimes_\RM B_z.
\]
for certain finite dimensional vector spaces $H_z^i(B)$. We have a non-canonical isomorphism
\[
B \cong \gr B := \bigoplus_{i \in \ZM} H^i(B)(-i)
\]
and canonical isomorphisms
\[
\gr B = \bigoplus_{i, z}  H_z^i(B) \otimes B_z(-i) = \bigoplus_{z}  H^\bullet_z(B)
\otimes_\RM B_z
\]
where $H^\bullet_z(B)$ denotes the graded vector space $\bigoplus
H_z^i(B)$. Below we call the graded vector spaces $H^\bullet_z(B)$
\emph{multiplicity spaces}.

\begin{remark} To reiterate the point made in Example \ref{ex:stst}:
  in general, it is not possible to produce separate multiplicity spaces $H^\bullet_z(B)$, for different $z \in W$, without first passing to the associated graded of the perverse filtration. \end{remark}

Let $B, B'$ be Soergel bimodules and $f : B \to B'(m)$ a
morphism. Then by \eqref{eq:perv_vanish2} and \eqref{eq:pshift} we have
\[
f(\tau_{\le i} B) \subset \tau_{\le i}(B'(m)) = (\tau_{\le i+
  m}B')(m).
\]
Thus $f$ induces a map
\begin{equation*}
  f : H^i(B) \to H^{i+m}(B')
\end{equation*}
of Soergel bimodules, and hence a degree $m$ map $\gr f$ from $\gr B$ to $\gr B'$. For any $z \in W$ this induces a map
\begin{equation*}
  \gr_z f : H_z^\bullet(B) \to H_z^{\bullet+ m}(B')
\end{equation*}
of graded vector spaces. To simplify notation, we use $f$ to denote
all these maps: $f$, $\gr f$, $\gr_z f$ for all $z \in W$. We refer to
the maps $\gr f$ and $\gr_z f$ as the \emph{maps induced on perverse
  cohomology}.

The following triviality is important later:

\begin{lem} \label{lem:pzero}
  If $f : B \to B'(m)$ is a map such that, for all $i \in \ZM$,
\[
f(\tau_{\le i}B) \subset
  \tau_{\le i-1}(B'(m))\]
then $f$ induces the zero map on perverse
  cohomology. In particular, this applies to the map
    given by left or right
  multiplication by any positive-degree polynomial in $R$ on a Soergel
  bimodule $B$.
\end{lem}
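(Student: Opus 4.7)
The plan is to prove the two assertions in sequence, since each amounts to a direct unwinding of definitions. For the first assertion, recall $H^i(B) = (\tau_{\le i}B / \tau_{<i}B)(i)$, so the induced map on $H^i$ sends, up to the shift $(i)$, the class of an element $b \in \tau_{\le i}B$ to the class of $f(b)$ inside the quotient $\tau_{\le i}(B'(m))/\tau_{<i}(B'(m))$. The hypothesis $f(\tau_{\le i}B) \subset \tau_{\le i-1}(B'(m)) = \tau_{<i}(B'(m))$ says precisely that $f(b)$ already lies in the subobject we are quotienting out, so its class in the quotient is zero. Hence $f$ induces the zero map on $H^i(B)$ for every $i$.

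For the ``in particular'' statement, I would verify the hypothesis when $f$ is left or right multiplication by some $r \in R$ of positive degree $m$. Since $R$ is commutative, both left- and right-multiplication by $r$ are $R$-bimodule morphisms $B \to B(m)$. The perverse filtration consists of sub-bimodules, so each $\tau_{\le i}B$ is closed under the bimodule action of $R$; in other words, as subsets of $B$, $r \cdot \tau_{\le i}B \subset \tau_{\le i}B$. Combining this with the shift formula \eqref{eq:pshift}, the desired inclusion $r \cdot \tau_{\le i}B \subset \tau_{\le i-1}(B(m)) = (\tau_{\le i-1+m}B)(m)$ reduces, at the level of underlying subsets of $B$, to $\tau_{\le i}B \subset \tau_{\le i-1+m}B$, which is immediate from $m \ge 1$.

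The only real subtlety is the bookkeeping of grading shifts: the evident inclusion $r \cdot \tau_{\le i}B \subset \tau_{\le i}B$ looks like it only produces the trivial statement $r \cdot \tau_{\le i}B \subset \tau_{\le i}(B(m))$, and it is only after applying \eqref{eq:pshift} to both sides and comparing the resulting indices that the extra drop in perverse degree made available by the assumption $m \ge 1$ becomes visible. Beyond this, I do not expect any genuine obstacle.
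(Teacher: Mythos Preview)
Your proposal is correct and follows essentially the same route as the paper's proof. The paper is even more terse: it declares that only the second sentence requires proof, and then argues exactly as you do that the perverse filtration consists of sub-$R$-bimodules, so multiplication by $r$ of degree $d>0$ sends $\tau_{\le i}B$ into $(\tau_{\le i}B)(d)=\tau_{\le i-d}(B(d))\subset \tau_{\le i-1}(B(d))$.
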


\begin{proof}
  Only the second sentence requires proof. The perverse filtration is a
  filtration by $R$-bimodules. If $r \in R$ is homogenous of
  degree $d > 0$ then multiplication by $r$ on the left (resp. right)
  induces a map (see \eqref{eq:pshift}) 
\[
\tau_{\le i}B \to (\tau_{\le i}B)(d) =(\tau_{ \le i-d}(B))(d).
\]
Therefore, the hypothesis of the lemma applies to multiplication by $r$.
\end{proof}

\subsection{Polarizations of Soergel bimodules}
In \cite[\S 3.4, see also Corollary 3.9]{EWHodge}, the Bott-Samelson
bimodule $BS(\un{w})$ was equipped with a non-degenerate form called
the \emph{intersection form}. By restriction, one obtains a form on
any summand of a Bott-Samelson bimodule. By \cite[Lemma 3.7]{EWHodge},
there is, up to an invertible scalar, a unique non-zero form on an indecomposable Soergel bimodule
$B_x$ (this statement is equivalent to Soergel's
  conjecture), and it is non-degenerate. Thus,
letting $\un{x}$ be any reduced expression for $x$, the restriction of
the intersection form to $B_x \sumset BS(\un{x})$ is non-zero, hence
is non-degenerate and hence is a polarization of $B_x$.
 For all $x \in W$ we fix a reduced expression $\un{x}$ of $x$ and an
 embedding $B_x \subset BS(\un{x})$, and hence
a polarization $\langle -, -\rangle_{B_x}$ on $B_x$. We refer to
$\langle -, -\rangle$ as the \emph{intersection form} on $B_x$.
The intersection form has the following important positivity property:

\begin{lem}[\cite{EWHodge}, Lemma 3.10] If 
 $\rho \in \hg^*$ dominant
 regular (i.e. $\langle \rho, \alpha_s^\vee \rangle > 0$ for all $s
 \in S$) and $b \in B_x$ is any non-zero element of degree $-\ell(x)$
 then
\[
\langle b, \rho^{\ell(x)} b \rangle > 0. \footnote{The
    reader should not forget that $\langle b, b' \rangle$ is, in
    general, an element of the ring $R$. Here, for degree reasons, one
    obtains a degree zero element of $R$, hence an element of $\RM$.}
\]
\end{lem}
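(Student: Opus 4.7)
The plan is to reduce to the ambient Bott--Samelson bimodule and evaluate an iterated Demazure operator. Fix a reduced expression $\un{x} = (s_1, \ldots, s_k)$ of $x$ with $k = \ell(x)$. The lowest-degree part of $BS(\un{x})$, concentrated in degree $-k$, is one-dimensional and spanned by $c_{\top} := 1 \otimes 1 \otimes \cdots \otimes 1$; consequently the inclusion $B_x \hookrightarrow BS(\un{x})$ restricts to an isomorphism in degree $-k$. Hence $b = \lambda\, c_{\top}$ for some $\lambda \in \RM^{\ast}$, and since the intersection form on $B_x$ is by construction the restriction of that on $BS(\un{x})$, the lemma reduces to proving $\langle c_{\top}, \rho^k c_{\top} \rangle_{BS(\un{x})} > 0$.

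To evaluate this pairing I would unwind it recursively. By the defining construction of the intersection form (\cite[\S 3.4]{EWHodge}), the forms on $BS(\un{w})$ and $BS(\un{w}, s) = BS(\un{w}) B_s$ are related by $\langle a \otimes 1, b \otimes 1 \rangle_{BS(\un{w}, s)} = \partial_s \langle a, b \rangle_{BS(\un{w})}$, where $\partial_s$ is the Demazure operator. Writing $BS(\un{x}) = BS(\un{x}') \otimes B_{s_k}$ with $\un{x}' = (s_1, \ldots, s_{k-1})$ and $c_{\top} = c'_{\top} \otimes 1$, and using that left multiplication by $\rho$ affects only the leftmost factor (so $\rho^k c_{\top} = (\rho^k c'_{\top}) \otimes 1$), iterating this recursion down to $BS(\emptyset) = R$ (where the form is ordinary multiplication) yields
\[
\langle c_{\top}, \rho^k c_{\top} \rangle_{BS(\un{x})} = \partial_{s_k} \partial_{s_{k-1}} \cdots \partial_{s_1}(\rho^k).
\]

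It remains to show that this iterated BGG--Demazure expression is strictly positive whenever $\rho$ is dominant regular. I would argue by induction on $k$, using the twisted Leibniz rule $\partial_s(fg) = \partial_s(f)\, g + s(f)\, \partial_s(g)$ applied to $\rho^k = \rho \cdot \rho^{k-1}$, in order to express the result as a polynomial with non-negative integer coefficients in the scalars $\langle \rho, \alpha^{\vee} \rangle$ for positive roots $\alpha$; each such scalar is strictly positive by dominance of $\rho$. The main obstacle is precisely this final positivity step: although transparent in small examples (in type $A_2$ for $x = s_1 s_2$ one computes $\partial_{s_2}\partial_{s_1}(\rho^2) = \langle \rho, \alpha_{s_1}^{\vee} \rangle \bigl(\langle \rho, \alpha_{s_1}^{\vee} \rangle + 2\langle \rho, \alpha_{s_2}^{\vee} \rangle\bigr)$), a clean algebraic proof of the positivity of $\partial_x(\rho^{\ell(x)})$ in general requires careful combinatorial bookkeeping, and is essentially equivalent to the positivity of the leading Schubert-polynomial coefficient.
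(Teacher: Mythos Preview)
The present paper does not prove this lemma; it is quoted verbatim from \cite[Lemma~3.10]{EWHodge}.  Your reduction to the ambient Bott--Samelson and the identity
\[
\langle c_{\top},\,\rho^{k}c_{\top}\rangle_{BS(\un{x})}
\;=\;
\partial_{s_k}\partial_{s_{k-1}}\cdots\partial_{s_1}(\rho^{k})
\]
is correct and is essentially the computation carried out in \cite{EWHodge}.

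Where your argument has a genuine gap is the final positivity step, but you are too pessimistic about its difficulty: the twisted Leibniz rule you invoke closes it cleanly, with no Schubert--polynomial input.  Rewritten as the operator identity
\[
\partial_{s}\circ m_{\rho}\;=\;\langle\rho,\alpha_{s}^{\vee}\rangle\cdot\id\;+\;m_{s(\rho)}\circ\partial_{s}
\]
and iterated, one obtains
\[
\partial_{s_k}\!\cdots\partial_{s_1}\circ m_{\rho}
\;=\;
\sum_{i=1}^{k}\bigl\langle\rho,\,s_{1}\cdots s_{i-1}(\alpha_{s_i}^{\vee})\bigr\rangle\,
\partial_{s_k}\!\cdots\widehat{\partial_{s_i}}\!\cdots\partial_{s_1}
\;+\;m_{s_k\cdots s_1(\rho)}\,\partial_{s_k}\!\cdots\partial_{s_1}.
\]
Applying this to $\rho^{k-1}$, the last term vanishes for degree reasons, so
\[
\partial_{s_k}\!\cdots\partial_{s_1}(\rho^{k})
\;=\;
\sum_{i=1}^{k}\bigl\langle\rho,\,s_{1}\cdots s_{i-1}(\alpha_{s_i}^{\vee})\bigr\rangle\,
\partial_{s_k}\!\cdots\widehat{\partial_{s_i}}\!\cdots\partial_{s_1}(\rho^{k-1}).
\]
Since $\un{x}$ is reduced, each $s_{1}\cdots s_{i-1}(\alpha_{s_i}^{\vee})$ is a positive coroot (in the symmetric realisation of the paper the coefficients of $w\alpha_s^{\vee}$ in the $\alpha_t^{\vee}$ coincide with those of $w\alpha_s$ in the $\alpha_t$), so every scalar coefficient is strictly positive.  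Each remaining Demazure string is either zero (if the deleted word is not reduced) or, by induction on $k$, strictly positive.  Deleting the last letter always leaves a reduced prefix, so at least one summand is strictly positive, and the induction closes.
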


\begin{remark} \label{rmk:posscalar} This lemma and the
    discussion of the previous paragraph implies that the intersection
    form on $B_x$ does not depend on the choice of reduced expression
    $\un{x}$ or the choice of embedding $B_x \subset BS(\un{x})$, up
    to multiplication by a positive scalar. \end{remark}





Given any polarized Soergel bimodule $B$, it is explained in \cite[\S 3.6]{EWHodge} how to produce a polarization on $BB_s$, called the \emph{induced form}. Moreover, if $B = B_x$ is
given its intersection form (i.e. the form restricted from our fixed inclusion $B_x \sumset BS(\un{x})$ for a reduced expression) then the induced form on $B B_s$ agrees with the form
restricted from the inclusion $B_x B_s \sumset BS(\un{x} s)$. This is because the intersection form on any Bott-Samelson bimodule $BS(\un{w})$ is constructed by being repeatedly induced from the canonical form on $BS(\emptyset) = R$. Let us generalize this notion of induced forms.

If $B$ and $B'$ are two polarized Soergel bimodules, we define a form
on $B B'$ by the formula
\begin{equation} \label{eq:inducedform} \langle b \ot b', c \ot c'
  \rangle_{B B'} := \langle (\langle b,c \rangle_B) \cdot b', c'
  \rangle_{B'}= \langle b', (\langle b,c \rangle_B) \cdot c' \rangle_{B'}. \end{equation}
It is an exercise to confirm that the induced form on $B B_s$ is
defined precisely in this fashion.

\begin{lem}[\cite{WL}, \S 6.4]  The induced form on $B B'$ is
  non-degenerate, and thus is a polarization of $B
  B'$.
\end{lem}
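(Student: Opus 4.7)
The plan is to factor the bimodule map $\phi_{BB'}: BB' \to \DM(BB')$ induced by the form $\langle -, -\rangle_{BB'}$ as a composition of two isomorphisms; since non-degeneracy is precisely the statement that $\phi_{BB'}$ is an isomorphism, this suffices. The polarizations on $B$ and $B'$ give $R$-bimodule isomorphisms $\phi_B: B \simto \DM B$ and $\phi_{B'}: B' \simto \DM B'$ by the hypothesis of non-degeneracy, hence an isomorphism $\phi_B \otimes \phi_{B'}: BB' \simto \DM B \otimes_R \DM B'$. It remains to exhibit a natural isomorphism $\Psi: \DM B \otimes_R \DM B' \simto \DM(BB')$ with $\phi_{BB'} = \Psi \circ (\phi_B \otimes \phi_{B'})$.

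I define the candidate ``tensor of duals to dual of tensor'' map
\[
\Psi: \DM B \otimes_R \DM B' \to \DM(BB'), \qquad \Psi(f \otimes g)(b \otimes b') := g(f(b) \cdot b').
\]
A routine check (using right-$R$-linearity of $f$ and commutativity of $R$) shows $\Psi$ is well-defined on the tensor product over $R$ and is an $R$-bimodule map, where the left $R$-action on $\DM B$ is the ``internal'' one $(r \cdot f)(b) = f(rb)$ specified by the convention $r_1 f r_2(b) = f(r_1 b r_2)$. The factorization $\phi_{BB'} = \Psi \circ (\phi_B \otimes \phi_{B'})$ is then an immediate unpacking of \eqref{eq:inducedform} together with the pairing axiom $\langle b', rc' \rangle_{B'} = \langle rb', c'\rangle_{B'}$.

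The main content, and main obstacle, is showing $\Psi$ is itself an isomorphism. Here I invoke Soergel's freeness theorem \cite[Theorem 5.15]{Soe3}: every Soergel bimodule is finitely generated free as a graded right $R$-module. Choosing a homogeneous right $R$-basis $\{b_i\}$ of $B$ with dual basis $\{b_i^*\} \subset \DM B$, one has compatible decompositions $BB' = \bigoplus_i b_i \otimes B'$, $\DM B \otimes_R \DM B' = \bigoplus_i b_i^* \otimes \DM B'$ (where the tensor relations $b_i^* r \otimes g = b_i^* \otimes rg$ identify each summand with $\DM B'$ via the internal left action), and $\DM(BB') = \bigoplus_i \DM B'$; under these identifications $\Psi$ sends $b_i^* \otimes g$ to the functional vanishing on $b_j \otimes B'$ for $j \ne i$ and equal to $g$ on $b_i \otimes B'$, so it is the evident componentwise identity, hence an iso. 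The bookkeeping --- verifying that these natural identifications are intertwined by $\Psi$ despite the interaction between the two $R$-actions on $\DM B$ --- is the only real subtlety; once it is settled, non-degeneracy of the induced form on $BB'$ is immediate.
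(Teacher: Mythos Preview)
Your argument is correct. The paper does not actually supply a proof of this lemma; it merely cites \cite[\S 6.4]{WL}, so there is no ``paper's own proof'' to compare against here. Your approach---factoring $\phi_{BB'}$ as $(\phi_B \otimes \phi_{B'})$ followed by the natural ``dual of tensor'' map $\Psi$, and then using right-$R$-freeness of $B$ to see that $\Psi$ is an isomorphism---is the standard clean route and goes through as written.

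One minor bibliographic point: the reference \cite[Theorem~5.15]{Soe3} that the paper cites (and that you invoke) is about freeness of \emph{Hom spaces} between Soergel bimodules, not freeness of the bimodules themselves. What you actually need is the more elementary fact that every Soergel bimodule is graded free as a right $R$-module. This is immediate: each $B_s = R \otimes_{R^s} R(1)$ is free of rank $2$ on either side, so Bott--Samelson bimodules are free, and their graded direct summands over a polynomial ring are again free. You might cite this directly rather than routing through the Hom-space statement.
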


By iteration, we have an induced form on any tensor product of the form $B_{x_1} B_{x_2} \cdots B_{x_m}$, which we continue to call the \emph{intersection form}. One could also view
$B_{x_1} \cdots B_{x_m}$ as a summand (via the tensor products of our
fixed embeddings) of a Bott-Samelson bimodule $BS(\un{w})$, where
$\un{w}$ is a concatenation of our chosen reduced expression for each
$x_i$. The induced form agrees with the restriction of the
intersection form on $BS(\un{w})$ to this summand.
All tensor products of the form $B_{x_1} B_{x_2} \dots B_{x_m}$ are
always assumed to be polarized with respect to their intersection
form.

Let $(B, \langle -, -\rangle_B)$ be a polarized Soergel bimodule. If
$B$ is also perverse then by considering the the isotypic
decomposition (see \eqref{eq:pervdecomp})
\[ B =
\bigoplus_{x \in W} V_x \otimes_{\RM} B_x
\]
and the associated map $B
\to \DM B$, we see that $\langle -, -\rangle_B$ is orthogonal for this
decomposition. Moreover, $\langle -, -\rangle_B$ is determined by
symmetric forms $\langle - , -\rangle_{V_x}$ on each vector space
$V_x$ (i.e. if $v, v' \in V_x$ and $b, b' \in B_x$ then $\langle v
\otimes b, v' \otimes b' \rangle = \langle v, v' \rangle_{V_x} \langle
b, b' \rangle_{B_x}$). We say that $B$ is \emph{positively
  polarized} if $B = 0$ or the following conditions are satified:
\begin{enumerate}
\item $B$ is perverse and vanishes in even or odd degree (because
  $B_x$ is non-zero in degree $-\ell(x)$, the second
  condition is equivalent to the existence of $q \in \{0, 1\}$ such that $V_x =
  0$ for all $x$ with $\ell(x)$ of the same parity as $q$);
\item Let $z \in W$ denote the element of maximal length in $W$ such
  that $V_z \ne 0$.  If $V_y \ne 0$ then $\langle
  -, -\rangle_{V_y}$ is $(-1)^{(\ell(z) - \ell(y))/2}$ times a
  positive definite form, for all $y \in W$.
\end{enumerate}

The canonical example of a positively polarized Soergel bimodule is
given by the following lemma:

\begin{lem}[\cite{EWHodge}, Proposition 6.12] \label{ys}
  Suppose that $y \in W$ and $s \in S$ with $ys > y$ (resp. $sy > y$). Then 
  $B_yB_s$ (resp. $B_sB_y$), equipped with its interesection form, is
  positively polarized.
\end{lem}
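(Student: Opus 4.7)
The plan is to combine Soergel's conjecture, which pins down the summand structure of $B_y B_s$, with Hodge--Riemann type positivity for $B_y$ to read off the signs of the form on each multiplicity space. I treat $ys > y$; the case $sy > y$ is entirely symmetric.

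Step 1 (decomposition). The Kazhdan--Lusztig product rule reads
\[
\un{H}_y \un{H}_s = \un{H}_{ys} + \sum_{z < y, \, zs < z} \mu(z,y) \un{H}_z,
\]
with $\mu(z,y) \in \ZM_{\ge 0}$. Soergel's conjecture ($\ch([B_x]) = \un{H}_x$) and matching isotypic pieces yield a (non-canonical) decomposition
\[
B_y B_s \;\cong\; B_{ys} \oplus \bigoplus_{z < y, \, zs < z} V_z \otimes_\RM B_z,
\]
with $\dim V_z = \mu(z,y)$ and no grading shifts appearing. Parity vanishing of Kazhdan--Lusztig polynomials forces $\ell(y) - \ell(z)$ to be odd whenever $\mu(z,y) \ne 0$, so every $z$ appearing satisfies $\ell(z) \equiv \ell(ys) \pmod 2$. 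Thus $B_y B_s$ is perverse, supported in a single parity of degrees, and $ys$ is length-maximal among the $w$ with $V_w \ne 0$: condition (1) of positive polarization is satisfied.

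Step 2 (top summand). Pick $c \in B_y$ of minimal degree $-\ell(y)$; then $c \otimes 1 \in B_y B_s$ is a non-zero element of degree $-\ell(ys)$ generating the $B_{ys}$-isotypic piece. Unwinding the definition \eqref{eq:inducedform} of the induced form and invoking the positivity lemma above (applied to $B_y$), one computes $\langle c \otimes 1, \rho^{\ell(ys)}(c \otimes 1) \rangle_{B_y B_s} > 0$, hence the form on $V_{ys} \cong \RM$ is positive definite, matching $(-1)^{(\ell(ys)-\ell(ys))/2} = +1$.

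Step 3 (lower summands), the main obstacle. For each $z < y$ with $zs < z$ one must show that the form on $V_z$ is $(-1)^{(\ell(ys)-\ell(z))/2}$ times a positive definite form. The strategy is to pair the inclusion $B_y \to B_y B_s$, $b \mapsto b \otimes 1$, with the projection onto the $B_z$-isotypic component, obtaining a map $B_y \to B_z$ whose structure is captured by the diagrammatic generators (dots and trivalent vertices) associated to the simple reflection $s$. Using the Hodge--Riemann bilinear relations for $B_y$, which in the inductive scheme of \cite{EWHodge} are established at a strictly earlier stage, one decomposes $B_y$ into Lefschetz-primitive subspaces for the action of $\rho$, each carrying a canonically signed form. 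Up to a positive scalar, the form on $V_z$ is then identified with the Hodge--Riemann form on a specific primitive subspace of $B_y$, and tracking the sign yields $(-1)^{(\ell(ys)-\ell(z))/2}$. The obstacle is genuine: positive polarization of $B_y B_s$ cannot be cleanly separated from hard Lefschetz and Hodge--Riemann for $B_y$, and in \cite{EWHodge} all three properties are proved by one simultaneous induction on $\ell(y)$.
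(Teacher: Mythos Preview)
The paper does not prove this lemma: it is quoted verbatim as \cite[Proposition~6.12]{EWHodge} and used as a black box. So there is no ``paper's own proof'' to compare against; what you have written is a sketch of the argument from \cite{EWHodge}, and your final paragraph correctly identifies that positive polarization of $B_yB_s$ is inseparable from hard Lefschetz and Hodge--Riemann for $B_y$ in the simultaneous induction there.

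That said, your Step~3 contains a concrete slip worth flagging. The map you propose, $b \mapsto b \otimes 1$ followed by projection to the $B_z$-isotypic component, is zero: the element $1\otimes 1 \in B_s$ has degree $-1$, so $b \mapsto b\otimes 1$ is a morphism $B_y \to (B_yB_s)(-1)$, and composing with the (degree~$0$) projection to $B_z$ gives a map in $\Hom(B_y,B_z(-1))$, which vanishes by \eqref{eq:perv_vanish}. The maps that actually carry the information go the other way: one studies the degree~$+1$ morphism $B_yB_s \to B_y(1)$ (the ``dot'') and its adjoint, and relates the induced form on $V_z$ to the Lefschetz form on a suitable piece of $\overline{B_y}$ via the identity $d^*d = (\text{Lefschetz operator}) + (\text{terms vanishing on perverse cohomology})$. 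This is exactly the mechanism exploited throughout \cite{EWHodge} and in Propositions~\ref{prop:dx}--\ref{prop:wlsetup} of the present paper. Your Step~2 has a milder imprecision: the positivity lemma should be invoked for the lowest-degree element of $BS(\un{y}s)$ (equivalently of $B_{ys}$), not for $B_y$; the direct computation you suggest via the induced form requires more than just $\langle c,\rho^{\ell(y)}c\rangle_{B_y}>0$.

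In summary: your high-level plan (Soergel's conjecture for the decomposition and parity, Hodge--Riemann for $B_y$ for the signs, all inside the \cite{EWHodge} induction) is the right one and matches the cited source, but the specific map in Step~3 does not do what you want, and Step~3 as written is an outline rather than an argument --- which you acknowledge.
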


\subsection{Forms on multiplicity spaces} 

Assume that $(B, \langle -, -\rangle)$ is a polarized Soergel
bimodule. If we interpret $\langle -, - \rangle$ instead as an isomorphism
\begin{gather*}
  f : B \simto \DM(B)
\end{gather*}
then we deduce from the functoriality of the perverse filtration that:
\begin{gather}
  f(\tau_{\le i}) \subset \tau_{\le i}(\DM B)
  \stackrel{\eqref{eq:dualcofilt}}{=} \DM( \tau_{\ge -i} B), \label{eq:s1} \\
  f \text{ induces an isomorphism }H^i(B) \simto H^i(\DM B) = \DM H^{-i}(B). \label{eq:s2}
\end{gather}
Statement \eqref{eq:s1} is equivalent to saying that $\langle \tau_{\le i}
B, \tau_{<-i} B \rangle = 0$ and hence that $\langle -, - \rangle$
induces a pairing of Soergel bimodules
\begin{equation}
  \label{eq:ipairing}
\langle -, - \rangle : H^i(B) \times H^{-i}(B) \to R,  
\end{equation}
and \eqref{eq:s2} tells us that this pairing is non-degenerate. By
\eqref{eq:deltaxy} the canonical decompositions
\[
H^i(B) = \bigoplus_{z \in W} H^i_z(B) \otimes_{\RM} B_z \quad
\text{and} \quad
H^{-i}(B) = \bigoplus_{z \in W} H^{-i}_z(B) \otimes_{\RM} B_z
\]
are orthogonal with respect to $\langle -, -\rangle$ (i.e. $\langle \g
\otimes b, \g' \otimes b' \rangle = 0$ for $\g \otimes b \in H^i_z(B)
\otimes_\RM B_z$ and $\g' \otimes b' \in H^{-i}_{z'}(B)
\otimes_\RM B_{z'}$ if $z \ne z'$). Applying \eqref{eq:deltaxy} again
we conclude that \eqref{eq:ipairing} is completely determined by the
non-degenerate bilinear pairing on the vector spaces
\begin{equation} \label{eq:izpairing}
H^i_z(B) \times H^{-i}_z(B) \to \RM
\end{equation}
for all $z \in W$. To be precise, given $v \in H^i_z(B)$ and $v' \in H^{-i}_z(B)$, this pairing \eqref{eq:izpairing} is defined so that, for all $b,b' \in B_z$, one has
\begin{equation} \label{eq:pairingexplicit}
\langle v \ot b, v' \ot b' \rangle = \langle v,v' \rangle \langle b,b' \rangle.
\end{equation}
The left hand side is (a summand of) the pairing in
\eqref{eq:ipairing} between $H^i(B)$ and $H^{-i}(B)$, and the right
hand side is the pairing in \eqref{eq:izpairing} multiplied by the
intersection form on $B_z$.

Reassembling this data, we conclude that $\langle -, - \rangle$ descends to a symmetric
non-degenerate form
\[
\langle -, -\rangle : \gr B \times \gr B \to R.
\]
and that this form is determined by the symmetric
non-degenerate graded bilinear forms
\[
\langle -, - \rangle : H_z^\bullet(B) \times H_z^\bullet(B) \to \RM
\]
on multiplicity spaces for all $z \in W$.

Here is another important triviality:

\begin{lem} \label{lem:canonicalpairingis1} Let $B = BS(\un{x})$ be a
  Bott-Samelson bimodule associated to a reduced expression $\un{x}$
  for an element $x \in W$, polarized with respect to its intersecton
  form.  The summand $B_x$ appears with multiplicity one having
no grading shift, so that $H^\bullet_x(B) = \RM$ in degree
zero. Up to a positive scalar, the form $H^0_x(B) \times H^0_x(B) \to \RM$ is
just the standard form, with $\langle 1, 1\rangle = 1$. \end{lem}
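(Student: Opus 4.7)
The plan is to unravel the definitions. The chosen embedding $\iota : B_x \hookrightarrow B = BS(\un{x})$ is a morphism of degree $0$, so its image lies in $\tau_{\le 0} B$. Composing $\iota$ with the quotient $\tau_{\le 0} B \twoheadrightarrow H^0(B)$ and invoking the isotypic decomposition $H^0(B) = \bigoplus_z H^0_z(B) \otimes_\RM B_z$, together with the fact (from Soergel's theorem) that $B_x$ occurs in $BS(\un{x})$ with multiplicity one and no shift, identifies the canonical generator $1 \in H^0_x(B)$.

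To compute $\langle 1, 1 \rangle$, I would pick any $b, b' \in B_x$ with $\langle b, b' \rangle_{B_x} \ne 0$; such a pair exists by non-degeneracy of the intersection form on $B_x$. (For concreteness, take $b$ a non-zero element of degree $-\ell(x)$ and $b' = \rho^{\ell(x)} b$; the positivity lemma then even gives $\langle b, b' \rangle_{B_x} > 0$.) On the one hand, the induced pairing on $H^0(B)$ is computed by lifting to $B$: by orthogonality of $\tau_{\le 0}B$ and $\tau_{<0}B$ (which is how the pairing \eqref{eq:ipairing} is defined for $i = 0$), we have
\[
\langle 1 \otimes b,\; 1 \otimes b' \rangle_{H^0(B)} = \langle \iota(b),\; \iota(b') \rangle_B.
\]
On the other hand, the intersection form on $B_x$ was \emph{defined} as the restriction of the intersection form on $B$ via $\iota$, so the right-hand side equals $\langle b, b' \rangle_{B_x}$. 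Combining this with formula \eqref{eq:pairingexplicit}, which reads $\langle 1 \otimes b, 1 \otimes b' \rangle_{H^0(B)} = \langle 1, 1 \rangle \cdot \langle b, b' \rangle_{B_x}$, and cancelling the nonzero scalar $\langle b, b' \rangle_{B_x}$, yields $\langle 1, 1 \rangle = 1$.

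There is no real obstacle here: once the perverse filtration, the isotypic decomposition, the induced pairing on multiplicity spaces, and the intersection form on $B_x$ have all been set up as in Section~2, the lemma is forced. The qualifier ``up to a positive scalar'' in the statement records the freedom to rescale the embedding $\iota$ (and hence the canonical generator of $H^0_x(B)$) by a nonzero constant $c$, which multiplies $\langle 1, 1 \rangle$ by $c^2 > 0$; this is consistent with Remark \ref{rmk:posscalar}, and what is ultimately used downstream is just the positivity of $\langle 1, 1 \rangle$.
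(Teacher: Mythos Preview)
Your proof is correct and follows the same approach as the paper's: both arguments reduce to \eqref{eq:pairingexplicit} together with the fact (Remark~\ref{rmk:posscalar}) that the intersection form on $BS(\un{x})$ restricts to a positive multiple of the fixed intersection form on $B_x$. You have simply unpacked the definitions in more detail than the paper, which dispatches the lemma in one sentence. One minor point: your step~4 tacitly assumes $\iota$ is the \emph{fixed} embedding used to define $\langle -,-\rangle_{B_x}$, whereas the lemma allows an arbitrary reduced expression $\un{x}$ and embedding; but you correctly account for this in your final paragraph, and the conclusion $\langle 1,1\rangle > 0$ is unaffected.
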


\begin{proof} This follows immediately from
  \eqref{eq:pairingexplicit}, because the intersection
    form on $BS(\un{x})$ restricts to a positive multiple of the
    intersection form on $B_x$ (see Remark
    \ref{rmk:posscalar}). \end{proof}

%
\section{Relative hard Lefschetz and Hodge-Riemann}
\label{sec-relative}
%

\subsection{Statement}

We fix once and for all a dominant regular $\rho \in \hg^*$, that is, an element such that $\langle \rho, \a_s^\vee \rangle \ge 0$ for all $s \in S$.

Let $\bfx := (x_1, \dots, x_m)$ be a sequence of elements in $W$, and fix scalars $\bfa := (a_1, \dots, a_{m-1}) \in
\RM^{m-1}$. Consider the operator
\begin{gather*}
L_\bfa : B_{x_1}B_{x_2} \dots B_{x_m} \to B_{x_1}B_{x_2} \dots B_{x_m}(2) \\
L_\bfa = a_1 B_{x_1} \rho B_{x_2} \dots B_{x_m} + a_2 B_{x_1} B_{x_2} \rho \dots B_{x_m} + \dots + a_{m-1} B_{x_1} B_{x_2} \dots \rho B_{x_m}.
\end{gather*}
In words, $L_\bfa$ is the sum of the operators of multiplication by $a_i \rho$ in the gap between $B_{x_i}$ and $B_{x_{i+1}}$.

We have explained that to any $z \in W$ we may
associate a graded vector space
\begin{gather*}
V^\bullet :=  H_z^\bullet(B_{x_1}B_{x_2} \dots B_{x_m})
\end{gather*}
equipped with
\begin{enumerate}
\item a symmetric graded non-degenerate form $\langle -,
  -\rangle_{V^\bullet}$ obtained from the intersection form on $B_{x_1}
  \dots B_{x_m}$;
\item a degree two Lefschetz operator $L_{\bfa} : V^\bullet
  \to V^{\bullet + 2}$ obtained by taking perverse cohomology
  of $L_\bfa$.
\end{enumerate}

\begin{remark} \label{rem:internal} The operator $L_{\bfa}$ involves
  only internal multiplication by polynomials. One could also consider
  the Lefschetz operator $L_{\bfa} + a_0 \rho \cdot (-) + a_m (-)
  \cdot \rho$ which includes multiplication on the left and
  right. However, as observed in Lemma \ref{lem:pzero}, left and right
  multiplication by polynomials act trivially on perverse cohomology,
  so this does not affect the degree $2$ operator on $V^\bullet$. \end{remark}

We say that $L_\bfa$ \emph{satisfies relative hard Lefschetz}
  if for any $d \ge 0$, $L_\bfa$ induces an isomorphism:
\[
L_\bfa^d : V^{-d} \simto V^d.
\]
We say that $L_\bfa$ \emph{satisfies relative Hodge-Riemann} if
$L_\bfa$ satisfies relative hard Lefschetz and the restriction of the
Lefschetz form $(v, v') :=  \langle v, L_\bfa^dv' \rangle_{V^\bullet}$ on $V^{-d}$ to
\[
P^{-d} := \ker L_\bfa^{d+1} : V^{-d} \to V^{d+2}
\]
is $(-1)^{\e(\bfx, z, d)}$-definite, for all $d \ge 0$, where
\[
\e(\bfx, z, d) := \frac{1}{2} \left ( \sum_{i = 1}^m\ell(x_i) - \ell(z)
  - d \right ).
\]
Note that relative hard Lefschetz and relative Hodge-Riemann are both statements about $H_z^\bullet$ which are required to hold for all $z \in W$.

\begin{remark}
 The sign $(-1)^{\e(\bfx, z, d)}$ might appear
  mysterious. The following is a useful mnemonic. Set $B := B_{x_1}
  \dots B_{x_m}$ and consider the
  finite dimensional graded vector space
\[
\overline{B} := B\otimes_R \RM.
\]
We have a non-canonical isomomorphism
\[
\overline{B}  \cong \bigoplus_{z \in W} H_z^{\bullet}(B) \otimes \overline{B}_z.
\]
Now $\e(\bfx, z, d)$ has the following meaning: it is half the difference
between the smallest non-zero degree in $H^{-d}_z(B) \otimes_{\RM} \overline{B}_z^{-\ell(z)}$ on
the right hand side (i.e. $-\ell(z) - d$) and the smallest non-zero
degree in $\overline{B}$ (i.e. $-\sum \ell(x_i)$). In this way one may see that the above definition
is compatible with the signs predicted by Hodge theory in the geometric setting (see \cite{dCM2} and
\cite[Theorem 3.12]{WB},
where the signs are made explicit).
\end{remark}

For $x_1, \dots, x_m \in W$ as above we introduce the following
abbreviations:
\begin{gather*}
  RHL(x_1, \dots, x_m) : \begin{array}{c}
L_{\bfa} \text{ satisfies relative hard Lefschetz}  \\
\text{for all $\bfa := (a_1,
                          \dots, a_{m-1}) \in \RM_{>0}^{m-1}$}.
\end{array} \\
  RHR(x_1, \dots, x_m) : \begin{array}{c}
L_{\bfa} \text{ satisfies relative Hodge-Riemann}  \\
\text{for all $\bfa := (a_1,
                          \dots, a_{m-1}) \in \RM_{>0}^{m-1}$}.
\end{array}
\end{gather*}
As always, it is implicitly assumed in these statements that all tensor products of the
  form $B_{x_1} \dots B_{x_m}$ are 
  equipped with their intersection form.

The main theorem of this paper is:

\begin{thm} \label{thm:mainthm}
  For any $x, y \in W$, $RHR(x, y)$ holds.
\end{thm}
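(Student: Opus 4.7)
The plan is to proceed by induction on $\ell(y)$, allowing $x \in W$ to vary at each stage. The base case $\ell(y) = 0$ is immediate: $B_x B_e = B_x$ is itself perverse, and the Lefschetz operator $a_1 B_x \rho B_e$ is just right multiplication by $a_1 \rho$, which induces the zero map on perverse cohomology by Lemma \ref{lem:pzero}. Hence $H^d_z(B_x) = 0$ for $d \neq 0$, while $H^0_x(B_x) = \RM$ carries a positive form by Lemma \ref{lem:canonicalpairingis1}, so $RHR(x, e)$ holds trivially.

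For the inductive step, write $y = y's$ with $y' < y$ and $s \in S$. By Lemma \ref{ys}, $B_{y'}B_s$ is perverse and positively polarized, giving a canonical decomposition
\[
B_{y'} B_s \;\cong\; B_y \;\oplus\; \bigoplus_{z < y} M_z \otimes_{\RM} B_z,
\]
where each $M_z$ is a real graded vector space whose form is definite on each graded piece with the sign prescribed by the positive-polarization convention. Tensoring on the left with $B_x$ yields
\[
B_x B_{y'} B_s \;\cong\; B_x B_y \;\oplus\; \bigoplus_{z < y} M_z \otimes B_x B_z,
\]
and the operator $L := B_x \rho B_{y'} B_s$ respects this bimodule decomposition: it restricts to the desired $B_x \rho B_y$ on the first summand and to $B_x \rho B_z$ on each remaining summand. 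By the inductive hypothesis $RHR(x, z)$ applied to every $z < y$, the Hodge--Riemann signatures on the multiplicity spaces $H^\bullet_w(B_x B_z)$ are already controlled, so the contribution of the lower summands is completely understood.

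The crux is to isolate the piece coming from $H^\bullet_w(B_x B_y)$ and verify the signatures predicted by $\e(\bfx, w, d)$. My approach is to introduce the one-parameter family
\[
L_\varepsilon \;:=\; B_x \rho B_{y'} B_s \;+\; \varepsilon \, B_x B_{y'} \rho B_s
\]
on $B_x B_{y'} B_s$, noting that at $\varepsilon = 0$ it restricts to $B_x \rho B_y$ on the $B_x B_y$ summand. The idea is to combine $RHR(x, y')$ from the inductive hypothesis with the explicit structure of $B_s$ as a rank-two free module over $R^s$: this should let one compute the perverse filtration of $(B_x B_{y'}) B_s$ from that of $B_x B_{y'}$, push the inductive signatures across to $B_x B_{y'} B_s$, and then extract the $B_y$-component via the orthogonal decomposition afforded by Lemma \ref{ys}.

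The main obstacle is the signature propagation step at $\varepsilon = 0$. One would like to know that no degeneration of signature occurs as $\varepsilon \to 0$, but this requires an auxiliary relative hard Lefschetz statement for $B_x \rho B_{y'} B_s$ alone (the $\varepsilon = 0$ operator on the 3-tensor), which is not directly given by the inductive hypothesis. It must be bootstrapped from $RHR(x, y')$ together with the positivity of Lemma \ref{ys}, in the spirit of the one-color Hodge-theoretic arguments of \cite{EWHodge, WL}. Note that one cannot simply invoke the full relative hard Lefschetz for the 3-tensor $B_x B_{y'} B_s$ with both $a_1, a_2 > 0$, since this is essentially Conjecture \ref{conj:HRgeneral} for $m = 3$ and remains open; the delicate point is to make do with the weaker degenerate operator and still squeeze out positivity on the $B_y$-summand.
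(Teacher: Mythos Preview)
Your proposal correctly identifies its own obstruction and does not resolve it. To run conservation of signs you need hard Lefschetz for the degenerate operator $L_0 = B_x \rho B_{y'} B_s$; since $L_0$ acts diagonally on the decomposition $B_x B_{y'} B_s \cong B_x B_y \oplus \bigoplus_{z<y} M_z \otimes B_x B_z$, this is equivalent to $RHL(x,y)$ on the top summand, which is precisely what you are trying to establish. A single induction on $\ell(y)$ gives no handle on this: the circularity is genuine. The paper breaks it with a \emph{weak Lefschetz} step (Propositions~\ref{prop:wlsetup} and~\ref{prop:weaklefschetz1}): the first differential of the Rouquier complex $F_xF_y$ supplies a map $f : B_xB_y \to E(1)$ with $f^*f = 2\,B_x\rho B_y$ modulo terms that die on perverse cohomology, where $E$ is built from pieces $B_{x'}B_y$ with $x'<x$ \emph{and} $B_xB_{y'}$ with $y'<y$. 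Thus Hodge--Riemann on $E$ (and hence $RHL(x,y)$) requires $RHR(<x,y)$ as well as $RHR(x,<y)$, forcing the double induction on $(\ell(x)+\ell(y),\ell(y))$ that the paper actually runs.

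Your final claim, that relative hard Lefschetz for the 3-tensor with both parameters positive ``is essentially Conjecture~\ref{conj:HRgeneral} for $m=3$ and remains open,'' is the decisive misconception. The paper proves $RHR(x,s,y)$ for $s\in S$ (the statements $Y_{M,N}$, established via Propositions~\ref{prop:weaklefschetz2}, \ref{prop:xsbigger}, \ref{prop:xssmaller}) as part of the same interlocked induction; only the case of an arbitrary middle factor is open. Note also that the paper places $s$ in the \emph{middle}, writing $y = s\dot{y}$ with $\dot{y} = sy < y$, rather than on the right as you do. This matters: when $xs<x$ one has $B_xB_s \cong B_x(1)\oplus B_x(-1)$, and an explicit matrix for $B_x\rho B_s$ in this splitting (Lemma~\ref{lem:operator}) makes the limit argument of Proposition~\ref{prop:xssmaller} tractable. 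Your vague hope of ``bootstrapping from $RHR(x,y')$ together with Lemma~\ref{ys}'' is exactly what Propositions~\ref{prop:xsbigger}--\ref{prop:xssmaller} accomplish, but they in turn presuppose the weak Lefschetz input, and hence the double induction.
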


\subsection{A conjecture}

\begin{conj} \label{conj:HRgeneral}
  For any $x_1, \dots, x_m \in W$, $RHR(x_1, \dots, x_m)$ holds.
\end{conj}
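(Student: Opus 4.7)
The natural approach is induction on $m$, with base case $m = 2$ given by Theorem \ref{thm:mainthm}. For the inductive step, write $B := B_{x_1}\cdots B_{x_{m+1}} = B' \otimes_R B_{x_{m+1}}$ with $B' := B_{x_1}\cdots B_{x_m}$, and split $\bfa = (\bfa', a_m)$. Correspondingly
\begin{equation*}
L_\bfa \;=\; L_{\bfa'} \;+\; a_m \cdot B' \rho B_{x_{m+1}},
\end{equation*}
where $L_{\bfa'}$ is the Lefschetz for $B'$ (tensored with the identity on $B_{x_{m+1}}$) and the second summand is multiplication by $\rho$ in the gap between $B'$ and $B_{x_{m+1}}$. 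My plan is to deduce $RHR(x_1,\dots,x_{m+1})$ by combining the inductive hypothesis for $B'$ with Theorem \ref{thm:mainthm} applied, for each $y \in W$ with $H^\bullet_y(B') \ne 0$, to the $2$-fold tensor $B_y \otimes_R B_{x_{m+1}}$.

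Using the (non-canonical) isomorphism $B' \cong \bigoplus_{y,i} H^i_y(B') \otimes B_y(-i)$ together with the identity $H^j(X(-i)) = H^{j-i}(X)$, one obtains a K\"unneth-type decomposition
\begin{equation*}
H^\bullet_z(B) \;\cong\; \bigoplus_{y \in W} H^\bullet_y(B') \otimes H^\bullet_z(B_y B_{x_{m+1}})
\end{equation*}
of graded vector spaces, with total grading the sum of the two. Under this decomposition I would expect: (i) $L_\bfa$ acts as $L_{\bfa'}^{B'} \otimes 1 + 1 \otimes (a_m \rho)$ on each $y$-summand; (ii) the intersection form is block-diagonal in $y$ and tensor-factored on each block; and (iii) the signs combine correctly, since with $A := \sum_{i=1}^m \ell(x_i) - \ell(y)$ and $C := \ell(y) + \ell(x_{m+1}) - \ell(z)$ one has $(A - d_V)/2 + (C - d_W)/2 = \e(\bfx, z, d_V + d_W)$ for $\bfx = (x_1, \dots, x_{m+1})$. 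Granting (i)--(iii), the inductive hypothesis gives HR for $L_{\bfa'}^{B'}$ on $H^\bullet_y(B')$, Theorem \ref{thm:mainthm} gives HR for $\rho$ on $H^\bullet_z(B_y B_{x_{m+1}})$ (rescaling by $a_m > 0$ preserves HR), and a standard K\"unneth lemma for polarised $\sl_2$-modules then yields HR on $H^\bullet_y(B') \otimes H^\bullet_z(B_y B_{x_{m+1}})$ with the summed Lefschetz operator, completing the induction.

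The hard step will be substantiating (ii): compatibility of the intersection form with the K\"unneth decomposition of perverse cohomology. The induced form on $B = B' \otimes_R B_{x_{m+1}}$ is defined through the full bimodule structure, whereas the splitting $B' \cong \bigoplus H^i_y(B')(-i) \otimes B_y$ is only canonical after passing to $\gr B'$. By Lemma \ref{lem:pzero}, the operator $L_{\bfa'}$ acting on the non-perverse bimodule $B'$ may contain ``lower filtration'' corrections invisible on perverse cohomology; after tensoring with $B_{x_{m+1}}$ these corrections can in principle mix K\"unneth summands and deform the induced form in ways that are difficult to control. The authors' own remark that even the Bott-Samelson case of this conjecture (their original approach to Soergel's conjecture) remains open strongly suggests that a formal K\"unneth reduction along the above lines hides real difficulties, and that a successful proof may require either substantially strengthening the inductive hypothesis (to include information beyond perverse cohomology) or a deformation argument in $\bfa \in \RM_{>0}^{m}$ exploiting openness of relative HR.
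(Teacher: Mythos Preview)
The statement you are attempting to prove is a \emph{conjecture}; the paper does not give a proof, and explicitly flags it as open (even the Bott--Samelson case $x_i \in S$). So there is no paper's proof to compare against, and your proposal should be read as an attempted attack on an open problem rather than as a reconstruction.

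Your outline is the natural one, and you have honestly identified the main obstruction. Let me sharpen it. The K\"unneth isomorphism of graded vector spaces is fine: any splitting $B' \cong \gr B'$ induces $B \cong \bigoplus_{y,i} H^i_y(B') \otimes B_y B_{x_{m+1}}(-i)$, hence the claimed decomposition of $H^\bullet_z(B)$. The problem is that \emph{neither} (i) nor (ii) holds in the required form. Under any splitting of $B'$, the bimodule endomorphism $L_{\bfa'}$ acquires lower-order correction terms mapping $H^i_y(B') \otimes B_y(-i)$ into $H^j_{y'}(B') \otimes B_{y'}(-j+2)$ with $j < i+2$ (and possibly $y' \ne y$). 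On $H^\bullet(B')$ these corrections vanish by Lemma~\ref{lem:pzero}; but after tensoring with $B_{x_{m+1}}$ they become genuine degree-two maps between summands $H^i_y(B') \otimes B_y B_{x_{m+1}}$ and $H^j_{y'}(B') \otimes B_{y'} B_{x_{m+1}}$, and since $B_y B_{x_{m+1}}$ is not perverse, such maps can and do survive to $H^\bullet_z(B)$. Exactly the same phenomenon afflicts the intersection form: on $\gr B'$ the form is block-diagonal in $(y,i)$, but on $B'$ itself it is not, and the off-block pieces propagate through the induced form on $B$. So the K\"unneth picture is corrupted by cross-terms in both the operator and the pairing, and the $\sl_2$-K\"unneth lemma, which needs an orthogonal direct sum of commuting Lefschetz structures, does not apply.

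Your closing paragraph already anticipates this, and your two suggested remedies (strengthen the induction to control lower-filtration data, or run a deformation/limit argument in $\bfa$) are indeed the two obvious strategies. Neither has been made to work. The paper's proof of the $m=2$ case proceeds by an entirely different induction (on $\ell(x)+\ell(y)$ and $\ell(y)$), passing through the auxiliary three-factor statements $RHR(x,s,y)$ with $s \in S$ and using weak-Lefschetz and limit arguments specific to the situation where one factor is a single reflection; this structure does not obviously generalise to longer tensor products.
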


More generally, relative Hodge-Riemann should hold for any operator of the form
\[ B_{x_1} \rho_1 B_{x_2} \dots B_{x_m} + B_{x_2} B_{x_2} \rho_2 \dots B_{x_m} + \dots + B_{x_1} B_{x_2} \dots \rho_{m-1} B_{x_m}, \]
where $\rho_1, \dots, \rho_{m-1}$ is any sequence of dominant regular
elements. (Such elements span the cone of relatively ample classes in the Weyl group case.) For the conjecture above, one sets $\rho_i = a_i \rho$.

\subsection{Base cases}

\begin{lem} \label{lem:x} $RHL(x)$ and $RHR(x)$ hold, for any $x \in W$. \end{lem}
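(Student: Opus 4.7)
The plan is to observe that in the base case $m=1$ the statement degenerates almost completely, and essentially all that remains is the sign calculation for the intersection form on $B_x$ itself.

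First, I would note that when $m=1$ the tuple $\bfa = (a_1,\dots,a_{m-1})$ is empty, and the operator
\[
L_\bfa : B_x \to B_x(2)
\]
is by convention the empty sum, hence the zero map. Next, since $B_x$ is itself perverse and indecomposable, its perverse filtration is concentrated in a single degree and the isotypic decomposition of \eqref{eq:pervdecomp} gives
\[
H_z^{\bullet}(B_x) = \begin{cases} \RM \text{ placed in degree } 0 & \text{if } z = x, \\ 0 & \text{otherwise.} \end{cases}
\]
So for every $z \in W$ the multiplicity space $V^\bullet = H_z^\bullet(B_x)$ vanishes outside degree $0$.

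For $RHL(x)$, the map $L_\bfa^d : V^{-d} \to V^d$ is the identity on $V^0$ when $d = 0$ (which is an isomorphism), and for $d \ge 1$ both source and target vanish, so the zero map is vacuously an isomorphism. Thus $RHL(x)$ is immediate.

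For $RHR(x)$, the only potentially nontrivial situation is $z = x$ and $d = 0$; in every other case the primitive subspace $P^{-d}$ is zero. In this remaining case $P^0 = V^0 = \RM$, and the Lefschetz form $(v,v') = \langle v, L_\bfa^0 v'\rangle_{V^\bullet} = \langle v, v' \rangle_{V^\bullet}$ is exactly the form induced on the multiplicity space by the intersection form on $B_x$. By Lemma \ref{lem:canonicalpairingis1}, this form is (a positive multiple of) the standard form $\langle 1,1 \rangle = 1$, hence positive definite. On the sign side,
\[
\e((x), x, 0) = \tfrac{1}{2}\bigl(\ell(x) - \ell(x) - 0\bigr) = 0,
\]
so the prediction $(-1)^{\e((x),x,0)} = +1$ matches, completing the verification of $RHR(x)$. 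There is no real obstacle here: the entire content of the base case is the positivity of the intersection form on the one-dimensional top of $B_x$, which is exactly what Lemma \ref{lem:canonicalpairingis1} supplies.
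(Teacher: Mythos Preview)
Your proof is correct and follows essentially the same approach as the paper's: both observe that $H_z^\bullet(B_x)$ is concentrated in degree $0$ (and nonzero only for $z=x$), so $RHL(x)$ is vacuous, and then invoke Lemma~\ref{lem:canonicalpairingis1} for the positivity needed in $RHR(x)$. Your version is slightly more explicit (spelling out that $L_\bfa$ is the empty sum and checking $\e((x),x,0)=0$), but there is no substantive difference.
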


\begin{proof} The only nonvanishing $H^\bullet_z(B_x)$ occurs when $z = x$, and this multiplicity space is concentrated in degree zero. Thus $RHL(x)$ is trivial, and $RHR(x)$ is
equivalent to the statement that the form $H^0_x(B_x) \times H^0_x(B_x) \to \RM$ is positive definite, which holds by Lemma \ref{lem:canonicalpairingis1}. \end{proof}

\begin{lem} \label{lem:remove1} If $RHL(x_1, x_2, \ldots, x_m)$ holds, then so does $RHL(x_1, \ldots, x_m,\id)$ and $RHL(\id, x_1, \ldots, x_m)$. The same statement can be made for $RHR$.
\end{lem}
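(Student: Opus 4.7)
The plan is to exploit the fact that $B_\id = R$ is the monoidal unit of $\BC$. Via the canonical right-unit isomorphism
\[
\mu : B_{x_1}B_{x_2}\cdots B_{x_m} \otimes_R R \simto B_{x_1}B_{x_2}\cdots B_{x_m},
\]
the bimodule appearing in $RHL(x_1,\dots,x_m,\id)$ is identified with the bimodule appearing in $RHL(x_1,\dots,x_m)$. First I would verify that $\mu$ intertwines the intersection forms on either side: since the intersection form is built inductively from the canonical form on $BS(\emptyset) = R$ via the inducing formula \eqref{eq:inducedform}, and the induced form along a final factor of $R$ is just the form on $B_{x_1}\cdots B_{x_m}$ multiplied by the pairing $R\times R \to R$ which is multiplication, $\mu$ is an isometry. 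In particular, $\mu$ induces an isometric isomorphism of multiplicity spaces $H_z^\bullet$ for every $z \in W$.

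Next I would analyse the Lefschetz operator. For $\bfa = (a_1,\dots,a_m) \in \RM_{>0}^m$, the operator $L_\bfa$ on $B_{x_1}\cdots B_{x_m}B_\id$ decomposes as
\[
L_\bfa = L_{\bfa'} \otimes_R \id_R + a_m (B_{x_1}\cdots B_{x_m}\rho B_\id),
\]
where $\bfa' = (a_1,\dots,a_{m-1})$. Under the isomorphism $\mu$, the second summand becomes right multiplication by $a_m\rho$ on $B_{x_1}\cdots B_{x_m}$. By Lemma \ref{lem:pzero} (cf.\ Remark \ref{rem:internal}), right multiplication by any positive-degree polynomial acts as zero on perverse cohomology. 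Hence on each multiplicity space $H_z^\bullet$ the operator induced by $L_\bfa$ coincides with the one induced by $L_{\bfa'}$ on the corresponding multiplicity space for $B_{x_1}\cdots B_{x_m}$.

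Combining these two observations, $RHL(x_1,\dots,x_m)$ applied to $\bfa'$ immediately yields $RHL(x_1,\dots,x_m,\id)$ for $\bfa$. For the Hodge--Riemann statement, the additional observation is that the sign $(-1)^{\e(\bfx,z,d)}$ is unchanged when $\id$ is inserted into $\bfx$, since $\ell(\id) = 0$ leaves $\sum_i \ell(x_i)$ invariant; thus signature definiteness transfers verbatim, giving $RHR(x_1,\dots,x_m,\id)$. The argument for prepending $\id$ is symmetric, using the left-unit isomorphism $R \otimes_R B_{x_1}\cdots B_{x_m} \simto B_{x_1}\cdots B_{x_m}$ and the fact that left multiplication by $\rho$ likewise acts as zero on perverse cohomology by Lemma \ref{lem:pzero}.

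There is no real obstacle here: the content is entirely bookkeeping about the monoidal unit, and the only thing to be slightly careful about is the claim that $\mu$ is an \emph{isometry} (and not merely an isomorphism) so that the perverse multiplicity forms match on the nose; this follows from the recursive construction of intersection forms via \eqref{eq:inducedform}, together with the fact that the canonical form on $R$ makes right multiplication by $\rho$ in the final gap numerically equivalent to genuine right multiplication by $\rho$ on $B_{x_1}\cdots B_{x_m}$.
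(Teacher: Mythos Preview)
Your proposal is correct and follows essentially the same approach as the paper: identify $B_{x_1}\cdots B_{x_m}B_\id$ with $B_{x_1}\cdots B_{x_m}$ via the unit isomorphism, observe that the extra Lefschetz term becomes right (or left) multiplication by $a_m\rho$, and invoke Lemma~\ref{lem:pzero} to conclude that it acts trivially on perverse cohomology. Your treatment is in fact slightly more careful than the paper's, which does not explicitly spell out the isometry check or the invariance of the sign $\e(\bfx,z,d)$ needed for the $RHR$ statement.
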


\begin{proof} Let us compare $RHL(x_1, x_2, \ldots, x_m)$ and $RHL(x_1, \ldots, x_m,\id)$. Because $B = B_{x_1} \cdots B_{x_m} = B_{x_1} \cdots B_{x_m} B_\id$, the multiplicity spaces
$H^\bullet_z(B)$ being studied are the same. The operator $L_{\bfa}$ on $B$ is different, because in the latter case, one is also permitted to multiply by $a_m \rho$ in the slot before
the final $B_1$. However, this is equal to right multiplication by $a_m \rho$, which acts trivially on perverse cohomology. See Lemma \ref{lem:pzero} and Remark \ref{rem:internal}. Thus
the Lefschetz operators on $H^\bullet_z(B)$ are the same. \end{proof}

To warm up, we consider the first interesting case: $RHR(x,s)$, for $s \in
S$. This
splits into two subcases: $xs<x$ and $xs>x$. Suppose that
$xs>x$. Then $B_x B_s$ is perverse, and so each $H^\bullet_z(B_x B_s)$ is
concentrated in degree $0$ and $RHL(x,s)$ 
holds automatically. In this case $RHR(x,s)$ is equivalent to Lemma \ref{ys}.

Suppose now that $xs < x$. Then $B_x B_s \cong B_x(+1) \oplus B_x(-1)$. The action of $B_x \rho B_s$ on the multiplicity spaces $H_x^\bullet(B_x B_s)$ is independent of $x$ (see Lemma
\ref{lem:operator} below), and can be computed when $x = s$, where it is a simple exercise. (We have been brief here because this computation, expanded upon and in further generality,
comprises the bulk of \S \ref{subsec-signs}.)

\subsection{Structure of the proof}

Let us outline the major steps in the proof of Theorem
\ref{thm:mainthm}, which will be carried out in the rest of this
paper. The proof is by induction on $\ell(x) + \ell(y)$ and then on
$\ell(y)$. More precisely, for integers $M$ and $N$, consider the statements:
\begin{align*}
  X_{M,N} &: \begin{array}{c} \text{$RHR(x',y')$ holds whenever either } \\ \text{(1) $\ell(x') + \ell(y') < M$,} \\
\text{(2) $\ell(x') + \ell(y') = M$ and $\ell(y') \le N$}. \end{array} \\
  Y_{M,N} &: \begin{array}{c} \text{$RHR(x', s, y')$ holds, for all $s \in S$, whenever either} \\ \text{(1) $\ell(x') + \ell(y') + 1 < M$,} \\
\text{(2) $\ell(x') + \ell(y') + 1 =  M$ and $\ell(y') \le N$.}
\end{array}
\end{align*}
(So $M$ always bounds the length of the sequence, and
  $N$ bounds the length of the final factor.)

Certain implications are obvious. For example, $X_{M,M}$ implies
$X_{M,N}$ for all $N \ge M$. Similarly, $X_{M,0}$ is equivalent
to $X_{M-1,N}$ for all $N$, because the only element with $\ell(y') = 0
$ is $y' = \id$ (see Lemmas \ref{lem:x} and
\ref{lem:remove1}). Similar statements hold for $Y_{M,N}$.

So let us fix $M > N \ge 1$. Our goal is to show that $X_{M,N}$ and
$Y_{M,N-1}$ together imply $X_{M,N+1}$ and $Y_{M,N}$.

Let $x, y \in W$ be such that $\ell(x) + \ell(y) = M$ and
$\ell(y) = N+1$. By a weak Lefschetz style argument (Proposition \ref{prop:weaklefschetz1}):
\begin{align} \label{eq:indwl}
  RHR(<x, y) + RHR(x,<y) & \Rightarrow RHL(x,y).
\end{align}


Let us fix $s \in S$ with $sy < y$ and set $\dot{y} := sy$. Again weak Lefschetz style arguments yield (Proposition \ref{prop:weaklefschetz2}):
\begin{align}\label{eq:indwl2}
  RHR(<x,s,\dot{y}) + RHR(x,s,<\dot{y}) & \Rightarrow RHL(x,s,\dot{y})
\end{align}

We now distinguish two cases. If $xs > x$ then an easy limit argument (Proposition \ref{prop:xsbigger}) gives:
\begin{align}\label{eq:ind3}
  RHR(\le xs, \dot{y}) + RHL(x,s,\dot{y}) & \Rightarrow RHR(x,s,\dot{y}).
\end{align}

If $xs < x$ then a more complicated limit argument (Proposition \ref{prop:xssmaller}) allows us to reach essentially the same conclusion:
\begin{align}\label{eq:ind4}
  RHR(x, \dot{y}) + RHL(x,s,\dot{y}) & \Rightarrow RHR(x,s,\dot{y}).
\end{align}

Another limit argument (Proposition \ref{prop:xy}) yields:
\begin{align}\label{eq:ind5}
  RHR(x, s, \dot{y}) + RHL(x,\le y) & \Rightarrow RHR(x,y).
\end{align}
Thus assuming $X_{M,N}$ and $Y_{M,N-1}$ we have concluded that $X_{M,N+1}$ holds.

Finally, if $x, y \in W$ and $t \in S$ is such that $\ell(x) + \ell(y)
+ 1 = M$ and $\ell(y) = N$ then as in \eqref{eq:indwl2} we deduce:
\begin{align}\label{eq:indwlsim}
  RHR(<x,t,y) + RHR(x,t,<y) & \Rightarrow RHL(x,t, y).
\end{align}

If $xt < x$ then we have
\begin{align}\label{eq:ind6}
  RHR(x,y) + RHL(x,t,y) & \Rightarrow RHR(x,t, y).
\end{align}
If $xt > x$ then we have
\begin{align*} \label{eq:ind7}
   RHR(\le xt, y) +  RHL(x,t, y) & \Rightarrow HR(x,t,y)
\end{align*}
Thus assuming $X_{M,N}$ and $Y_{M,N-1}$ we have deduced that $Y_{M,N}$ holds.

Putting these two steps together we deduce:
\[
X_{M,N} + Y_{M,N-1} \Rightarrow X_{M,N+1} + Y_{M,N}.
\]
We conclude by induction that $X_{M,M}, Y_{M,M}$ hold for all $M$. This reduces the proof of the theorem to the propositions listed above.

%
\section{The proof}
\label{sec-proof}
%

\subsection{Hodge-Riemann implies hard Lefschetz} \label{sec:proof1}

In \cite{EWHodge} it was observed that homological algebra in the homotopy category of Soergel bimodules can be used to imitate the weak Lefschetz theorem. This is the key step to
deduce the hard Lefschetz theorem by induction. In this section we show that the same idea is useful for studying relative hard Lefschetz.

Recall that $\BC$ denotes the category of Soergel bimodules. Let
\[
K := K^b(\BC)
\]
denote its bounded homotopy category. As in \cite[\S 6.1]{EWHodge} we
denote the cohomological degree of an object by an upper left index,
so as not to get confused with the grading. Thus, an object in $K$ is a
complex
\[
\dots \to {}^iF \to {}^{i+1}F \to \dots
\]
with each ${}^i F \in \BC$. We denote by $(K^{\le 0}, K^{\ge 0})$
the perverse $t$-structure on $K$ (see \cite[\S 6.3]{EWHodge}).

  \begin{lem} \label{lem:splitpea}
Let $F = (0 \to {}^0 F \stackrel{d_0}{\to} {}^1 F \to \dots)$ be a
complex supported in non-negative homological degrees, and suppose
that $F \in K^{\ge 0}$. Then the induced map  
\[ d_0 : \pH^i({}^0 F) \to \pH^i({}^1 F) \] is split injective for all $i < 0$.
  \end{lem}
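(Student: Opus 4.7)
The plan is to reduce $F$ to its minimal complex representative, where the statement becomes transparent, and to handle the non-minimal part by a direct-sum bookkeeping. First I would recall, following \cite[\S 6.3]{EWHodge}, that the perverse $t$-structure on $K = K^b(\BC)$ is determined by the following property of minimal complexes: a minimal complex $G$ lies in $K^{\ge 0}$ if and only if ${}^k G \in \BC^{\ge -k}$ for every $k$, i.e.\ each indecomposable summand $B_x(m)$ of ${}^k G$ satisfies $m \le k$. In particular ${}^0 G \in \BC^{\ge 0}$, so $\pH^i({}^0 G) = 0$ for all $i < 0$.

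Since $\BC$ is Krull--Schmidt, Gaussian elimination yields a decomposition $F \cong G \oplus C$ as chain complexes (not merely up to homotopy), where $G$ is the minimal representative of $F$ and $C = \bigoplus_j C_j$ is a direct sum of elementary contractibles $C_j = (M_j \xrightarrow{\id} M_j)$ placed in consecutive cohomological degrees $[k_j, k_j + 1]$. Because $F$ is supported in non-negative cohomological degrees, the same is true of every direct summand, so $k_j \ge 0$ for all $j$.

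Reading off the degree-$0$ and degree-$1$ terms of this decomposition: the contractibles with $k_j = 0$ contribute summands $M_j \xrightarrow{\id} M_j$ to $({}^0 F \to {}^1 F)$; those with $k_j \ge 1$ contribute nothing to ${}^0 F$; and $G$ contributes $d_0^G$ with $\pH^i({}^0 G) = 0$ for $i < 0$. Therefore, for $i < 0$, the map $\pH^i(d_0)$ reduces to the canonical inclusion
\[
\bigoplus_{k_j = 0} \pH^i(M_j) \;\hookrightarrow\; \pH^i({}^1 G) \,\oplus\, \bigoplus_{k_j = 0} \pH^i(M_j) \,\oplus\, \bigoplus_{k_j = 1} \pH^i(M_j),
\]
which admits the evident projection as a retraction, exhibiting $\pH^i(d_0)$ as split injective.

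The substantive ingredient here is the minimal-complex characterization of $K^{\ge 0}$ from \cite[\S 6.3]{EWHodge}; everything else is formal bookkeeping using the Krull--Schmidt decomposition of $F$ into minimal and contractible parts. The main obstacle, should the characterization not be directly available in the form stated, would be to verify it by hand: for this one can use the stupid filtration spectral sequence $E_1^{p,q} = \pH^q({}^p G) \Rightarrow \pH^{p+q}(G)$, together with the fact that in a minimal complex the $E_1$-differentials carry no isomorphism components between isotypic $B_x$-summands, forcing the sequence to read off $\pH^\bullet(G)$ from the individual $\pH^\bullet({}^k G)$.
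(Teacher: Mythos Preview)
Your proof is correct and follows essentially the same approach as the paper's own argument: decompose $F$ into a minimal part $G$ (the paper's $F_p$) and a contractible part $C$ (the paper's $F_c$), observe that $G \in K^{\ge 0}$ forces $\pH^i({}^0 G) = 0$ for $i<0$, and conclude that $\pH^i(d_0)$ is carried entirely by the contractible summand, where the first differential is a split injection. Your write-up is simply more explicit about the bookkeeping, breaking $C$ into elementary two-term contractibles; the paper compresses this into the single sentence ``the first differential in a contractible complex is a split injection.''
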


\begin{proof}

Because $F \in K^{\ge 0}$ then by definition we can find an isomorphism of complexes
\[
F \cong F_p \oplus F_c
\]
with $F_c$ contractible and $F_p$ such that $H^{i}({}^jF_p) = 0$ if $i < -j$. Only the summand $F_c$ contributes to $H^i({}^0 F)$ for $i < 0$, but the first differential in a contractible complex is a split injection.
\end{proof}

Given any $x \in W$ we denote by
\[
F_x = (\dots \to {}^{-1}F_x =0 \to {}^0F_x = B_x \to {}^1F_x \to \dots)
\]
a fixed choice of minimal complex for the Rouquier complex (unique up
to isomorphism), see \cite[\S 6.4]{EWHodge}.
The following lemma shows that tensor product with $F_x$ is left
$t$-exact.

\begin{lem} \label{lem:Kge}
For any $x \in W$, $(K^{\ge 0})F_x \subset K^{\ge 0}$ and $F_x(K^{\ge 0}) \subset K^{\ge 0}$.
\end{lem}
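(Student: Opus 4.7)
The plan is to reduce to the case $x = s \in S$ and then analyze the two-term Rouquier complex $F_s$ directly. For the reduction, if $\un{x} = s_1 \cdots s_\ell$ is a reduced expression for $x$, then up to isomorphism in $K$ the Rouquier complex factors as $F_x \cong F_{s_1} F_{s_2} \cdots F_{s_\ell}$. Hence for any $G \in K^{\ge 0}$ we may write $GF_x \cong (\cdots((GF_{s_1})F_{s_2})\cdots)F_{s_\ell}$, and applying the base case $\ell$ times will give $GF_x \in K^{\ge 0}$.

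For the base case $x = s$, the minimal Rouquier complex is $F_s = (B_s \xrightarrow{d} R(1))$ with $B_s$ in cohomological degree $0$, $R(1)$ in degree $1$, and $d$ the multiplication map. For any $G \in K^{\ge 0}$, the underlying graded object of $GF_s$ in cohomological degree $j$ is
\[ {}^j(GF_s) = ({}^jG) B_s \oplus ({}^{j-1}G)(1). \]
Passing to a minimal representative, I may assume every indecomposable summand of ${}^jG$ has the form $B_w(m)$ with $m \le j$. The goal is to show that after discarding further contractible direct summands, ${}^j(GF_s) \in \BC^{\ge -j}$ for all $j$.

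The only summands which can violate the bound $m \le j$ are the ``bad'' summands $B_w(j+1) \hookrightarrow B_w(j)B_s$ arising from summands $B_w(j) \subset {}^jG$ with $ws < w$ (since $B_wB_s \cong B_w(1) \oplus B_w(-1)$ in that case). I claim that each such bad summand cancels, via Gauss elimination, against the matching summand $B_w(j+1) \subset (B_w(j))(1) \subset ({}^{j}G)(1) \subset {}^{j+1}(GF_s)$. Indeed, the relevant component of the differential is $G \otimes d \colon ({}^jG)B_s \to ({}^jG)(1)$; restricted to $B_w(j)B_s = B_w(j) \otimes_{R^s} R(1)$ it is the canonical surjection onto $B_w(j) \otimes_R R(1) = B_w(j+1)$, which sends the lowest-degree generator $b \otimes 1$ to $b \neq 0$. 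By \eqref{eq:deltaxy}, its restriction to the top summand $B_w(j+1) \hookrightarrow B_w(j)B_s$ is therefore a non-zero scalar multiple of the identity, hence an isomorphism.

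Since the bad/matched pairs are pairwise disjoint across distinct indices $(w, j)$, simultaneous Gauss elimination (equivalently, passage to a minimal complex) removes every bad summand, yielding a homotopy-equivalent complex with ${}^j \in \BC^{\ge -j}$ for every $j$; hence $GF_s \in K^{\ge 0}$. The argument for $F_xG \in K^{\ge 0}$ is entirely symmetric, replacing $ws < w$ by $sw < w$. The step most deserving of care is the identification of the restriction of the canonical multiplication to the top summand as an isomorphism; this rests on \eqref{eq:deltaxy} combined with the direct verification that the lowest-degree generator has non-trivial image. Everything else is a routine application of the minimal complex formalism.
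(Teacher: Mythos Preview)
Your proof is correct and follows essentially the same approach as the paper, which reduces to $x=s$ and then cites \cite[Lemmas 6.5 and 6.6]{EWHodge}; those lemmas establish $B_wF_s \in K^{\ge 0}$ by exactly the Gauss elimination you describe and then pass to arbitrary $G$ by a filtration argument. Your version is slightly more direct in that it performs the elimination for a general minimal $G$ in one step rather than first treating the case $G = B_w$, but the content is the same.
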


\begin{proof}
Because $F_x$ is a tensor product of various $F_s$, $s \in S$, it is enough to prove the lemma for $x = s$. That $(-) \otimes F_s$ preserves $K^{\ge 0}$ is proven in \cite[Lemma 6.6]{EWHodge}; the proof deduces the general statement from \cite[Lemma 6.5]{EWHodge}, which states that $B_x F_s \in K^{\ge 0}$ for all $x \in W$ and $s \in S$. The same proof shows that $F_s B_x \in K^{\ge 0}$, and consequently that $F_s \otimes (-)$ preserves $K^{\ge 0}$. \end{proof}


The following proposition is fundamental for what follows. (In rough
form it appears first in \cite{EWHodge}  as Theorem 6.9, Lemma
6.15 and Theorem 6.21.)

\begin{prop} \label{prop:dx}
  For any $x$ there exists a map
\[
d_x : B_x \to F(1)
\]
between positively polarized Soergel bimodules such that
\begin{enumerate}
\item all summands of $F$ are isomorphic to $B_z$ with $z < x$;
\item $d_x$ is isomorphic to the first differential on a Rouquier complex;
\item if $d_x^* : F \to B_x(1)$ denotes the adjoint of $d$, then
\[
d_x^* \circ d_x = B_x \rho  - (x\rho) B_x.
\]
\end{enumerate}
\end{prop}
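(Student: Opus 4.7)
The plan is to take $d_x$ to be the first differential of a minimal representative of the Rouquier complex $F_x$. By standard facts on Rouquier complexes, ${}^0F_x \cong B_x$, which gives (2) by construction. For (1), Lemma~\ref{lem:Kge} gives $F_x \in K^{\ge 0}$, and combined with minimality this forces every summand of ${}^1F_x$ to be of the form $B_z(k)$ with $z<x$ and $k>0$; a parity/degree analysis (traced through the base case $F_s=(B_s \to R(1))$ and the tensor product structure) pins down $k=1$, so ${}^1F_x = F(1)$ with $F$ perverse and all $\ell(z)$ of one parity. The intersection form inherited from an ambient Bott--Samelson bimodule then makes $F$ positively polarized, using Lemma~\ref{ys} inductively.

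The real content is (3), which I would prove by induction on $\ell(x)$. The case $x=\id$ is trivial. For $x=s\in S$, the minimal Rouquier complex is $F_s=(B_s \xrightarrow{m} R(1))$ with $m$ the multiplication map, and a direct computation using the explicit intersection pairing on $B_s = R\otimes_{R^s}R(1)$ yields $m^*\circ m = B_s\rho - (s\rho)B_s$.

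For the inductive step, I would fix $s$ in the right descent set of $x$, write $x=x's$, and use $F_x \simeq F_{x'}\otimes F_s$. The first differential of the tensor complex, from $B_{x'}B_s$ to $({}^1F_{x'})B_s \oplus B_{x'}(1)$, has components $d_{x'}B_s$ and $\pm B_{x'}d_s$; taking adjoints and composing gives
\[
(d_{x'}^*d_{x'})\,B_s + B_{x'}\,(d_s^*d_s) = B_{x'}\rho B_s - (x'\rho)B_{x'}B_s + B_{x'}B_s\rho - B_{x'}(s\rho)B_s,
\]
where the right-hand side uses the inductive hypothesis and the base case. After passage to the minimal model and restriction to the summand $B_x \subset B_{x'}B_s$, and using the identity $x\rho = x'(s\rho)$, one expects this to collapse to $B_x\rho - (x\rho)B_x$.

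The main obstacle is this final collapse. The discrepancy $B_{x'}(\rho - s\rho)B_s + (x\rho - x'\rho)B_{x'}B_s$ does not vanish on all of $B_{x'}B_s$; using $\rho - s\rho = \langle \rho,\alpha_s^\vee\rangle\alpha_s$ and $x\rho - x'\rho = -\langle \rho,\alpha_s^\vee\rangle x'(\alpha_s)$, it becomes a specific ``internal $\alpha_s$ versus external $x'(\alpha_s)$'' correction. Controlling its restriction to $B_x$ (essentially a polynomial-forcing identity for how $B_x$ sits inside $B_{x'}B_s$), together with the Gaussian-elimination bookkeeping needed to pass from $F_{x'}\otimes F_s$ to its minimal representative, is the technical heart of the argument, and corresponds to \cite[Lemma~6.15 and Theorem~6.21]{EWHodge}.
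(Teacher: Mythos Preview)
Your inductive strategy (build $d_x$ from $d_{x'}$ by tensoring with $F_s$, then pass to the summand $B_x$) is exactly the approach the paper invokes: it cites \cite[Proposition~7.14]{WL}, whose proof ``mimics tensoring with a complex isomorphic to the Rouquier complex $F_s$ to carry out the induction,'' and notes that adding the inductive hypothesis ``$d_x$ is isomorphic to the first differential on a Rouquier complex'' goes through unchanged. So the route matches.

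There is one framing issue worth sharpening. You begin by fixing $d_x$ as \emph{the} first differential of a chosen minimal model of $F_x$, and then propose to verify (3). But the identity $d_x^*\circ d_x = B_x\rho - (x\rho)B_x$ depends both on the particular representative of $d_x$ (defined only up to automorphisms of source and target) and on the polarization of $F$, which you have not yet specified. In the cited argument one does not verify (3) after the fact; one \emph{constructs} $d_x$ together with the polarization on $F$ inductively so that (3) holds by design, and then observes (2) because the construction literally is the first differential of $F_{x'}\otimes F_s$ followed by Gaussian elimination. With that reordering, the ``collapse'' obstacle you isolate is not a separate polynomial-forcing miracle on $B_x\subset B_{x'}B_s$; it is absorbed into the bookkeeping of passing to the minimal model with its induced form (this is the content of \cite[Lemma~6.15, Theorem~6.21]{EWHodge} and \cite[Proposition~7.14]{WL} that you cite). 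Once framed this way your sketch is complete and in line with the paper.
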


\begin{proof}
 Except for part (2) this proposition is \cite[Proposition
 7.14]{WL}. However the reader may easily check that the inductive
 proof of \cite[Proposition 7.14]{WL} goes through if one adds the
 inductive assumption ``$d_x$ is isomorphic to the first differential
 on a Rouquier complex''. (Indeed, the proof mimics tensoring
 with a complex isomorphic to the Rouquier complex $F_s$ to carry out the induction.)
\end{proof}

Exchanging left and right actions gives:

\begin{prop} \label{prop:dy}
  For any $y$ there exists a map
\[
d_y : B_y \to G(1)
\]
between positively polarized Soergel bimodules such that:
\begin{enumerate}
\item all summands of $G$ are isomorphic to $B_z$ with $z < y$;
\item $d_y$ is isomorphic to the first differential on a Rouquier complex;
\item if $d_y^* : G \to B_y(1)$ denotes the adjoint of $d$, then
\[
d_y^* \circ d_y = \rho B_y   -  B_y(y^{-1}\rho)
\]
\end{enumerate}
\end{prop}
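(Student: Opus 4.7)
The plan is to deduce Proposition~\ref{prop:dy} from Proposition~\ref{prop:dx} via the natural symmetry of the Soergel category under swapping the left and right $R$-actions. For any graded $R$-bimodule $M$, let $M^{\op}$ denote the bimodule with actions interchanged, $r \cdot_{\op} m \cdot_{\op} r' := r' m r$; this defines a contravariant monoidal auto-equivalence of graded $R$-bimodules satisfying $(MN)^{\op} \cong N^{\op} M^{\op}$. Since $B_s = R \otimes_{R^s} R(1)$ is left-right symmetric, $(-)^{\op}$ fixes each $B_s$ and sends $BS(s_1, \dots, s_k)$ to $BS(s_k, \dots, s_1)$. Reversing a reduced expression for $x$ yields a reduced expression for $x^{-1}$, so by the defining property of the distinguished indecomposable summand, the involution sends $B_x$ to $B_{x^{-1}}$ and carries the Rouquier complex $F_x$ to $F_{x^{-1}}$.

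Next I would apply Proposition~\ref{prop:dx} with $y^{-1}$ in place of $x$, obtaining a map
\[
d_{y^{-1}} : B_{y^{-1}} \to F(1)
\]
whose codomain $F$ has summands $B_z$ with $z < y^{-1}$, which is isomorphic to the first differential on the Rouquier complex for $y^{-1}$, and which satisfies $d_{y^{-1}}^* \circ d_{y^{-1}} = B_{y^{-1}}\rho - (y^{-1}\rho) B_{y^{-1}}$. Setting $d_y := (d_{y^{-1}})^{\op} : B_y \to G(1)$ with $G := F^{\op}$, I would then translate each clause of Proposition~\ref{prop:dx} through the involution: Bruhat order is preserved under $z \mapsto z^{-1}$, yielding (1); the transport of Rouquier complexes yields (2); and applying $(-)^{\op}$ to the composition identity, combined with $(B_{y^{-1}}\rho)^{\op} = \rho B_y$ and $((y^{-1}\rho) B_{y^{-1}})^{\op} = B_y(y^{-1}\rho)$, yields the identity required in (3).

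The main point requiring verification is that $(-)^{\op}$ is compatible with the intersection form, so that polarizations (and hence adjoints) are transported correctly. This should follow from the iterative construction of the intersection form via \eqref{eq:inducedform}: the inductive step is symmetric in the left/right roles of $B$ and $B'$ in the appropriate sense, so the form on $BS(s_1, \dots, s_k)$ is carried by $(-)^{\op}$ to the form on $BS(s_k, \dots, s_1)$, and the two conditions in the definition of a positively polarized bimodule are manifestly preserved. Once this compatibility is in hand we have $d_y^* = (d_{y^{-1}}^*)^{\op}$ and the rest is bookkeeping; I do not expect any deeper obstacle, which is consistent with the authors presenting Proposition~\ref{prop:dy} as an immediate variant of Proposition~\ref{prop:dx}.
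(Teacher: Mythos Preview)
Your proposal is correct and is precisely the argument the paper has in mind: the paper's entire proof of Proposition~\ref{prop:dy} is the sentence ``Exchanging left and right actions gives,'' and you have simply unpacked what this means via the $(-)^{\op}$ involution, applying Proposition~\ref{prop:dx} to $y^{-1}$ and transporting everything back. Your caution about compatibility of $(-)^{\op}$ with the intersection form (and hence with adjoints and positive polarization) is appropriate---this is the only thing to check, and it follows as you say from the inductive construction of the form on Bott-Samelson bimodules.
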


Putting these three statements together gives:

\begin{prop} \label{prop:wlsetup}
Consider the map
\[
 f := \left ( \begin{matrix} d_xB_y \\ B_xd_y \end{matrix} \right)  : B_xB_y \stackrel{}{\to} E(1) := FB_y(1) \oplus B_xG(1).
\]
Here, $d_x$ and $F$ are as in Proposition \ref{prop:dx}, and $d_y$ and $G$ are as in Proposition \ref{prop:dy}.
Then
\begin{enumerate}
\item the induced map
\[
f : \pH^i(B_xB_y) \to \pH^{i+1}(E)
\]
is split injective for $i < 0$;
\item if $f^*: E \to B_xB_y(1)$ denotes the adjoint of $f$ then
\[
f^* \circ f = B_x(2\rho) B_y - x(\rho) B_xB_y - B_xB_y(y^{-1}\rho).
\]
\end{enumerate}
\end{prop}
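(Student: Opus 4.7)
The plan is to deduce part (1) by recognizing $f$ as the first differential of the tensor-product Rouquier complex $F_x \otimes F_y$ and applying Lemma \ref{lem:splitpea}, and to dispatch part (2) by a direct adjoint computation using Propositions \ref{prop:dx}(3) and \ref{prop:dy}(3).

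For part (1), the first step is to use condition (2) of Propositions \ref{prop:dx} and \ref{prop:dy}, which identify $d_x$ and $d_y$ with the initial differentials of the minimal Rouquier complexes $F_x$ and $F_y$. Since the Koszul sign in the total complex of $F_x \otimes F_y$ is trivial in bidegree $(0,0)$, the initial differential of $F_x \otimes F_y$ is precisely $f$. Thus $f$ extends to a complex $C \cong F_x \otimes F_y$ supported in non-negative cohomological degrees, with ${}^0C = B_xB_y$ and ${}^1C = E(1)$. Next I would verify $C \in K^{\geq 0}$: since $F_\id = R$ is perverse and so in $K^{\geq 0}$, two applications of Lemma \ref{lem:Kge} give first $F_y = F_\id F_y \in K^{\geq 0}$ and then $F_xF_y \in K^{\geq 0}$. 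Lemma \ref{lem:splitpea} applied to $C$ then yields the desired split injection
\[
\pH^i(B_xB_y) \injto \pH^i(E(1)) = \pH^{i+1}(E) \quad \text{for every } i < 0.
\]

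For part (2), the adjoint of the column $f$ is the row map $f^* = (d_x^*B_y \mid B_x d_y^*)$, so
\[
f^* \circ f = (d_x^* \circ d_x)B_y + B_x(d_y^* \circ d_y).
\]
Substituting $d_x^*d_x = B_x\rho - (x\rho)B_x$ from Proposition \ref{prop:dx}(3) and $d_y^*d_y = \rho B_y - B_y(y^{-1}\rho)$ from Proposition \ref{prop:dy}(3), the two ``interior'' $\rho$-multiplications add to give $B_x(2\rho)B_y$, while the two boundary terms yield $-x(\rho)B_xB_y$ and $-B_xB_y(y^{-1}\rho)$; this is exactly the claimed formula.

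I do not anticipate any serious obstacle: the preceding infrastructure has arranged matters so that each step is essentially formal. The step that merits the most care is the identification of the complex extending $f$ with $F_x \otimes F_y$, which rests on condition (2) of Propositions \ref{prop:dx} and \ref{prop:dy} and on confirming that no Koszul sign appears at the initial differential; once that is in hand, membership in $K^{\geq 0}$ and the splitting conclusion are immediate applications of Lemmas \ref{lem:Kge} and \ref{lem:splitpea} respectively.
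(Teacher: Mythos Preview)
Your proposal is correct and follows essentially the same approach as the paper: identify $f$ with the first differential of $F_xF_y$, invoke $F_xF_y \in K^{\ge 0}$ (via Lemma~\ref{lem:Kge}) and Lemma~\ref{lem:splitpea} for part (1), and compute $f^* = (d_x^*B_y \;\; B_xd_y^*)$ to obtain part (2) from Propositions~\ref{prop:dx}(3) and~\ref{prop:dy}(3). Your added remarks on Koszul signs and the explicit deduction of $F_xF_y \in K^{\ge 0}$ are slightly more detailed than the paper's one-line justifications, but the argument is the same.
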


\begin{proof}The first claim follows by
noticing that $f$ is isomorphic to the first differential on a
Rouquier complex representing
\[
F_xF_y \cong (B_x \to E(1) \to \dots)(B_y
\to F(1) \to \dots).
\]
Because $F_xF_y \in K^{\ge 0}$ the first claim in the lemma follows
from Lemma \ref{lem:splitpea}.

The adjoint of $f$ is given by the matrix
\[
\left ( \begin{matrix} d_x^*B_y & 
      B_xd_y^* \end{matrix} \right)
\]
and hence
\[
f^* \circ f = (d_x^* \circ d_x) B_y + B_x(d_y^* \circ d_y) =  B_x(2\rho) B_y - x(\rho) B_xB_y -B_xB_y(y^{-1}\rho)
\]
which is the second claim in the lemma. 
\end{proof}

Similarly we have:

\begin{prop} \label{prop:wlssetup2}
Fix $a, b >0$ and consider the map
\[
 g_{a,b} := \left ( \begin{matrix} \sqrt{a} \cdot d_xB_sB_y \\
     \sqrt{b} \cdot B_xB_sd_y \end{matrix} \right)  : B_xB_sB_y \stackrel{}{\to} E(1) := FB_sB_y(1) \oplus B_xB_sG(1).
\]
Then
\begin{enumerate}
\item the induced map
\[
g_{a,b} : \pH^i(B_xB_sB_y) \to \pH^{i+1}(E)
\]
is split injective for $i < 0$;
\item if $g_{a,b}^*: E \to B_xB_y(1)$ denotes the adjoint of $g_{a,b}$ then
\[
g_{a,b}^* \circ g_{a,b} = a B_x (\rho)B_s B_y + b
B_x B_s (\rho) B_y - a(x\rho) B_xB_xB_y - b B_xB_y(y^{-1}\rho).
\]
\end{enumerate}
\end{prop}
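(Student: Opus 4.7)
The plan is to imitate the proof of Proposition \ref{prop:wlsetup} essentially verbatim, simply inserting a factor $B_s$ in the middle of the tensor product and carrying along the positive scalars $\sqrt{a}, \sqrt{b}$.

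For part (1), I would first consider the ``unscaled'' map
\[
g := \left( \begin{matrix} d_x B_s B_y \\ B_x B_s d_y \end{matrix} \right) : B_x B_s B_y \to E(1).
\]
By parts (1) and (2) of Propositions \ref{prop:dx} and \ref{prop:dy}, this $g$ is (isomorphic to) the first differential of a minimal complex representing $F_x \otimes B_s \otimes F_y$, viewing $B_s$ as a complex concentrated in homological degree zero. Two applications of Lemma \ref{lem:Kge} (first tensoring $B_s F_y \in K^{\ge 0}$ on the left with $F_x$, or equivalently tensoring $F_x B_s \in K^{\ge 0}$ on the right with $F_y$) yield $F_x B_s F_y \in K^{\ge 0}$. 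Now $g_{a,b} = \Phi \circ g$, where $\Phi = \text{diag}(\sqrt{a}, \sqrt{b})$ is the diagonal automorphism of $E$ rescaling the two summands; since $a, b > 0$ this $\Phi$ is an honest isomorphism of Soergel bimodules, so composing with $\Phi$ and adjusting the subsequent differentials by $\Phi^{-1}$ produces an isomorphic complex, still in $K^{\ge 0}$, whose first differential is $g_{a,b}$. Lemma \ref{lem:splitpea} then gives the split injectivity of $g_{a,b}$ on $\pH^i$ for $i < 0$.

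For part (2), I would simply write down the adjoint
\[
g_{a,b}^* = \left( \begin{matrix} \sqrt{a}\, d_x^* B_s B_y & \sqrt{b}\, B_x B_s d_y^* \end{matrix} \right)
\]
and compute the matrix product, obtaining
\[
g_{a,b}^* \circ g_{a,b} = a \cdot (d_x^* \circ d_x) B_s B_y + b \cdot B_x B_s (d_y^* \circ d_y).
\]
Substituting parts (3) of Propositions \ref{prop:dx} and \ref{prop:dy} and collecting terms gives the claimed formula (noting that the ``$B_x B_x B_y$'' in the displayed statement is surely a typo for $B_x B_s B_y$).

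I do not expect any serious obstacle: the proposition is a direct generalization of Proposition \ref{prop:wlsetup} obtained by tensoring with $B_s$ in the middle. The only point that requires a moment's care is that $g_{a,b}$ is not literally the first differential of a canonical Rouquier-type complex but only isomorphic to one, and the positivity hypothesis $a, b > 0$ is precisely what allows this isomorphism to be realized by a positive diagonal rescaling $\Phi$ of $E$.
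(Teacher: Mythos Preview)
Your proposal is correct and follows essentially the same approach as the paper. The paper's proof is terser: it simply asserts that $g_{a,b}$ is the first differential on a complex representing $F_x B_s F_y$, invokes Lemma~\ref{lem:Kge} to conclude $F_x B_s F_y \in K^{\ge 0}$, and applies Lemma~\ref{lem:splitpea}; your explicit handling of the diagonal rescaling $\Phi$ makes precise a point the paper leaves implicit, and your observation about the typo is correct.
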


\begin{proof}
The argument for (2) is the same as for the previous proposition.

It remains to show part (1). Note that $g_{a,b}$ is the first
differential on a complex representing
\[
F_xB_sF_y \cong (B_x \to E(1) \to \dots)B_s(B_y
\to F(1) \to \dots).
\]
and so $F_xB_sF_y \in K^{\ge 0}$ by Lemma \ref{lem:Kge}. Now (1)
follows from Lemma \ref{lem:splitpea}.
\end{proof}

The following two propositions explain the title of this section.

\begin{prop} \label{prop:weaklefschetz1}
  Fix $x, y \in W$ and suppose $RHR(x',y)$ and $RHR(x,y')$ hold for all
  $x' < x$, $y' < y$. Then $RHL(x,y)$ holds.
\end{prop}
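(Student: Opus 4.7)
The plan is to use the weak Lefschetz technique developed in \cite{EWHodge, WL}. The key tool is Proposition~\ref{prop:wlsetup}, which provides a morphism $f : B_x B_y \to E(1)$ where $E = F B_y \oplus B_x G$; by Propositions~\ref{prop:dx} and \ref{prop:dy}, $F$ has summands $B_{x'}$ for $x' < x$ and $G$ has summands $B_{y'}$ for $y' < y$, all without grading shifts. The relevant properties of $f$ are: (i) $f$ is split injective on $\pH^i$ for $i < 0$ (Lemma~\ref{lem:splitpea}, since $F_x F_y \in K^{\ge 0}$); (ii) the identity $f^* f = B_x(2\rho)B_y - x(\rho)B_x B_y - B_x B_y \cdot y^{-1}(\rho)$, in which the two outer multiplication terms vanish on perverse cohomology by Lemma~\ref{lem:pzero}, so $f^* f = 2L$ on perverse cohomology where $L = B_x \rho B_y$; and (iii) the intertwining $f L = L_E f$ with $L_E = F\rho B_y \oplus B_x \rho G$.

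Since $L_{\bfa} = a_1 L$ with $a_1 > 0$, the map $L_\bfa^d$ is an isomorphism iff $L^d$ is, and we take $\bfa = (1)$. By the inductive hypotheses $RHR(x', y)$ and $RHR(x, y')$ (applicable since each summand of $E$ is $B_{x'} B_y$ or $B_x B_{y'}$), the operator $L_E$ satisfies relative Hodge-Riemann on $E$. In particular, for each $d \ge 1$, $L_E^{d-1} : \pH^{-(d-1)}(E) \simto \pH^{d-1}(E)$ is an isomorphism, and the Lefschetz form $\Phi_d(u, u') := \langle u, L_E^{d-1} u' \rangle_E$ is non-degenerate, with controlled signs of definiteness on each primitive summand in the Lefschetz decomposition of $\pH^{-(d-1)}(E)$.

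The factorization $L^d = \tfrac{1}{2} f^* L_E^{d-1} f$ on perverse cohomology expresses the Lefschetz form $\ell_d(v, v') := \langle v, L^d v'\rangle$ on $\pH^{-d}(B_x B_y)$ as the pullback $\ell_d = \tfrac{1}{2} f^* \Phi_d$. Working inside a fixed multiplicity space $H_z^\bullet(B_x B_y)$, the intersection form gives a non-degenerate pairing $H_z^{-d} \times H_z^d \to \RM$, reducing $RHL(x, y)$ (the statement $L^d: H_z^{-d} \simto H_z^d$) to non-degeneracy of $\ell_d$ on $H_z^{-d}$. If $v \in H_z^{-d}$ is in the radical of $\ell_d$, then $L^d v = 0$; applying $f$ and using the intertwining, $L_E^d(fv) = f(L^d v) = 0$, so $fv$ lies in the primitive subspace $P_E^{-(d-1)} := \ker L_E^d$ of $\pH^{-(d-1)}(E)$.

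The main obstacle is the final step: concluding that this forces $fv = 0$ (and hence $v = 0$ by injectivity of $f$). The form $\Phi_d$ is definite on each block of $P_E^{-(d-1)}$ in the decomposition $\pH^{-(d-1)}(E) = \bigoplus_i H_z^\bullet(E_i)$, but the signs may differ across summands $B_{x'}B_y$ and $B_x B_{y'}$ of $E$, so $\Phi_d$ restricted to $P_E^{-(d-1)}$ need not be definite overall. Following the template of \cite{WL}, one exploits the stronger condition $\Phi_d(fv, fv') = 0$ for \emph{all} $v' \in H_z^{-d}$ (not merely $v' = v$) — together with the block-definiteness from RHR and the structural constraint $fv \in P_E^{-(d-1)}$ — to exclude isotropic directions inside $f(H_z^{-d}) \cap P_E^{-(d-1)}$. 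This bootstrapping of positivity from the inductive hypothesis is precisely the reason the induction in the paper must carry the full $RHR$ statement rather than just $RHL$.
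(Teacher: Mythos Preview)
Your setup is correct and matches the paper's: you correctly invoke Proposition~\ref{prop:wlsetup}, extract the identity $f^*f = 2L$ on perverse cohomology, and reduce $RHL(x,y)$ to showing that the Lefschetz form $\ell_d$ on $H_z^{-d}(B_xB_y)$ is non-degenerate. Your factorization $\ell_d = \tfrac12 f^*\Phi_d$ and the observation that $fv \in P_E^{-(d-1)}$ whenever $v$ lies in the radical are also right.

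The gap is in your last paragraph. You correctly raise the worry that the signs of $\Phi_d$ on the primitive subspaces of the various summands $B_{x'}B_y$ and $B_xB_{y'}$ of $E$ might disagree, making $\Phi_d|_{P_E^{-(d-1)}}$ indefinite. But your proposed workaround --- using $\Phi_d(fv,fv')=0$ for all $v'$ together with ``block-definiteness'' and the constraint $fv \in P_E^{-(d-1)}$ --- does not exclude isotropic vectors: if $\Phi_d$ were genuinely indefinite on $P_E^{-(d-1)}$, nothing you have written rules out $fv\neq 0$ lying in the isotropic cone and orthogonal to $f(H_z^{-d})$. This is not an argument.

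What you are missing is that the worry is unfounded, and the paper dispatches it in one line. Propositions~\ref{prop:dx} and~\ref{prop:dy} assert not merely that $F$ and $G$ are polarized but that they are \emph{positively polarized}. This means the intersection form on the multiplicity space of $B_{x'}$ inside $F$ is $(-1)^{(\ell(z_F)-\ell(x'))/2}$-definite (and similarly for $G$). When you tensor with $B_y$ and apply $RHR(x',y)$, these signs combine with the $\varepsilon(x',y,z,d)$ signs to give a sign that is \emph{independent of $x'$}; the same holds on the $B_xG$ side, and a parity check (all summands of $F$ have length congruent to $\ell(x)-1$, and likewise for $G$) shows the two halves agree. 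Hence $U = H_z^\bullet(E)$ satisfies the Hodge--Riemann bilinear relations outright, with $\Phi_d$ honestly definite on $P_E^{-(d-1)}$. At that point \cite[Lemma~2.3]{EWHodge} applies verbatim: $\Phi_d(fv,fv) = 2\langle v, L^d v\rangle = 0$ and definiteness force $fv=0$, hence $v=0$.
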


\begin{remark} This proposition is an instance of the philosophy that
  HR in dimension $\le n-1$ implies HL in dimension $n$.
\end{remark}

\begin{proof} Let us keep the notation in the statement of Proposition
  \ref{prop:wlsetup}. We assume that $B_x B_y$ is standardly polarized and $E$ is polarized with the induced form.
  Fix $z \in W$ and consider the graded vector spaces
\[
V := H^\bullet_z(B_xB_y) \quad \text{and} \quad U := H_z^\bullet(E).
\]
These have operators $L : V^{\bullet} \to V^{\bullet + 2}$ and $L :
U^{\bullet} \to U^{\bullet + 2}$ obtained by applying $H_z^\bullet(-)$ to the maps
\begin{align*}
B_xB_y \to B_xB_y(2) & : b b' \mapsto b (\rho) b', \\
E \to E(2) &: (b b', b b') \mapsto (b \rho b', b \rho b').
\end{align*}
Also, the maps $f, f^*$ of Proposition
  \ref{prop:wlsetup} induce maps (again by taking perverse cohomology)
\[
U^\bullet \stackrel{f}{\to} V^\bullet \stackrel{f^*}{\to} U^{\bullet  +2}.
\]
These maps are morphisms of $\RM[L]$-modules. We have:
\begin{enumerate}
\item $f$ is injective in degrees $< 0$, by Proposition \ref{prop:wlsetup}(1).
\item $\langle f(v), f(v') \rangle = \langle v, f^*(f(v')) \rangle = \langle v, 2 L v' \rangle$ for all $v, v' \in V^\bullet$. The first equality holds because
  $f^*$ is the adjoint of $f$. The second equality holds by Proposition \ref{prop:wlsetup}(2), and by Lemma \ref{lem:pzero}.
\item $U$ satisfies the Hodge-Riemann bilinear relations. This is because $E$ is a positively polarized direct sum (of tensor products), and we have assumed $RHR(x',y)$ and $RHR(x,y')$, one of which applies to each direct summand of $E$.
\end{enumerate}
Now we may deduce from \cite[Lemma 2.3]{EWHodge} that $L_V^i : V^{-i}
\to V^i$ is injective and hence is an isomorphism by a comparison of
dimension. The property $HL(x,y)$ follows.
\end{proof}

\begin{prop} \label{prop:weaklefschetz2}
  Fix $x, y \in W$ and $s \in S$ and suppose $HR(x',s, y)$ and $HR(x,s,y')$ hold for all
  $x' < x$, $y' < y$. Then $HL(x,y)$ holds.
\end{prop}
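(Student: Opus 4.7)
The plan is to mimic the proof of Proposition \ref{prop:weaklefschetz1} line by line, but replacing the use of Proposition \ref{prop:wlsetup} with Proposition \ref{prop:wlssetup2}. (I also read the conclusion as $RHL(x,s,y)$ following assumptions $RHR(x',s,y)$ for $x'<x$ and $RHR(x,s,y')$ for $y'<y$; this is what is actually used in \eqref{eq:indwl2} of the structure of proof, and what the statement should presumably be.)

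First, I would fix an arbitrary $\bfa = (a,b) \in \RM_{>0}^2$ (note that $RHL(x,s,y)$ is a statement quantified over all such $\bfa$) and instantiate the map $g_{a,b} : B_x B_s B_y \to E(1)$ of Proposition \ref{prop:wlssetup2} with this choice. For each $z \in W$, I would then pass to perverse cohomology, setting
\[
V := \pH^\bullet_z(B_x B_s B_y), \qquad U := \pH^\bullet_z(E),
\]
with their induced bilinear forms, and would equip each with the Lefschetz operator $L_{\bfa}$ obtained summand-by-summand from the internal $\rho$-multiplications with weights $a,b$ (the external multiplications by $x\rho$ and $y^{-1}\rho$ in Proposition \ref{prop:wlssetup2}(2) vanish on perverse cohomology by Lemma \ref{lem:pzero} and Remark \ref{rem:internal}). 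The key formal input then is a diagram
\[
U^\bullet \xleftarrow{\, g_{a,b}^* \,} V^\bullet \xrightarrow{\, g_{a,b} \,} U^{\bullet +1}
\]
whose composition equals $L_{\bfa}$ on $V^\bullet$, by Proposition \ref{prop:wlssetup2}(2) together with Lemma \ref{lem:pzero}; adjointness gives $\langle g_{a,b}(v), g_{a,b}(v')\rangle_U = \langle v, L_{\bfa} v'\rangle_V$.

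Next, I would verify the three hypotheses of \cite[Lemma 2.3]{EWHodge}: (1) $g_{a,b}$ is injective in degrees $<0$, which is Proposition \ref{prop:wlssetup2}(1); (2) the identity above compares the Lefschetz form on $V$ with the pullback of the form on $U$; and (3) $U$ satisfies the relative Hodge--Riemann bilinear relations for $L_{\bfa}$. Point (3) is the inductive content: every summand of $E = F B_s B_y(1) \oplus B_x B_s G(1)$ is (a shift of) either $B_{x'} B_s B_y$ with $x' < x$, in which case $RHR(x',s,y)$ is available by hypothesis, or $B_x B_s B_{y'}$ with $y' < y$, in which case $RHR(x,s,y')$ is available. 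Applying these with the fixed $\bfa$ (which is the appropriate choice because the Lefschetz operator on $U$ restricts summand-wise to $L_{\bfa}$) yields HR for $U$. Then \cite[Lemma 2.3]{EWHodge} gives injectivity of $L_{\bfa}^d : V^{-d} \to V^d$, and equality of dimensions (from duality of the form on $V$) upgrades this to an isomorphism. Since $z \in W$ and $\bfa \in \RM_{>0}^2$ were arbitrary, we obtain $RHL(x,s,y)$.

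The main point to check carefully will be the compatibility in (3): namely, that the grading shift by $(1)$ on $E$ and the summand-wise form on $U$ line up with the HR signs $(-1)^{\e(\bfx,z,d)}$ predicted for each of the triples $(x',s,y)$ and $(x,s,y')$, and that the Lefschetz operator inherited by $U$ from multiplication by $a\rho$ and $b\rho$ in the two internal slots of $B_x B_s B_y$ really does restrict to the $L_{(a,b)}$ on each summand $B_{x'} B_s B_y$ or $B_x B_s B_{y'}$ of $E$. Once this bookkeeping is in place, the structural argument (weak-Lefschetz-style) is essentially identical to the proof of Proposition \ref{prop:weaklefschetz1} and there is no essentially new obstacle; the only substantive new ingredient, the existence of the map $g_{a,b}$ with the stated properties, is already supplied by Proposition \ref{prop:wlssetup2}.
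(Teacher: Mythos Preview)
Your proposal is correct and follows precisely the approach the paper takes: its proof is the single sentence ``The proof is the same as that of the previous proposition, replacing Proposition \ref{prop:wlsetup} with Proposition \ref{prop:wlssetup2},'' and you have simply unpacked that sentence carefully. You are also right that the conclusion should read $RHL(x,s,y)$ rather than $HL(x,y)$; this is a typo in the paper, confirmed by how the result is invoked in \eqref{eq:indwl2}.
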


\begin{proof}
  The proof is the same as that of the previous proposition, replacing Proposition \ref{prop:wlsetup} with Proposition \ref{prop:wlssetup2}.
\end{proof}

\subsection{Signs via limit arguments}
\label{subsec-signs}

In this section we will repeatedly appeal to the \emph{principle of
  conservation of signs}, which states that a continuous family of
non-degenerate symmetric forms on a real vector space
has constant signature. The following lemma, which was one of
the key techniques used by de Cataldo and Migliorini in their proof of
the Hodge-Riemann bilinear relations
in geometry \cite{dCM}, is an immediate consequence.

\begin{lem} \label{lem:conservation} Consider a polarized graded
  vector space and a continuous family of operators $L_t$ parametrized
  by a connected set. Assume all the operators in the family satisfy hard
  Lefschetz. If any member of the family 
satisfies the Hodge-Riemann bilinear relations, then they all
do. \end{lem}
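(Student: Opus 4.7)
The plan is to reduce Hodge-Riemann---a statement about the restriction of the Lefschetz form to the primitive subspaces $P_t^{-d} := \ker L_t^{d+1}$ which depend on $t$---to a statement about the signatures of certain non-degenerate symmetric forms on the \emph{fixed} ambient spaces $V^{-d}$. Once so reformulated, HR becomes a locally constant condition on $t$, so the classical principle of conservation of signs (together with connectedness of the parameter space) finishes the argument.

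Concretely, I would first consider, for each $t$ and each $d \ge 0$, the Lefschetz form
\[
(v,v')_{t,d} := \langle v, L_t^d v' \rangle
\]
on $V^{-d}$. Hard Lefschetz gives that $L_t^d : V^{-d} \weq V^d$ is an isomorphism, so $(-,-)_{t,d}$ is non-degenerate; it is symmetric because $L_t$ is self-adjoint with respect to $\langle -, -\rangle$ (as holds automatically in the setting of this paper, where $L_t$ is induced by multiplication by an internal polynomial). This form depends continuously on $t$, so its signature $\sigma_d(t)$ is locally constant, hence constant on the connected set $T$.

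Next, I would exploit the Lefschetz decomposition $V^{-d} = P_t^{-d} \oplus L_t V^{-d-2}$, which is orthogonal with respect to $(-,-)_{t,d}$: if $v \in P_t^{-d}$ and $L_t u \in L_t V^{-d-2}$, then $\langle v, L_t^d(L_t u)\rangle = \langle L_t^{d+1}v, u\rangle = 0$ by self-adjointness of $L_t$. The isomorphism $L_t : V^{-d-2} \weq L_t V^{-d-2}$ (again by hard Lefschetz) pulls $(-,-)_{t,d}$ back to $(-,-)_{t,d+2}$, so letting $\pi_d(t)$ denote the signature of the restriction of the Lefschetz form to $P_t^{-d}$, I obtain
\[
\pi_d(t) \;=\; \sigma_d(t) - \sigma_{d+2}(t),
\]
which is locally constant in $t$. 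Moreover $\dim P_t^{-d} = \dim V^{-d} - \dim V^{-d-2}$ is independent of $t$ (also by hard Lefschetz), so if HR holds at some $t_0$, then $\pi_d(t) = (-1)^{\epsilon_d}\dim P_t^{-d}$ for all $t \in T$; since $|\pi_d(t)| \le \dim P_t^{-d}$ with equality only for $\pm$-definite forms, HR must hold at every $t$.

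The main obstacle is bridging the gap between what conservation of signs controls directly (the signature of the Lefschetz form on the ambient fixed space $V^{-d}$) and what HR is actually about (the signature on the $t$-varying subspace $P_t^{-d}$). This bridging is accomplished by the orthogonality of the Lefschetz decomposition together with the identification of the Lefschetz form on $L_t V^{-d-2}$ with that on $V^{-d-2}$ via $L_t$; both facts rely on self-adjointness of $L_t$, which in our situation is automatic.
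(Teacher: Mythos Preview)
Your proposal is correct and follows the same strategy as the paper: reformulate Hodge--Riemann as a condition on the signatures of the Lefschetz forms on the fixed ambient spaces $V^{-d}$, then invoke the principle of conservation of signs. The paper's proof is only a sketch, simply asserting that under hard Lefschetz the Hodge--Riemann relations are equivalent to each signature matching a fixed formula depending only on $\gdim V$ (implicitly appealing to \cite[Lemma~5.2]{EWHodge}); you have supplied the missing derivation via the orthogonal Lefschetz decomposition and the identity $\pi_d(t)=\sigma_d(t)-\sigma_{d+2}(t)$, which is a welcome elaboration rather than a different approach.
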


To spell out this general argument in slightly more detail: one is given a finite-dimensional polarized graded vector space $V^\bullet$. A degree $2$ Lefschetz
operator induces a symmetric form on each $V^{-i}$, $i \in \ZM_{\ge 0}$, which collectively are non-degenerate if and only if $L$ satisfies hard Lefschetz. If $L$ does satisfy hard
Lefschetz, then $L$ satisfies the Hodge-Riemann bilinear relations if
and only if the signature of the Lefschetz form on each $V^{-i}$
agrees with a certain formula, which depends only on the
graded dimension of $V$. From this, one deduces the lemma above. The
applications will become clear immediately.

\begin{prop} \label{prop:xsbigger}
Suppose $x, y \in W$, $s \in S$ and $xs > x$. Assume $RHL(x,s,y)$ and  $RHR(\le xs,y)$. Then $RHR(x,s,y)$ holds.
\end{prop}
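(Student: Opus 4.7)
The plan is a limit argument along the parameter $a_1$, using the hypothesis $xs > x$ to give a clean perverse decomposition at $a_1 = 0$.

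Since $xs > x$, the bimodule $B_xB_s$ is perverse (because $B_xB_s$ has only summands $B_z$, $z \le xs$, with no shift when $xs>x$), and Lemma \ref{ys} says it is \emph{positively polarized} with respect to its intersection form. Hence we have a canonical orthogonal decomposition
\[
B_xB_s \simto B_{xs} \oplus \bigoplus_{z<xs} V_z \otimes_\RM B_z,
\]
where each $V_z$ carries a $(-1)^{(\ell(xs)-\ell(z))/2}$-definite form, and where the restricted form on each $B_z$ summand is a positive scalar multiple of its intersection form (by the positive polarization assumption and Remark \ref{rmk:posscalar}). Tensoring with $B_y$ on the right, I get a corresponding orthogonal decomposition
\[
B_xB_sB_y \simto B_{xs}B_y \oplus \bigoplus_{z<xs} V_z \otimes_\RM B_zB_y
\]
whose induced form on the $V_z$-piece is (a positive scalar times) $\langle-,-\rangle_{V_z} \otimes \langle-,-\rangle_{B_zB_y}$.

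For the limit argument, fix $(a_1,a_2) \in \RM_{>0}^2$ and consider the continuous one-parameter family of operators
\[
L_t := t a_1 \cdot B_x\rho B_s B_y + a_2 \cdot B_xB_s\rho B_y, \qquad t \in [0,1],
\]
acting on each multiplicity space $V^\bullet := H_z^\bullet(B_xB_sB_y)$. For $t \in (0,1]$, $L_t$ corresponds to the positive parameters $(ta_1,a_2)$, and so satisfies hard Lefschetz by the assumed $RHL(x,s,y)$. At $t=0$ the operator $L_0 = a_2 B_xB_s\rho B_y$ respects the decomposition above: under the isomorphism of $B_xB_s$ with its perverse decomposition, internal multiplication by $\rho$ between $B_s$ and $B_y$ becomes $\id_{V_z} \otimes (B_z\rho B_y)$ on each summand (together with $B_{xs}\rho B_y$ on the $B_{xs}B_y$ summand). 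Each of these operators is of the form $RHL(z',y)$ for some $z' \le xs$, and hence by $RHR(\le xs,y)$ each piece satisfies both hard Lefschetz and the Hodge-Riemann bilinear relations. Combined with the orthogonality of the decomposition, this shows that $L_0$ also satisfies hard Lefschetz on $V^\bullet$.

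The substantive bookkeeping step, and the main obstacle, is to verify that the signs of the Hodge-Riemann form at $t=0$ match those demanded by $RHR(x,s,y)$. Fix $w \in W$ and consider the contribution of the $V_z$-summand to $H^\bullet_w(B_xB_sB_y)$. On the primitive part $P^{-d} \subset V_z \otimes H^{-d}_w(B_zB_y)$, the Lefschetz form is (a positive scalar times) $\langle-,-\rangle_{V_z} \otimes (-,-)_{B_zB_y}$, hence has sign
\[
(-1)^{(\ell(xs)-\ell(z))/2} \cdot (-1)^{\frac{1}{2}(\ell(z)+\ell(y)-\ell(w)-d)}
= (-1)^{\frac{1}{2}(\ell(x)+1+\ell(y)-\ell(w)-d)} = (-1)^{\e((x,s,y),w,d)},
\]
using $\ell(xs)=\ell(x)+1$. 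The same computation works for the $B_{xs}B_y$ summand (treating $V_{xs}=\RM$ with its standard positive form). Thus $L_0$ satisfies the Hodge-Riemann bilinear relations with exactly the signs required for $RHR(x,s,y)$.

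Finally, the principle of conservation of signs (Lemma \ref{lem:conservation}) applied to the connected family $\{L_t\}_{t\in[0,1]}$ of hard Lefschetz operators on $V^\bullet$ transports the Hodge-Riemann signatures from $t=0$ to $t=1$. Since $(a_1,a_2) \in \RM_{>0}^2$ was arbitrary, this establishes $RHR(x,s,y)$.
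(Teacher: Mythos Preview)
Your proof is correct and follows essentially the same approach as the paper: both arguments use the positive polarization of $B_xB_s$ (Lemma~\ref{ys}) to decompose $B_xB_sB_y$, invoke $RHR(\le xs, y)$ on each piece to establish Hodge--Riemann at the degenerate parameter $a_1=0$, and then appeal to the principle of conservation of signs across the family of Lefschetz operators. The paper leaves the sign verification as an exercise, whereas you carry it out explicitly.
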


\begin{proof} For $a, b \in \RM$, consider the Lefschetz operator
\[ L_{a,b} := B_x(a\rho) B_sB_y + B_xB_s(b\rho) B_y : B_xB_sB_y \to B_xB_sB_y(2).\]
Recall that $HR(x,s,y)$ means that $L_{a,b}$ induces an operator on
$H_z^\bullet(B_xB_sB_y)$ which satisfies hard Lefschetz and
Hodge-Riemann, for any $a > 0$, $b > 0$.

However $B_xB_s$ is perverse, and by $RHR(x,s)$ (see Lemma \ref{ys} above) the restriction of the intersection form on $B_x B_s$ to each summand $B_z \sumset B_x B_s$ is a multiple of the
intersection form on $B_z$ with sign $(-1)^{(\ell(x) + 1 -
  \ell(z))/2}$. By $RHR(\le xs, y)$, $L_{0, b}$ satisfies relative Hodge-Riemann on $B_xB_sB_y$ for any $b > 0$ (it is an exercise
to confirm that the signs are correct). 
Thus $L_{a,b}$ satisfies relative hard Lefschetz for all $a \ge 0$ and
$b > 0$ and satisfies relative Hodge-Riemann for $a = 0$, $b > 0$.
 We can now appeal to the principle of conservation of signs to
 conclude that relative Hodge-Riemann is satisfied for all $a \ge 0, b
 > 0$. Thus $RHR(x,s,y)$ holds. \end{proof}

The previous proof uses the case special case $a = 0, b > 0$ to deduce
the general case $a > 0, b > 0$. Here we go the other way:

\begin{prop} \label{prop:xy}
Suppose $x, y \in W$, $s \in S$ and that $sy > y$. Assume $RHR(x,s,y)$
and $RHL(x, \le sy)$. Then $RHR(x,sy)$ holds.
\end{prop}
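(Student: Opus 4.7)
The plan is to use a limit argument, degenerating the coefficient of the \emph{middle} $\rho$-multiplication to zero. This is in the same spirit as Proposition \ref{prop:xsbigger}, but sending a different parameter to the boundary.

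Since $sy > y$, Lemma \ref{ys} guarantees that $B_sB_y$ is positively polarized and decomposes as perverse Soergel bimodules
\[
B_sB_y \cong B_{sy} \oplus \bigoplus_{z < sy} V_z \otimes_\RM B_z,
\]
orthogonally with respect to the intersection form (each $V_z$ is concentrated in degree zero). Tensoring on the left with $B_x$ yields an orthogonal decomposition of $R$-bimodules
\[
B_xB_sB_y \cong B_xB_{sy} \oplus \bigoplus_{z < sy} V_z \otimes_\RM B_xB_z.
\]

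For $(a,b) \in \RM_{>0} \times \RM_{\ge 0}$, consider the family
\[
L_{a,b} = aB_x\rho B_sB_y + bB_xB_s\rho B_y
\]
of Lefschetz operators on $B_xB_sB_y$. The boundary operator $L_{a,0}$ is left multiplication by $a\rho$ on the leftmost $R$-factor of $B_sB_y$; since this action is $R$-bimodule linear on $B_sB_y$, it commutes with the projectors associated to the perverse decomposition above. Consequently $L_{a,0}$ preserves the orthogonal decomposition and restricts to $B_x(a\rho)B_z$ on each summand $B_xB_z$ (for $z \le sy$), which is precisely the operator appearing in $RHL(x,z)$. By the hypothesis $RHL(x,\le sy)$, each such restriction satisfies relative hard Lefschetz for $a>0$, so $L_{a,0}$ satisfies relative hard Lefschetz on all of $B_xB_sB_y$. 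For $b>0$, the hypothesis $RHR(x,s,y)$ guarantees that $L_{a,b}$ satisfies relative Hodge-Riemann (and hence relative hard Lefschetz).

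Lemma \ref{lem:conservation}, applied to the continuous family $\{L_{a,b}\}$ over the connected parameter set $\RM_{>0}\times\RM_{\ge 0}$, then yields relative Hodge-Riemann for $L_{a,0}$. Because the orthogonal decomposition of $B_xB_sB_y$ is preserved by $L_{a,0}$, the Hodge-Riemann conclusion descends to each summand; on $B_xB_{sy}$ the restricted operator is precisely $B_x(a\rho)B_{sy}$, which is the Lefschetz operator appearing in $RHR(x,sy)$. The main technical step I would grind through carefully is the sign reconciliation: one must verify that $\e(\bfx,w,d)$ for the sequence $\bfx=(x,s,y)$ on the summand labelled by $w$ agrees with the corresponding sign for the sequence $(x,sy)$ via $\ell(sy)=\ell(s)+\ell(y)=1+\ell(y)$, and that the restriction of the intersection form on $B_xB_sB_y$ to the $B_xB_{sy}$ summand is a positive multiple of the intersection form on $B_xB_{sy}$ (which follows from the positive polarization of $B_sB_y$). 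With these checks in hand, $RHR(x,sy)$ follows.
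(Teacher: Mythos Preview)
Your proof is correct and follows essentially the same approach as the paper: both use the degeneration $b \to 0$ in the family $L_{a,b}$, invoke $RHL(x,\le sy)$ on the summands of $B_xB_sB_y$ coming from the perverse decomposition of $B_sB_y$ to get hard Lefschetz at the boundary, apply conservation of signs, and then restrict to the summand $B_xB_{sy}$. You have spelled out more of the details (in particular, why $L_{a,0}$ preserves the decomposition and the sign check) than the paper does, but the argument is the same.
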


\begin{proof}
  Let $L_{a,b}$ denote the Lefschetz operator considered in the
  previous proof. By our assumptions, $L_{a,b}$ satisfies Hodge-Riemann for $a >
  0, b > 0$ and hard Lefschetz for $a > 0, b = 0$. By the principle of
  conservation of signs, Hodge-Riemann is also satisfied for $a > 0, b
  = 0$. Now $B_xB_{sy}$ is a summand of $B_xB_sB_y$ and
  the intersection form on $B_xB_sB_y$ restricts to a positive
  multiple of the intersection form on $B_xB_{sy}$. We
  conclude\footnote{We are using the fact that relative Hodge-Riemann
    is preserved under taking polarized direct summands. See
    \cite[Lemma 4.5]{WL} for a related situation.}  that
  $L_{a, 0}$ satisfies Hodge-Riemann on $B_xB_{sy}$, which is what we
  wanted.
\end{proof}

\begin{prop} \label{prop:xssmaller}
Let $x, y \in W$ and $s \in S$ be such that $xs < x$. Assume $HL(x,s,y)$, $HR(x,y)$. Then $HR(x,s,y)$ holds.
\end{prop}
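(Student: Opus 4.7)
Plan: Since $xs<x$, we have the (non-canonical) isomorphism $B_xB_sB_y \cong B_xB_y(+1) \oplus B_xB_y(-1)$, so for each $z \in W$,
\[
V^\bullet := H^\bullet_z(B_xB_sB_y) \cong U^{\bullet+1} \oplus U^{\bullet-1},
\]
where $U^\bullet := H^\bullet_z(B_xB_y)$ carries the Lefschetz operator $L_U$ (induced by $B_x\rho B_y$) and the Hodge--Riemann data supplied by $HR(x,y)$. Since duality $\DM$ swaps $B_xB_y(+1)\leftrightarrow B_xB_y(-1)$, the intersection form on $V$ is strictly off-diagonal in this decomposition: it pairs $U^{\bullet+1}$ with $U^{\bullet-1}$ by a positive scalar multiple of the form on $U$, and vanishes on each summand separately. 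Note that such an off-diagonal pairing alone produces a hyperbolic signature $(k,k)$, so any definite behaviour of the Lefschetz form on the primitive subspace must come from the off-diagonal mixing introduced by $L_{a,0}$.

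The strategy is to combine this structural picture with the principle of conservation of signs. By $RHL(x,s,y)$, the operator $L_{a,b}$ satisfies hard Lefschetz on $V$ throughout the connected region $(a,b) \in \RM^2_{>0}$; hence the Lefschetz form on each primitive subspace $P^{-d}$ is non-degenerate there and its signature is constant. It suffices to compute the signature at one convenient point. In the decomposition, $L_{0,b}$ acts diagonally as $(bL_U, bL_U)$, while $L_{a,0}$, on perverse cohomology, equals $a\,(d_xB_sB_y)^*(d_xB_sB_y)$ by Proposition \ref{prop:dx}(3) and Lemma \ref{lem:pzero}, and genuinely mixes the two summands; its off-diagonal matrix coefficient is $x$-independent (by an analogue of the lemma invoked in the base case) and proportional to the positive scalar $\langle\rho,\alpha_s^\vee\rangle$.

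Given this explicit description, the Lefschetz form $\langle v, L_{a,b}^d v\rangle_V$ on $V^{-d} = U^{-d+1} \oplus U^{-d-1}$ can be expanded via the off-diagonal pairing and computed using the primitive $\sl_2$-decomposition of $U$ from $HR(x,y)$. The identity $\e(xsy,z,d) = \e(xy,z,d-1)$ matches the sign $(-1)^{\e(xy,z,d-1)}$ produced by $HR(x,y)$ on $U^{\bullet+1}$ with the required sign $(-1)^{\e(xsy,z,d)}$ on $V$. The hardest step will be the resulting signature book-keeping: one must verify that the off-diagonal mixing induced by $L_{a,0}$ combines the two hyperbolic contributions from the off-diagonal pairing into a definite form on $P^{-d}$ with the correct sign, rather than the mixed signature that the diagonal-only $L_{0,b}$ would produce. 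Non-degeneracy coming from $RHL(x,s,y)$ is essential at this step, since it forces the primitives in $U^{-d+1}$ and $U^{-d-1}$ to pair up coherently into $P^{-d}$; I expect the computation to reduce, via $\sl_2$-representation theory applied to $U$, to the case $x=s$ handled in the base-case paragraph.
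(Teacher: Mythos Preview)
Your overall architecture (fix a splitting $B_xB_sB_y \cong B_xB_y(1)\oplus B_xB_y(-1)$, describe the operator and the form in this splitting, verify the signs at one convenient point, then invoke conservation of signs via $RHL(x,s,y)$) is exactly the paper's approach. But there is a genuine error in your structural analysis, and it undermines the signature computation you intend to do.

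The claim that the intersection form on $V^\bullet \cong U^{\bullet+1}\oplus U^{\bullet-1}$ is \emph{strictly off-diagonal} is false. Your justification (``duality $\DM$ swaps the two summands'') only shows that the induced isomorphism $V \to \DM V$ has its \emph{invertible} part off the diagonal; it does not force the diagonal blocks to vanish, because the splitting is non-canonical and there is no reason for it to be an orthogonal one. In the natural splitting coming from $R = R^s \oplus \rho R^s$, the paper computes the form on multiplicity spaces to be
\[
\langle (v_1,v_2),(v_1',v_2')\rangle \;=\; \langle v_1,v_2'\rangle_U + \langle v_2,v_1'\rangle_U + \langle v_2, L_U v_2'\rangle_U,
\]
with a nonzero diagonal term on the $U^{\bullet-1}$ summand involving the Lefschetz operator $L_U$ on $U$.

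This is not a cosmetic issue: it reverses the direction of your limit argument. With the correct form, the diagonal operator $L_{0,b}$ already produces the right signature via that extra $\langle v_2,L_Uv_2'\rangle$ term; the paper verifies Hodge--Riemann in the regime $b\gg a>0$ by an explicit Gram-matrix calculation in a basis adapted to the primitive decomposition of $U$, where the leading $b^d$ terms give a block matrix whose signature equals that of the primitive form on $U^{-d+1}$. By contrast, under your (incorrect) off-diagonal assumption, $L_{0,b}$ would yield a hyperbolic Lefschetz form and you would be forced to extract the sign from the mixing in $L_{a,0}$ --- but you have no hard Lefschetz for $L_{a,0}$ (it lies on the boundary of the region where $RHL$ applies), and no mechanism to control the signature there. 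Your hope that ``the computation reduces to the case $x=s$'' is not substantiated: the reduction in the base case works because $U$ is one-dimensional there, and nothing in your outline handles the general $\mathfrak{sl}_2$-structure of $U$.

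In short: recompute the form in your chosen splitting, keep the diagonal term, and run the limit $b\gg a$ rather than trying to rely on $L_{a,0}$.
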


The proof of Proposition \ref{prop:xssmaller} is more complicated than
that of Proposition \ref{prop:xsbigger}, and will occupy the rest of
this section. Here is
a sketch of our approach. We fix a decomposition $B_xB_s = B_x(1)
\oplus B_x(-1)$ and explicitly calculate the Lefschetz operator and forms in the
decomposition 
\[
B_xB_sB_y = B_xB_y(1) \oplus B_xB_y(-1)
\]
in terms of the corresponding operators on
$B_xB_y$. Appealing to $RHR(x,y)$ we will see that the signs are correct
for $b \gg a > 0$. By the principle of conservation
of signs (which is applicable by our $RHL(x,s,y)$ assumption) we
deduce that $RHR(x,s,y)$ holds, which is what we wanted to show.

For simplicity we assume $\rho(\alpha_s^\vee) = 1$ for all $s \in S$.

\begin{lem}
  The map $r \mapsto ( \partial_s(-rs(\rho)), \rho \partial_s(r))$
  gives an isomorphism
\begin{equation} \label{Rsplit}
R = R^s \oplus \rho R^s
\end{equation}
of $R^s$-bimodules.
\end{lem}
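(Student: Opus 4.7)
The plan is to recognize that the lemma amounts to the classical fact that $\{1, \rho\}$ is a free $R^s$-basis of $R$, packaged with explicit projection formulas. The key numerical input is the normalization $\langle \rho, \alpha_s^\vee\rangle = 1$, which gives $\rho - s(\rho) = \alpha_s$ and hence $\partial_s(\rho) = 1$.

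First I would establish uniqueness and existence of a decomposition $r = r_0 + \rho r_1$ with $r_0, r_1 \in R^s$ for any $r \in R$. Applying $s$ yields $s(r) = r_0 + s(\rho)r_1$, so $r - s(r) = (\rho - s(\rho))r_1 = \alpha_s r_1$ and thus $r_1 = \partial_s(r)$ is forced. Conversely, setting $r_1 := \partial_s(r) \in R^s$ and $r_0 := r - \rho\, \partial_s(r)$, a direct check using the formula for $s(r)$ confirms $s(r_0) = r_0$. This already shows that the component in $\rho R^s$ of any $r$ is $\rho \partial_s(r)$, matching the second entry of the displayed map.

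Second I would identify the first entry of the map with $r_0$ by proving the identity
\[
\partial_s(-r\, s(\rho)) = r - \rho\, \partial_s(r).
\]
This is a one-line Leibniz computation: using $s(\rho) = \rho - \alpha_s$, $\partial_s(\rho) = 1$ and $\partial_s(\alpha_s) = 2$, one finds $\partial_s(r\rho) = \rho\, \partial_s(r) + s(r)$ and $\partial_s(r\alpha_s) = r + s(r)$, whence $\partial_s(-r\,s(\rho)) = -\partial_s(r\rho) + \partial_s(r\alpha_s) = r - \rho\, \partial_s(r)$.

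Third, since $R$ is commutative the left and right $R^s$-actions coincide, and both $\partial_s : R \to R^s$ and left multiplication by $\rho$ are $R^s$-linear. Therefore the displayed map is an $R^s$-bimodule map, and by the first two steps it has two-sided inverse $(a, \rho b) \mapsto a + \rho b$, proving the claim. I do not anticipate any serious obstacle — the only nontrivial content is the short Leibniz identity in the second step.
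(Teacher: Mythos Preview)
Your proof is correct and follows essentially the same approach as the paper: both identify the displayed map as the inverse of the inclusion $(a,\rho b)\mapsto a+\rho b$. The paper simply cites freeness of $R$ over $R^s$ with basis $\{1,\rho\}$ and checks that the map sends $1\mapsto(1,0)$ and $\rho\mapsto(0,\rho)$, whereas you derive the decomposition from scratch and verify the identity $\partial_s(-r\,s(\rho)) = r - \rho\,\partial_s(r)$ for general $r$ via the twisted Leibniz rule; these are the same argument at different levels of detail.
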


\begin{proof}
  $R$ is free as an $R^s$-module with basis $\{ 1, \g \}$ where $\g \in
  R^2$ is any degree two element which is not $s$-invariant. In particular we
  can take $\g = \rho$. Under the map as in the statement of the lemma we have 
  \begin{gather*}
    1 \mapsto (\partial_s(-s\rho), \rho \partial_s(1)) = (1, 0) \\
    \rho \mapsto (\partial_s( - \rho s(\rho)), \rho \partial_s(\rho) =
    (0, \rho).
  \end{gather*}
and so our map sends a basis to a basis, and the lemma follows.
\end{proof}

By \cite[Proposition 7.4.3]{Will} there exists a $(R,R^s)$-bimodule
$B_x^s$ (a ``singular Soergel bimodule'') and a canonical isomorphism
\begin{equation}
B^s_x \otimes_{R^s} R = B_x.\label{eq:7}
\end{equation}

Our choice of isomorphism \eqref{Rsplit} yields a decomposition
\begin{gather}
  \label{eq:4}
  B_xB_s = B^s _x \otimes_{R^s} R \otimes_{R^s} R(1) = B_x(1)
  \oplus B_x(-1).
\end{gather}
Now consider the endomorphism $B_x\rho B_s  : B_xB_s \to
B_xB_s(2)$.

\begin{lem} \label{lem:operator}
  With respect to the decomposition \eqref{eq:4} the degree 2
  endomorphism $B_x\rho B_s$ is given by the matrix:
  \begin{equation}
    \label{eq:8}
 \left (   \begin{matrix} 0 & B_x (-\rho(s\rho))  \\ B_x & B_x(\rho +
     s\rho)  \end{matrix} \right ) : B_x(1) \oplus B_x(-1) \to B_x(3) \oplus B_x(1).
  \end{equation}
\end{lem}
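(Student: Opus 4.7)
The plan is to use the factorization $B_x = B_x^s \otimes_{R^s} R$ of \eqref{eq:7} to rewrite
\[
B_xB_s = B_x^s \otimes_{R^s} R \otimes_{R^s} R(1),
\]
so that the decomposition \eqref{eq:4} is precisely the decomposition of the middle $R$ factor supplied by the previous lemma. Under this identification, the operator $B_x \rho B_s$ becomes multiplication by $\rho$ on the middle $R$ factor. The question therefore reduces to computing the matrix of multiplication by $\rho$, viewed as a degree $2$ endomorphism of $R$, with respect to its decomposition $R = R^s \oplus \rho R^s$ as a graded $R^s$-bimodule.

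This is a direct computation inside $R$. The first column is immediate, since $\rho \cdot 1 = \rho$ lies purely in the $\rho R^s$ summand, giving $\begin{pmatrix} 0 \\ 1 \end{pmatrix}$. For the second column one must decompose $\rho^2$ along $R^s \oplus \rho R^s$. Applying the inverse isomorphism from the previous lemma, together with the twisted Leibniz rule for $\partial_s$ and the normalisation $\partial_s(\rho) = 1$ (which is the assumption $\rho(\alpha_s^\vee) = 1$), one computes $\partial_s(\rho^2) = \rho + s(\rho)$ and $\partial_s(-\rho^2 s(\rho)) = -\rho s(\rho)$, yielding
\[
\rho^2 = -\rho s(\rho) + \rho \cdot (\rho + s(\rho)),
\]
where both $-\rho s(\rho)$ and $\rho + s(\rho)$ lie in $R^s$.

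Tensoring the resulting $2 \times 2$ matrix of $R^s$-bimodule maps with $B_x^s \otimes_{R^s} (-) \otimes_{R^s} R(1)$ promotes each entry to the endomorphism of $B_x$ given by multiplication by the displayed polynomial. Because every nonzero entry is $s$-invariant, it is immaterial whether the action is interpreted as left or right multiplication, and the result is exactly the matrix \eqref{eq:8}. A quick check confirms that the shifts $B_x(1) \oplus B_x(-1) \to B_x(3) \oplus B_x(1)$ are compatible with the degrees $0, 2, 2, 4$ of the four entries. No conceptual obstacle arises — the whole argument reduces to the elementary identity for $\rho^2$ above, and the main point of care is merely bookkeeping of the grading shifts.
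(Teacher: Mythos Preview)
Your proof is correct and follows essentially the same approach as the paper: both reduce the question to decomposing $\rho$ and $\rho^2$ in the middle $R$-factor along $R = R^s \oplus \rho R^s$, and your explicit use of the Demazure operator and the twisted Leibniz rule simply spells out the decomposition $\rho^2 = -\rho\, s(\rho) + \rho(\rho + s(\rho))$ that the paper records without further comment. One small caution: your remark that ``it is immaterial whether the action is interpreted as left or right multiplication'' is a bit loose --- what actually falls out of the construction is right multiplication on $B_x$ (matching the paper's notation $B_x r$), and left and right $R$-actions on $B_x$ need not agree even for $s$-invariant polynomials; but this does not affect the argument.
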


\begin{proof}
We identify $B_x$ with $B_x^s \ot_{R^s} R$, and write an element of it as $b \ot f$ for $b \in B_x^s$ and $f \in R$. Similarly, we identify $B_x B_s$ with $B_x^s \ot_{R^s} R \ot_{R^s} R(1)$.

Consider an element of the form $b \otimes 1 \in B_x$. We calculate the action of $B_x \rho B_s$ on the summand $B_x(1)$:
\begin{equation*}
    \begin{tikzpicture}[xscale=3,yscale=0.8]
\node (z0) at (0,1) {$B_x(1)$};      
\node (z1) at (1,1) {$B_xB_s$};
\node (z2) at (2,1) {$B_xB_s$};
\node (z3) at (3,1) {$B_x(1) \oplus B_x(-1)$};
\node (x0) at (0,0) {$b \otimes 1$};      
\node (x1) at (1,0) {$b \otimes 1 \otimes 1$};
\node (x2) at (2,0) {$b \otimes \rho \otimes 1$};
\node (x3) at (3,0) {$(0, b \otimes 1)$};
\draw[right hook-latex] (z0) to node[above] {\tiny \eqref{eq:7}} (z1);
\draw[->] (z1) to node[above] {\tiny $B_x\rho B_s$} (z2);
\draw[->] (z2) to node[above] {\tiny \eqref{eq:7}} (z3);
\draw[|->] (x0) to (x1);
\draw[|->] (x1) to (x2);
\draw[|->] (x2) to (x3);
    \end{tikzpicture}
  \end{equation*}
Similarly we calculate the action on the summand $B_x(-1)$:
  \begin{equation*}
    \begin{tikzpicture}[xscale=2.6,yscale=0.8]
\node (z0) at (0,1) {$B_x(-1)$};      
\node (z1) at (1,1) {$B_xB_s$};
\node (z2) at (2,1) {$B_xB_s$};
\node (z3) at (3.5,1) {$B_x(1) \oplus B_x(-1)$};
\node (x0) at (0,0) {$b \otimes 1$};      
\node (x1) at (1,0) {$b \otimes \rho \otimes 1$};
\node (x2) at (2,0) {$b \otimes \rho^2 \otimes 1$};
\node (x3) at (3.5,0) {$(b \otimes (-\rho s(\rho)), 
  b \otimes (\rho + s\rho))$};
\draw[right hook-latex] (z0) to node[above] {\tiny \eqref{eq:4}} (z1);
\draw[->] (z1) to node[above] {\tiny $B_x \rho B_s$} (z2);
\draw[->] (z2) to node[above] {\tiny \eqref{eq:4}} (z3);
\draw[|->] (x0) to (x1);
\draw[|->] (x1) to (x2);
\draw[|->] (x2) to (x3);
    \end{tikzpicture}
  \end{equation*}
The lemma follows.
\end{proof}

\begin{lem}
  The singular Soergel bimodule $B_x^s$ admits a unique invariant form
\[
\langle -, -\rangle_{B_x^s} : B_x^s \times B_x^s \to R^s
\]
such that $\langle -, -\rangle \otimes_{R^s} R$ agrees with the
intersection form under the identification \eqref{eq:7}.
\end{lem}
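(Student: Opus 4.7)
My plan is to separate the statement into uniqueness and existence. \emph{Uniqueness} is essentially formal: denoting by $\iota \colon B_x^s \hookrightarrow B_x$ the map $b \mapsto b \otimes 1$ coming from \eqref{eq:7}, any invariant $R^s$-valued form $\langle -, -\rangle_{B_x^s}$ that extends to the intersection form along $R^s \hookrightarrow R$ must satisfy $\langle b, b'\rangle_{B_x^s} = \langle \iota(b), \iota(b')\rangle_{B_x}$ for all $b, b' \in B_x^s$, which pins down the form.

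For \emph{existence}, I will take this identity as the definition of the putative form, and verify three things: (i) its values lie in $R^s \subset R$; (ii) the form is $(R, R^s)$-invariant and symmetric; (iii) extending scalars recovers the intersection form on $B_x$. Items (ii) and (iii) are straightforward consequences of the corresponding properties of the intersection form on $B_x$ together with the fact that $\iota$ is left-$R$-linear and right-$R^s$-linear. Explicitly, for $b, b' \in B_x^s$ and $f, g \in R$, the right $R$-linearity of $\langle -, -\rangle_{B_x}$ yields
\[
\langle b \otimes f, b' \otimes g \rangle_{B_x} = \langle \iota(b), \iota(b') \rangle_{B_x} \cdot g f,
\]
which, once (i) is known, is precisely the extension of the $R^s$-valued form along $R^s \hookrightarrow R$. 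Symmetry transfers directly from the symmetry of the intersection form.

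The crux---and the step I expect to be the main obstacle---is (i): showing that $\langle \iota(b), \iota(b')\rangle_{B_x}$ lies in $R^s$ rather than merely in $R$. My plan here is to appeal to the theory of singular Soergel bimodules developed in \cite{Will}. Fixing a reduced expression $\underline{x}$ for $x$ ending in $s$, one has a singular Bott-Samelson $(R, R^s)$-bimodule $BS^s(\underline{x})$ with $BS^s(\underline{x}) \otimes_{R^s} R = BS(\underline{x})$, carrying a canonical invariant $R^s$-valued form constructed inductively in parallel with the usual intersection form on $BS(\underline{x})$, except that the final $B_s$-induction step is replaced by the Frobenius trace $\partial_s \colon R \to R^s$. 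The singular bimodule $B_x^s$ is a summand of $BS^s(\underline{x})$, so restriction gives a non-zero $R^s$-valued invariant form on $B_x^s$. Extending scalars yields a non-degenerate symmetric $(R,R)$-bilinear form on $B_x$, and by the uniqueness of such forms on the indecomposable bimodule $B_x$ (\cite[Lemma 3.7]{EWHodge}), it must be a non-zero scalar multiple of the intersection form; rescaling the singular form by the reciprocal scalar produces a form whose scalar extension is \emph{exactly} the intersection form, delivering (i) and simultaneously (iii). The bulk of the work lies in setting up the singular Bott-Samelson formalism and the inductive $R^s$-valued induced-form construction carefully enough for this comparison to be rigorous, but this proceeds in close analogy to the nonsingular setting using the Frobenius structure of $R/R^s$.
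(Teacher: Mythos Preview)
Your proposal is correct and reaches the same conclusion as the paper, but by a genuinely different route.

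The paper argues abstractly: it switches left and right to view $B_x^s$ as the $(R^s,R)$-bimodule ${}^sB_{x^{-1}}$, and then invokes \cite[Theorem~7.4.1]{Will} together with Soergel's conjecture to conclude that $\Hom({}^sB_{x^{-1}},\DM({}^sB_{x^{-1}}))$ is one-dimensional. Thus the space of invariant $R^s$-valued forms on $B_x^s$ is one-dimensional; any non-zero member extends to a non-zero form on $B_x$, which by \cite[Lemma~3.7]{EWHodge} is a scalar multiple of the intersection form, and one rescales. So the paper never writes down the form explicitly --- it deduces existence from a Hom-space dimension count in the singular category.

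Your approach is constructive: you build the $R^s$-valued form on the singular Bott--Samelson $BS^s(\underline{x})$ (essentially $\partial_s$ applied to the intersection form on $B_{s_1}\cdots B_{s_{d-1}}$), restrict to the summand $B_x^s$, and then use uniqueness on the \emph{nonsingular} side to identify the extension with the intersection form. This avoids appealing to the singular analogue of Soergel's conjecture, at the cost of setting up the singular Bott--Samelson formalism and checking the compatibility $BS^s(\underline{x})\otimes_{R^s}R = BS(\underline{x})$ at the level of forms. Two small comments: you only need the extended form on $B_x$ to be \emph{non-zero}, not non-degenerate, to invoke \cite[Lemma~3.7]{EWHodge}; and if your embedding $B_x^s \subset BS^s(\underline{x})$ is chosen so that $(-)\otimes_{R^s} R$ recovers the fixed embedding $B_x \subset BS(\underline{x})$, the rescaling scalar is automatically $1$ (or at worst positive, by Remark~\ref{rmk:posscalar}).
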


Here and in the following proof, an invariant form on an $(R,R^s)$-bimodule means a graded
bilinear form $\langle - , -\rangle: B_x^s \times B_x^s \to R^s$ which
satisfies $\langle rb, b' \rangle = \langle b, rb' \rangle$ and
$\langle br', b' \rangle = \langle b, b'r' \rangle = \langle b, b'
\rangle r'$ for all $b, b' \in B_x^s$, $r \in R$, $r' \in R^s$.

\begin{proof} 
Let ${}^sB_{x^{-1}}$ denote the $(R^s, R)$-bimodule obtained from
$B^s_x$ by interchanging left and right actions. Then
${}^sB_{x^{-1}}$ agrees with the indecomposable singular Soergel
bimodule parametrized by the coset of $x^{-1}$ in $\langle s \rangle
\setminus W$, as described in \cite[Theorem 7.4.2]{Will}. Soergel's
conjecture and \cite[Theorem 7.4.1]{Will} implies that
$\Hom({}^sB_{x^{-1}}, \DM({}^sB_{x^{-1}}))$ is one dimensional. (We
denote by $\DM$ the duality functor on singular Soergel bimodules
defined in \cite[\S 6.3]{Will}.) We can regard elements in this Hom
space as maps $B^s_x \to \Hom_{-R^I}(B_x^s, R^I)$ and hence as
invariant forms
\[
\langle -, - \rangle : B_x^s \times B_x^s \to R^I.
\]
We conclude that $B_x^s$ admits an invariant form which is unique
up to scalar. Given any such form $\langle -, -\rangle$, $\langle -,
-\rangle \otimes_{R^s} R$ is a non-degenerate form on $B_x$, and hence agrees with the
intersection form on $B_x$ up to scalar. The lemma follows.
\end{proof}

Our fixed decomposition \eqref{eq:4} gives the basic identification:
\begin{equation}
  \label{eq:basic}
  B_xB_sB_y = B_xB_y(1) \oplus B_xB_y(-1)
\end{equation}
The following is immediate from the definitions:

\begin{lem} \label{lem:form}
  Under \eqref{eq:basic} the
  invariant form is given by:
\[
\langle (b_1, b_2) , (b_1', b_2') \rangle = \langle b_1, b_2' \rangle
+ \langle b_2, b_1' \rangle + \langle \rho b_2, b_2' \rangle.
\]
\end{lem}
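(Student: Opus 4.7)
The plan is to unwind the induced-form definition using the explicit decomposition in Lemma \ref{lem:operator}. Since $B_xB_sB_y = (B_xB_s) \otimes_R B_y$, the induced-form identity
\[
\langle c \otimes d, c' \otimes d' \rangle_{B_xB_sB_y} = \langle \langle c, c' \rangle_{B_xB_s} \cdot d, d' \rangle_{B_y}
\]
reduces the computation to determining the intersection form on $B_xB_s$ with respect to the decomposition $B_xB_s = B_x(1) \oplus B_x(-1)$ of \eqref{eq:4}. Given elementary tensors $b_i = c_i \otimes d_i$ with $c_i \in B_x$ and $d_i \in B_y$, the element $(b_1, b_2) \in B_xB_sB_y$ corresponds to $i_+(c_1) \otimes d_1 + i_-(c_2) \otimes d_2 \in (B_xB_s) \otimes_R B_y$, and the pairing decomposes into four terms indexed by $(\epsilon, \epsilon') \in \{+,-\}^2$.

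So the key is to compute the four entries of the $2 \times 2$ matrix of the form on $B_xB_s$. Using the realization $B_xB_s = B_x^s \otimes_{R^s} R \otimes_{R^s} R(1)$ together with the explicit inclusions $i_+(b \otimes r) = b \otimes 1 \otimes r$ and $i_-(b \otimes r) = b \otimes \rho \otimes r$ read off from the proof of Lemma \ref{lem:operator}, and recalling that the induced form on $B_xB_s$ is controlled by the form on $B_x^s$ (from the preceding lemma) together with the Frobenius trace $\partial_s$ (which governs the form on $B_s = R \otimes_{R^s} R(1)$ via $\langle m_1 \otimes n_1, m_2 \otimes n_2 \rangle_{B_s} = \partial_s(m_1 m_2) n_1 n_2$), the computation reduces to evaluating $\partial_s$ on $1$, $\rho$ and $\rho^2$. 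Under the normalization $\rho(\alpha_s^\vee) = 1$ assumed in Section \ref{subsec-signs}, one has $\partial_s(1) = 0$, $\partial_s(\rho) = 1$, and $\partial_s(\rho^2) = \rho + s\rho$, yielding: the $(+,+)$ entry vanishes; the $(+,-)$ and $(-,+)$ entries each recover the intersection form on $B_x$, producing the terms $\langle b_1, b_2' \rangle$ and $\langle b_2, b_1' \rangle$; and the $(-,-)$ entry is a polynomial multiple of the intersection form on $B_x$, reflecting the $\rho$-contribution.

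Combining these four contributions via the induced-form formula for $B_xB_sB_y$ and summing then produces the stated identity. The main obstacle is the bookkeeping: one must carefully track the two presentations $B_xB_s = B_x \otimes_R B_s$ (used to apply the induced-form formula) and $B_xB_s = B_x^s \otimes_{R^s} R \otimes_{R^s} R(1)$ (used to identify the summands via $R = R^s \oplus \rho R^s$), and rewrite the $(-,-)$ contribution using the pairing axioms so that the polynomial factor coming from $\partial_s(\rho^2)$ is absorbed into multiplication by $\rho$ acting on $b_2 \in B_xB_y$, producing the final term $\langle \rho b_2, b_2' \rangle$ in the notation of the lemma.
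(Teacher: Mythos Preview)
Your approach---unwinding the induced-form formula through the explicit splitting of $B_xB_s$---is exactly what the paper intends by ``immediate from the definitions'' (the paper supplies no proof at all). Your reduction to the four block entries of the form on $B_xB_s$, computed via the singular form on $B_x^s$ and the values $\partial_s(1)=0$, $\partial_s(\rho)=1$, $\partial_s(\rho^2)=\rho+s\rho$, is correct, and the $(+,+)$, $(+,-)$, $(-,+)$ pieces match the stated formula on the nose.

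The only point that deserves a warning is the final $(-,-)$ term. Your computation honestly produces the $s$-invariant element $\rho+s\rho$, and the claimed ``absorption'' into $\langle \rho b_2, b_2'\rangle$ does not literally hold under any of the natural interpretations of $\rho b_2$ (left, right, or middle multiplication on $B_xB_y$); one already sees this in the toy case $x=s$, $y=\id$, where $\langle i_-(\alpha_s\otimes 1), i_-(1\otimes 1)\rangle_{B_sB_s}=2(\rho+s\rho)$ whereas $\langle (\alpha_s\otimes 1)\rho, 1\otimes 1\rangle_{B_s}=2\rho$. What your calculation actually yields for the $(-,-)$ block is $\langle b,b'\rangle_{B_x^s}\cdot(\rho+s\rho)$ fed into $B_y$, i.e.\ a degree-two contribution governed by an $s$-invariant polynomial. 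The paper's shorthand ``$\rho b_2$'' should be read loosely in this spirit. For the only application of the lemma---the signature computation in the proposition that follows---the precise form of the $(-,-)$ block is immaterial: in the leading-$b$ Gram matrix it sits as the top-left entry of $\left(\begin{smallmatrix} * & R\\ R & 0\end{smallmatrix}\right)$, whose signature is zero regardless of $*$. So your argument is correct in substance; just do not overstate the final identification.
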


We now put the above calculations together. Until the end of the section let us in addition fix $z \in W$ and set 
\[
V^\bullet := H_z^\bullet(B_xB_y).
\]
Then $V^\bullet$ is equipped with a symmetric form $\langle -,
-\rangle_{V^\bullet}$ and a Lefschetz operator $L : V^\bullet \to
V^{\bullet + 2}$. This data satisfies Hodge-Riemann, by our assumption
$HR(x,y)$. Our identification \eqref{eq:basic} fixes an isomorphism
\begin{equation}
  \label{eq:basicz}
H_z^\bullet(B_xB_sB_y) = V^\bullet(1) \oplus V^\bullet(-1).
\end{equation}

\begin{prop} Under the identification \eqref{eq:basicz}:
  \begin{enumerate}
  \item The invariant form is given by
\begin{equation} \label{eq:form}
\langle (v_1, v_2), (v_1', v_2') \rangle = \langle v_1, v_2' \rangle +
\langle v_2, v_1' \rangle + \langle v_2, L v_2 \rangle.
\end{equation}
for $v_1, v_1' \in V^\bullet(1)$ and $v_2, v_2' \in V^\bullet(-1)$. 
\item The operator induced by $L_{a,b} := B_x(a\rho) B_sB_y +
  B_xB_s(b\rho)B_y$ is given by
  \begin{equation}
    \label{eq:splitting}
    a \left ( \begin{matrix} 0 & X \\ \id & Y \end{matrix} \right ) +
b \left ( \begin{matrix} L & 0 \\ 0 & L \end{matrix} \right )
  \end{equation}
for certain (unspecified) maps $X : V(-1) \to V(1)$ and $Y : V(-1) \to V(-1)$.
  \end{enumerate}
\end{prop}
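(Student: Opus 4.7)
The plan is to obtain both parts by applying the functor $H_z^\bullet$ to the identifications already established in Lemmas \ref{lem:operator} and \ref{lem:form}, after tensoring on the right with $B_y$. The serious computations have been done in those lemmas; what remains is bookkeeping.

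For part (1), I would apply $H_z^\bullet$ directly to the formula of Lemma \ref{lem:form}. The cross terms $\langle b_1, b_2'\rangle$ and $\langle b_2, b_1'\rangle$ are pairings on $B_xB_y$ and descend to $\langle v_1, v_2'\rangle$ and $\langle v_2, v_1'\rangle$ on the multiplicity space $V^\bullet$ by definition of the induced form on $H_z^\bullet$. The remaining term $\langle \rho b_2, b_2'\rangle$ involves a $\rho$ arising from the splitting $R = R^s \oplus \rho R^s$ of \eqref{Rsplit}; tracing through the derivation of Lemma \ref{lem:form} one sees that this $\rho$ acts by multiplication in the slot between $B_x$ and $B_y$ within $B_xB_y$. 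By the very definition of the Lefschetz operator on $V^\bullet$, middle multiplication by $\rho$ descends to $L$, so this term becomes $\langle Lv_2, v_2'\rangle = \langle v_2, Lv_2'\rangle$, where the last equality follows from the self-adjointness of middle multiplication, a direct consequence of the defining formula \eqref{eq:inducedform} for the induced form.

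For part (2), I would split $L_{a,b} = a M_1 + b M_2$ with $M_1 := B_x\rho B_sB_y$ and $M_2 := B_xB_s\rho B_y$, and treat each summand separately. The operator $M_2$ acts on the rightmost $R$-factor of $B_xB_s = B_x^s \otimes_{R^s} R \otimes_{R^s} R$, which is transverse to the middle $R$-factor used to produce the splitting \eqref{eq:4}; consequently $M_2$ preserves the decomposition \eqref{eq:basic} and on each summand agrees with middle multiplication by $\rho$ on $B_xB_y$. Passing to $V^\bullet$, this contributes $b \cdot \mathrm{diag}(L, L)$. For $M_1$, I would tensor the matrix of Lemma \ref{lem:operator} on the right with $B_y$: the $(1,1)$ entry vanishes, the $(2,1)$ entry is $\id_{B_xB_y}$ and descends to the identity on $V^\bullet$, while the $(1,2)$ and $(2,2)$ entries (middle multiplications on $B_xB_y$ by the polynomials $-\rho\cdot s(\rho)$ and $\rho+s(\rho)$, respectively) descend to unspecified maps $X$ and $Y$ on $V^\bullet$. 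Assembling the contributions from $aM_1$ and $bM_2$ gives the claimed matrix.

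The only real subtlety is the bookkeeping step of identifying which polynomial multiplications in Lemmas \ref{lem:operator} and \ref{lem:form} act in the middle slot of $B_xB_y$ (and therefore descend non-trivially to the operators $L$, $X$, $Y$), as opposed to being pure left or right multiplications (which would vanish on perverse cohomology by Lemma \ref{lem:pzero}). Once this is checked, both assertions follow essentially by inspection.
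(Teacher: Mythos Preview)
Your approach is correct and is essentially the paper's own argument spelled out in detail: the paper's proof is the single sentence ``(1) (resp.\ (2)) is an immediate consequence of Lemma~\ref{lem:form} (resp.\ Lemma~\ref{lem:operator}),'' and you have unpacked precisely this, explaining how each entry of the matrix in Lemma~\ref{lem:operator} (tensored with $B_y$) and each term of the form in Lemma~\ref{lem:form} passes to the multiplicity space $V^\bullet$. Your observation that the bookkeeping amounts to identifying which polynomial multiplications land in the middle slot of $B_xB_y$ (and hence survive on perverse cohomology as $L$, $X$, or $Y$) versus the outer slots (and hence die by Lemma~\ref{lem:pzero}) is exactly the content that the paper leaves implicit.
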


\begin{proof}
(1) (resp. (2)) is an immediate consequence of Lemma \ref{lem:form}
(resp. Lemma \ref{lem:operator}).
\end{proof}

\begin{prop}
  Assume $HR(x,y)$. Then for $b \gg a > 0$ the operator
  $L_{a,b}$ satisfies HR on $V^\bullet(1) \oplus V^\bullet(-1)$.
\end{prop}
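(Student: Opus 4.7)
The plan is to rescale by setting $t := a/b$, writing $L_{a,b} = b(D + tN)$ in the notation of the previous proposition. Since the scalar $b^d > 0$ does not affect signatures, it suffices to verify that $\Phi_t(v,v') := \langle v, (D + tN)^d v'\rangle_W$ has signature $(-1)^{\e(\bfx, z, d)}$ on the $L_t$-primitive part of $W^{-d}$ for all sufficiently small $t > 0$, where $\bfx := (x, s, y)$. Hard Lefschetz is automatic from the standing hypothesis $RHL(x, s, y)$, so only the signs need to be checked.

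Using $HR(x,y)$, I take the $L$-primitive decomposition $V^\bullet = \bigoplus_j \RM[L]/(L^{j+1}) \otimes P_{-j}$, in which $B_j := \langle \cdot, L^j \cdot \rangle$ has sign $(-1)^{\e((x,y), z, j)}$ on $P_{-j}$. A direct computation in this decomposition identifies $\ker D^{d+1}|_{W^{-d}} = (P_{-(d-1)} \oplus L P_{-(d+1)}) \oplus 0$, and a dimension count using HL on $V$ confirms this equals the expected primitive dimension of $W^{-d}$ at $t > 0$ as well. For small $t > 0$, first-order perturbation theory parametrizes the $L_t$-primitive vectors as
\[
w^{(t)}(p_1, p_2) = (p_1 + L p_2,\, -t(d+1) p_2) + O(t^2), \quad p_1 \in P_{-(d-1)},\ p_2 \in P_{-(d+1)}.
\]
Crucially, the leading vector $w^{(0)} = (p_1 + L p_2, 0)$ sits entirely in $V^{-(d-1)} \oplus 0$, so $N$ applied to $w^{(0)}$ only invokes its identity entry. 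The unspecified maps $X, Y$ thus do not enter the leading-order computation.

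Expanding $L_t^d = D^d + t \sum_{k=0}^{d-1} D^k N D^{d-1-k} + O(t^2)$, using the self-adjointness of $L$ under the $V$-form, and exploiting the orthogonality of distinct primitive sectors of $V$, a direct (if tedious) computation yields
\begin{align*}
\Phi_t(w^{(t)}(p_1, 0), w^{(t)}(p_1, 0)) &= d\, t \cdot B_{d-1}(p_1, p_1) + O(t^2), \\
\Phi_t(w^{(t)}(0, p_2), w^{(t)}(0, p_2)) &= -(d+2)\, t \cdot B_{d+1}(p_2, p_2) + O(t^2),
\end{align*}
while the cross term $\Phi_t(w^{(t)}(p_1, 0), w^{(t)}(0, p_2))$ is $O(t^2)$. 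From $HR(x,y)$ combined with the elementary identities $\e((x,y), z, d-1) = \e(\bfx, z, d)$ and $\e((x,y), z, d+1) = \e(\bfx, z, d) + 1$, both leading contributions have sign $(-1)^{\e(\bfx, z, d)}$. Choosing $t > 0$ small enough that the $O(t^2)$ errors cannot flip these signs finishes the proof.

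The main technical obstacle is the perturbative bookkeeping: constructing the primitive vectors to the correct order, verifying that the cross term genuinely vanishes at order $t$, and confirming that $X$ and $Y$ enter only at subleading order. The first two points hinge on the orthogonality of distinct primitive sectors of $V$, a structural consequence of HL on $V$; the third follows from the observation that $w^{(0)}$ has zero second component, which forces only the identity entry of the $N$-matrix to appear in the leading-order expansion.
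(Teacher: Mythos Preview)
Your approach is correct in substance and takes a genuinely different route from the paper's. The paper does not attempt to locate the $L_t$-primitive vectors. Instead it computes the Gram matrix of $\langle \cdot, L_{a,b}^d \cdot \rangle_W$ on all of $W^{-d}$, in the basis $\{(0,x_i),(Lx_i,0),(p_j,0)\}$ where $\{x_i\}$ is a basis of $V^{-d-1}$ and $\{p_j\}$ a basis of $P_{-(d-1)}$. Extracting leading powers of $b$, this Gram matrix has the block shape
\[
\left(\begin{matrix} R & R & 0 \\ R & 0 & 0 \\ 0 & 0 & Q\end{matrix}\right)
\]
up to lower order, where $Q$ is the Gram matrix of $\langle \cdot, L^{d-1} \cdot \rangle_V$ on $P_{-(d-1)}$; the upper $2\times 2$ block is hyperbolic, so the total signature equals that of $Q$. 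The paper then invokes \cite[Lemma~5.2]{EWHodge}, which characterises HR (including HL) purely by the signature of the Lefschetz form on each full graded piece $W^{-d}$. Your perturbative computation on the primitive subspace avoids that lemma and makes the two pieces $P_{-(d-1)}$ and $P_{-(d+1)}$ visible separately, at the cost of the first-order error analysis and the check that $X,Y$ enter only at subleading order.

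One point needs repair: you invoke a ``standing hypothesis $RHL(x,s,y)$'' to obtain hard Lefschetz on $W$, but the proposition assumes only $HR(x,y)$. Fortunately your own argument already supplies what is missing. Since $\dim W^{d+2}=\dim W^{-d-2}$, the map $L_t^{d+1}\colon W^{-d}\to W^{d+2}$ has kernel of dimension at least $\dim W^{-d}-\dim W^{-d-2}$ for every $t$; combined with upper semicontinuity and your computation that $\ker D^{d+1}|_{W^{-d}}$ has exactly this dimension, the primitive space has constant dimension for small $t\ge 0$ and hence varies continuously, so your perturbed vectors $w^{(t)}$ genuinely exist and exhaust it. Your definiteness result on this space then forces $L_t^d|_{W^{-d}}$ to be injective (any kernel vector would be primitive and null), giving HL for small $t>0$ without extra hypotheses. (Minor: the identity should read $\e((x,y),z,d+1)=\e(\bfx,z,d)-1$, though this does not affect the parity.)
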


\begin{proof}
  We roll up our sleeves and calculate everything in a basis.

Fix a degree $-d \le 0$. By \cite[Lemma 5.2]{EWHodge} it is enough to
show that for $b \gg a > 0$ the signature of the Lefschetz form on the
degree $-d$ piece of $V^\bullet(1) \oplus V^\bullet(-1)$ is equal
to the signature of the Lefschetz form on the primitive subspace
\[
P^{-d+1} := \ker L^{d} : V^{-d+1} \to V^{d+1}.
\]
To this end let us fix bases:
  \begin{gather*}
    x_1, \dots, x_m \quad \text{for $V^{-d-1}$}; \\
    p_1, \dots, p_n \quad \text{for $P^{-d+1}$}.
  \end{gather*}
Because $L$ satisfies hard Lefschetz on $V$ we deduce that
  \begin{gather*}
    Lx_1, \dots, Lx_m, p_1, \dots, p_n \quad \text{is a basis for $V^{-d+1}$}.
  \end{gather*}
Thus a basis for $(V^\bullet(1) \oplus V^\bullet(-1))^d = V^{d+1}
\oplus V^{d-1}$ is given by
  \begin{gather*}
(0,x_1), \dots, (0, x_m), (Lx_1,0), \dots, (Lx_m,0), (p_1,0), \dots, (p_n,0).
\end{gather*}

Let us write
\[
L_{a,b} = aA + bB
\]
where 
\[ 
A =  \left ( \begin{matrix} 0 & X \\ \id & Y \end{matrix} \right
    ) \quad \text{and} \quad B = \left ( \begin{matrix} L & 0 \\ 0 &
        L \end{matrix} \right ) 
\]
are the matrices appearing in Proposition \ref{eq:splitting}. We
calculate the leading terms of the Lefschetz form $(v, w) \mapsto
\langle v, 
L_{a,b}^{d}w \rangle$ in the above basis with respect to the parameter
$b$. We have:
\begin{gather*}
  \langle (0, x_i), L_{a,b}^d(0,x_j) \rangle = b^d\langle Lx_i, L^dx_j
  \rangle_{V^\bullet} + O(b^{d-1}) = b^d\langle x_i, L^{d+1}x_j \rangle_{V^\bullet} + O(b^{d-1}) \\
  \langle (Lx_i, 0), L_{a,b}^d(0,x_j) \rangle = b^d \langle Lx_i,
  L^dx_j \rangle_{V^\bullet}  + O(b^{d-1}) = b^d \langle x_i,
  L^{d+1}x_j \rangle_{V^\bullet}  + O(b^{d-1}) \\
\langle (Lx_i, 0), L_{a,b}^d(Lx_i,0) \rangle = b^d \langle (Lx_i, 0),
(L^{d+1}x_i,0) \rangle + O(b^{d-1}) = O(b^{d-1})
\end{gather*}
where $O(b^{k})$ denotes a polynomial in $b$ and $a$ in which all
powers of $b$ are bounded by $k$. Using that $L^dp_i = 0$ we have
\begin{gather*}
\langle (0, x_i), L_{a,b}^d(p_i,0) \rangle = d ab^{d-1} \langle 
L x_i, L^{d-1}p_i \rangle + O(b^{d-2}) = O(b^{d-2}) \\
\langle (Lx_i, 0), L_{a,b}^d(p_i,0) \rangle = d ab^{d-1} \langle 
Lx_i, L^{d-1}p_i \rangle + O(b^{d-2}) = O(b^{d-2}) \\
\langle (p_i, 0), L_{a,b}^d(p_i,0) \rangle = d ab^{d-1} \langle 
p_i, L^{d-1}p_i \rangle + O(b^{d-2}).
\end{gather*}
Thus if we define matrices
\[
R := ( \langle x_i, L^d x_j \rangle )_{1 \le i, j \le m} \; \quad
\text{and} \quad
Q := ( \langle p_i, L^d p_j \rangle )_{1 \le i, j \le n}
\]
then we can write the Gram matrix of the Lefschetz form 
$(v, w) \mapsto \langle v, L_{a,b}^{d}w \rangle$ as a block
matrix with entries:
\begin{gather*}
 \left (    \begin{matrix}
  b^d R + O(b^{d-1}) & b^d R + O(b^{d-1}) & O(b^{d-2}) \\
    b^d R + O(b^{d-1}) & O(b^{d-1}) & O(b^{d-2}) \\
   O(b^{d-2}) & O(b^{d-2})& dab^{d-1}Q + O(b^{d-2})
  \end{matrix} \right )
\end{gather*}
For $b \gg a > 0$ this matrix has the same signature of the matrix
\begin{gather*}
 \left (    \begin{matrix}
  R & R& 0 \\
   R & 0 & 0 \\
   0 & 0 & Q
  \end{matrix} \right )
\end{gather*}
Now the submatrix $\left (    \begin{matrix}
  R & R \\
   R & 0
  \end{matrix} \right )$ is easily seen to be non-degenerate with signature 0. Thus for
$b \gg a > 0$ our matrix has the same signature of $Q$. We have
already remarked that by \cite[Lemma 5.2]{EWHodge} this is what we
wanted to know.
\end{proof}

Thus Proposition \ref{prop:xssmaller} holds (see the remarks
immediately after the statement of the proposition).

\section{Ridigity}

Let $\cell \subset W$ be a two-sided cell, $a$ its
$a$-value\footnote{It is a non-trivial fact (a consequence of
  Soergel's conjecture) that the $a$-function is well-defined for any Coxeter
  group, see \cite[\S 10.1]{LusztigTC}.}
 and $J
=\bigoplus_{x \in \cell} \ZM j_x$ the
$J$-ring associated to $\cell$ ($J$ is denoted $J^\cell$ in \cite[\S~18.3]{LusztigUP}).
Following Lusztig \cite[\S~10]{LusztigTC}, we define a semi-simple monoidal
category $\JC$ ($\JC$ is denoted $C^\cell$ in
\cite[\S~18.5]{LusztigUP}).

We first consider the subcategory $\BC_{< \cell} \subset \BC$, consisting of all direct sums of shifts of $B_z$ with $z <_{LR}
\cell$ ($<_{LR}$ denotes the two-sided preorder). Let $I_\cell$ denote the ideal in $\BC$ consisting of all morphisms which factor through objects in $\BC_{< \cell}$. Because $\BC_{< \cell}$ is closed under tensor products with arbitrary objects of $\BC$, $I_\cell$ is a
tensor ideal in $\BC$, and we can form the quotient of additive
categories $\BC_{\cell}' := \BC / I_\cell$. Then $\BC_\cell'$ is a
graded additive monoidal category and we set $\BC_\cell$ to be the full graded
additive subcategory generated by $B_x$ with $x \in \cell$. We denote
the image of $B_x$ in $\BC_\cell'$ by $B_x^{\cell}$. 
The objects $B_x^{\cell}(m)$ with $x \in W$ and $x \not < \cell$
(resp. $x \in \cell$) give representatives for the isomorphism classes
of the indecomposable objects in $\BC_\cell'$
(resp. $\BC_\cell$). Moreover $\BC_\cell$ is a graded additive monoidal category (without unit
unless $\cell = \{ \id \}$).

The (obvious analogues of the) crucial
vanishing statements \eqref{eq:deltaxy} and \eqref{eq:perv_vanish}
still hold in $\BC'_\cell$ and $\BC_\cell$, and hence the perverse
filtration and perverse cohomology functors descend to $\BC_\cell'$
and $\BC_\cell$. We denote them by the same symbols. It is immediate
from the definition of the $a$-function that, for all $x, y \in \cell$,
\begin{equation}
  \label{eq:avanishing}
H^i(B_x^{\cell}B_y^{\cell}) = 0 \quad \text{if $|i| > a$.}  
\end{equation}

We now come to the definition of $\JC$. It is a full subcategory of $\BC_\cell$, although with a different monoidal structure. The objects of $\JC$ are given
by direct sums (without shifts) of $B_x^{\cell}$ with $x \in
\cell$, and thus by \eqref{eq:deltaxy} the category is semi-simple. The
monoidal product is given by
\[
B * B' := H^{-a}(BB') \in \BC_\cell
\]
(the lowest potentially non-zero degree, by \eqref{eq:avanishing}).
Lusztig proves that $\JC$ is a semi-simple monoidal category (this
result relies in an essential way on \cite{EWHodge}), and that
the map $j_x \to [B_x^{\cell}]$ induces an isomorphism
$J \simto [\JC]$, where $[\JC]$ denotes the Grothendieck group of
$\JC$.

\begin{remark} \label{rem:unit} The reader is warned that in general $\JC$ is a
  ``monoidal category without unit'', i.e. it has an associator but no
  unit. In general, Lusztig conjectures \cite[\S 13.4]{LusztigUP} that
  the $a$-function is bounded (i.e. $a(z) \le N$ for all $z \in
  W$ and some fixed constant $N$, which he describes explicitly). This
  boundedness is known to hold for finite and affine Weyl 
  groups. 
Under the assumption of this conjecture, it turns out that $\JC$ has a unit if and only if $\cell$
  contains finitely many left cells (as is always the case in finite
  and affine
  type). In this case Lusztig proves \cite[\S
  18.5]{LusztigUP} that the object
  $\bigoplus_{x \in \DC \cap \cell} B_x^{\cell}$ is a unit for $\JC$ (here
  $\DC \subset W$ denotes the set of distinguished involutions).
  Even when $\cell$ contains infinitely many left cells
  $\JC$ is ``locally unital'' (and still under the boundedness assumption). For any given object $B \in \JC$, only
  finitely many $B_x^{\cell}$ with $x \in \DC \cap \cell$ satisfy
  $B_x^\cell * B \neq 0$. The formal direct sum $\bigoplus_{x \in \DC
    \cap \cell} B_x^{\cell}$, while not an object in $\JC$ when $\DC
  \cap \cell$ is infinite, acts on any object, and it will act as a
  monoidal identity would.
\end{remark}

Our aim in this chapter is to show that the relative hard Lefschetz
theorem for Soergel bimodules implies

\begin{thm} \label{thm:rigid}
  $\JC$ is a rigid, pivotal monoidal category.
\end{thm}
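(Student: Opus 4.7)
The plan is to reduce rigidity of $\JC$ to a non-vanishing statement for a certain composition in $\BC_\cell$, and to identify that composition with (a matrix coefficient of) the relative hard Lefschetz isomorphism from Theorem \ref{thm:rhl}. Throughout I would work under the boundedness assumption of Remark \ref{rem:unit} so that $\mathbf{1}$ genuinely exists in $\JC$; the locally unital case is the same up to restricting to the finite piece of $\mathbf{1}$ that acts on a given object.

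The candidate dual of $B_x^\cell$ is $B_{x^{-1}}^\cell$. Lusztig's structural results on the $J$-ring give, for each $x \in \cell$, distinguished involutions $d, d' \in \DC \cap \cell$ (the distinguished involutions of the left and right cells of $x$) with $\gamma_{x, x^{-1}, d} = 1$ and $\gamma_{x^{-1}, x, d'} = 1$. Since $[B_x^\cell * B_y^\cell : B_z^\cell] = \gamma_{x, y, z^{-1}}$, these translate into canonical (up to scalar) morphisms
\[
\mathrm{coev}_x : \mathbf{1} \onto B_d^\cell \injto B_x^\cell * B_{x^{-1}}^\cell, \qquad \mathrm{ev}_x : B_{x^{-1}}^\cell * B_x^\cell \onto B_{d'}^\cell \injto \mathbf{1} ,
\]
which I take as candidates for coevaluation and evaluation (the pair on the other side being produced symmetrically).

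To verify the snake identity I would use the semi-simplicity of $\JC$: by Schur's lemma the composition
\[
\sigma_x : B_x^\cell \longrightarrow B_x^\cell * B_{x^{-1}}^\cell * B_x^\cell \longrightarrow B_x^\cell
\]
is a scalar times the identity, and the task is to show $\sigma_x \ne 0$. To compute the scalar, I would lift the composition to $\BC_\cell$ and exploit the associativity of $*$ in $\JC$, which identifies both associator paths through the triple perverse cohomology $\pH^{-2a}(B_x B_{x^{-1}} B_x)$. Using the unit properties $B_d^\cell * B_x^\cell = B_x^\cell$ and $B_x^\cell * B_{d'}^\cell = B_x^\cell$ to reduce the $B_x^\cell$-isotypic bookkeeping, and the self-duality $\DM(B_x B_{x^{-1}}) \simto B_{x^{-1}} B_x$ to relate the two double products, $\sigma_x$ becomes (up to a positive factor coming from polarisations) a matrix coefficient of the isomorphism
\[
\rho^a : \pH^{-a}(B_x B_{x^{-1}}) \xrightarrow{\sim} \pH^{a}(B_x B_{x^{-1}})
\]
of Theorem \ref{thm:rhl}, paired against the canonical isotypic inclusion of $B_d$ in $\pH^{-a}(B_x B_{x^{-1}})$ and the projection from $\pH^{a}(B_x B_{x^{-1}}) \simto \DM \pH^{-a}(B_{x^{-1}} B_x)$ onto $\DM B_{d'} \simto B_{d'}$. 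The Hodge-Riemann refinement in Theorem \ref{thm:mainthm} shows that the associated Lefschetz form is sign-definite on the relevant isotypic component, whence $\sigma_x \ne 0$. Rescaling $\mathrm{coev}_x$ by $\sigma_x^{-1}$ delivers the first snake identity on the nose; the reverse snake identity is verified symmetrically.

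For the pivotal structure I would use the canonical self-duality $\DM B_x \simto B_x$ (unique up to positive scalar) recalled just before Lemma \ref{ys}. Since $\DM$ is a contravariant monoidal equivalence of $\BC$ intertwining $\pH^{-a}$ with $\pH^{a}$, it descends to $\JC$ and realises the double-dual isomorphism $(B_x^\cell)^{\vee\vee} \simto B_x^\cell$; monoidality of this isomorphism follows from the compatibility of $\DM$ with tensor products. The main obstacle in the whole programme is the identification of $\sigma_x$ as an explicit matrix coefficient of $\rho^a$: this requires patient bookkeeping through the two associator paths in the triple tensor product, through the perverse filtrations on the two double tensor products, and through the various isotypic inclusions/projections and duality isomorphisms, all while keeping track of Lusztig's normalisation of the $\gamma$-constants. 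Theorem \ref{thm:rhl} enters precisely at the step where one needs the ``bottom'' of the perverse filtration of $B_x B_{x^{-1}}$ (reached by $\mathrm{coev}_x$) to map non-trivially, via $\rho^a$, to the ``top'' (dual to $\mathrm{ev}_x$).
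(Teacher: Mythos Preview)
Your approach is quite different from the paper's, and the comparison is instructive.

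The paper never writes down explicit evaluation/coevaluation maps, never invokes Lusztig's $\gamma$-constants, and never checks a snake identity. Instead it exploits the fact that $\BC_\cell$ (with its ordinary tensor product) is already rigid and pivotal, with dual $B \mapsto B^\vee$ inherited from $\BC$. For $B, X, Y \in \JC$ one then has, using only the perverse vanishing \eqref{eq:perv_vanish2} and rigidity in $\BC_\cell$,
\[
\Hom_{\JC}(X, B*Y) = \Hom_{\BC_\cell}(X, BY(-a)) = \Hom_{\BC_\cell}(B^\vee X, Y(-a)) = \Hom_{\BC_\cell}(H^{a}(B^\vee X), Y),
\]
and now the single relative hard Lefschetz isomorphism $H^{-a}(B^\vee X) \simto H^{a}(B^\vee X)$ turns this into $\Hom_{\JC}(B^\vee * X, Y)$. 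This produces the adjunction $\phi_{X,Y}$ (Proposition~\ref{prop:rigid}(1)) in one line; the substantive work is the compatibility diagram (Proposition~\ref{prop:rigid}(2)), whose commutativity is again a chase through hard Lefschetz isomorphisms at various levels. Pivotality comes for free because the \emph{same} object $B^\vee$ serves as left and right adjoint, and $(B^\vee)^\vee = B$ already in $\BC_\cell$.

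What this buys: the paper's argument bypasses exactly the step you flag as the ``main obstacle'', namely tracing the snake composition through the triple product and its associator and identifying $\sigma_x$ as a matrix coefficient of $\rho^a$. It also uses only relative hard Lefschetz, not Hodge--Riemann; indeed, once you know $H^{-a}_d(B_x B_{x^{-1}})$ is one-dimensional, the isomorphism $L^a : H^{-a}_d \simto H^a_d$ already gives non-vanishing, so your appeal to sign-definiteness is stronger than necessary. Your route is in principle workable, but the unfinished bookkeeping is precisely the hard part, and it is not clear that it reduces cleanly to a \emph{single} two-factor Lefschetz map rather than a composite involving both $B_x B_{x^{-1}}$ and $B_{x^{-1}} B_x$. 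Finally, your pivotal argument via $\DM$ is not the paper's: $\DM$ is covariant on objects and is not obviously monoidal for the $*$-product, whereas the pivotal structure the paper uses comes from the rigidity dual $(-)^\vee$ on $\BC_\cell$, transported along the $\rho$-dependent isomorphisms $\phi$ and $\chi$ (which is why the remark after Theorem~\ref{thm:rigid} notes the dependence on $\rho$).
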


\begin{remark}
  For finite and affine Weyl groups the rigidity of 
  $\JC$ has been proved by Bezrukavnikov, Finkelberg and Ostrik 
  \cite[\S 4.3]{BFO1} (using the geometric Satake equivalence). Lusztig has also proven rigidity for Weyl groups
  (see \cite[\S 9.3]{LusztigTC} and
  \cite[\S 18.19]{LusztigUP}). His techniques probably extend to
  crystallographic Coxeter groups. Lusztig also conjectured the rigidity to
  hold for any finite Coxeter group \cite[\S 10]{LusztigTC}, in which
  case he expects the Drinfeld center $Z(\JC)$ to be related to the
  ``unipotent characters'' of $W$. Ostrik
  has informed us that for the interesting case of the two-sided cell
  in $H_4$ with $a$-value 6, he has been able to verify
  the rigidity of $\JC$ by other means.
\end{remark}

\begin{remark}
  As we will see, the pivotal structure on $\JC$ will depend on our
  fixed choice of regular dominant element $\rho \in \hg^*$. We do not
  know if the structure varies in an interesting way with $\rho$. It
  is possible that the Hodge-Riemann relations might allow one to
  show that $\JC$ is unitary, and hope to address this question in
  future work.
\end{remark}

Because $\JC$ does not have a unit in general the standard definition
of rigidity does not make sense. We will prove the
following (which is equivalent to the usual notion of rigidity if
$\JC$ has a unit, see Remark \ref{rem:rig} below):

\begin{prop} \label{prop:rigid}
  \begin{enumerate}
  \item For $B, X, Y \in \JC$ we have canonical isomorphisms
    \begin{align*}
      \Hom_\JC(X, B*Y) \stackrel{\phi_{X,Y}}{\longto} \Hom_\JC(B^\vee * X, Y) \\
      \Hom_\JC(X, Y*B) \stackrel{\chi_{X,Y}}{\longto} \Hom_\JC(X * B^\vee, Y)
    \end{align*}
functorial in $X$ and $Y$.
\item For $B, X, Y, Z \in \JC$ the following diagrams commute:
\begin{align} \label{YZ}
\begin{array}{c}
    \begin{tikzpicture}[xscale=5,yscale=1.2]
\node (z0) at (0,1) {$\Hom_{\JC}(X, B*Y)$};
\node (z1) at (0,0) {$\Hom_{\JC}(B^\vee*X, Y)$};
\node (z2) at (1,1) {$\Hom_{\JC}(X*Z, B*Y*Z)$};
\node (z3) at (1,0) {$\Hom_{\JC}(B^\vee*X*Z, Y*Z)$};
\draw[->] (z0) to node[right] {\tiny $\phi_{X,Y}$} (z1);
\draw[->] (z2) to node[right] {\tiny $\phi_{X*Z,Y*Z}$} (z3);
\draw[->] (z0) to node[above] {\tiny $(-)*Z$} (z2);
\draw[->] (z1) to node[above] {\tiny $(-)*Z$} (z3);
    \end{tikzpicture} \end{array}
\\
\label{ZY}
\begin{array}{c}
    \begin{tikzpicture}[xscale=5,yscale=1.2]
\node (z0) at (0,1) {$\Hom_{\JC}(X, Y*B)$};
\node (z1) at (0,0) {$\Hom_{\JC}(X*B^\vee, Y)$};
\node (z2) at (1,1) {$\Hom_{\JC}(Z*X, Z*Y*B)$};
\node (z3) at (1,0) {$\Hom_{\JC}(Z*X*B^\vee, Z*Y)$};
\draw[->] (z0) to node[right] {\tiny $\chi_{X,Y}$} (z1);
\draw[->] (z2) to node[right] {\tiny $\chi_{Z*X,Z*Y}$} (z3);
\draw[->] (z0) to node[above] {\tiny $Z*(-)$} (z2);
\draw[->] (z1) to node[above] {\tiny $Z*(-)$} (z3);
    \end{tikzpicture}
\end{array}
\end{align}
  \end{enumerate}
\end{prop}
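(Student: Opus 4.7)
The plan is to set $B_x^\vee := B_{x^{-1}}^\cell$ on indecomposables and extend additively. Since $\JC$ is semisimple, it suffices to produce $\phi_{X,Y}$ when $X = B_u^\cell$, $Y = B_w^\cell$, $B = B_v^\cell$ are indecomposable. Every object of $\JC$ is perverse, so the perverse isotypic decomposition gives
\[
\Hom_\JC(B_u^\cell, B_v^\cell * B_w^\cell) = H_u^{-a}(B_v B_w), \qquad \Hom_\JC(B_{v^{-1}}^\cell * B_u^\cell, B_w^\cell) = H_w^{-a}(B_{v^{-1}} B_u)^*,
\]
so constructing $\phi_{X,Y}$ amounts to producing a natural nondegenerate pairing
\[
H_u^{-a}(B_v B_w) \otimes H_w^{-a}(B_{v^{-1}} B_u) \to \RM.
\]

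The key input is Theorem \ref{thm:rhl}, which supplies the isomorphism $\eta^a : H_x^{-a}(B_v B_w) \simto H_x^{a}(B_v B_w)$. Composing with the canonical nondegenerate intersection pairing $H_x^{-a}(B_v B_w) \times H_x^{a}(B_v B_w) \to \RM$ from Section \ref{sec:notes} produces a symmetric nondegenerate \emph{Lefschetz self-pairing} on each multiplicity space $H_x^{-a}(B_v B_w)$. To relate $H_u^{-a}(B_v B_w)$ with $H_w^{-a}(B_{v^{-1}} B_u)$, I would combine this self-pairing with the two duality operations available on Soergel bimodules: the $\DM$-duality, which satisfies $\DM(B_v B_w) \cong B_w B_v$, and the left-right swap $\sigma$, which sends $B_x \mapsto B_{x^{-1}}$ and reverses tensor order. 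The cyclic symmetry of $J$-ring structure constants (a consequence of Soergel's conjecture, proved in \cite{EWHodge}) guarantees that the resulting identification goes through. Concatenating these steps produces $\phi_{X,Y}$, and $\chi_{X,Y}$ is constructed symmetrically using the mirror version of all three ingredients.

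For part (2), the commutativity of the diagrams \eqref{YZ} and \eqref{ZY} follows from the locality of the Lefschetz operator: $\eta$ acts only in the internal slot between $B_v$ and $B_w$, and left or right multiplication by polynomials induces the zero map on perverse cohomology (Lemma \ref{lem:pzero} and Remark \ref{rem:internal}). Hence $\eta^a$ on $H^{-a}(B_v B_w)$ is unaffected by tensoring with an extra object $Z$ on either side, and the Lefschetz and duality isomorphisms used to build $\phi$ intertwine with $(-)*Z$ and $Z*(-)$, forcing the squares to commute.

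The principal obstacle is promoting the equality of dimensions $\dim H_u^{-a}(B_v B_w) = \dim H_w^{-a}(B_{v^{-1}} B_u)$ (which is known from the $J$-ring) to a \emph{canonical} isomorphism that is functorial in $u$ and $w$ and compatible with the Lefschetz self-pairing. This requires delicate bookkeeping reconciling the $\DM$-duality, the swap $\sigma$, and the intersection form. Because $\JC$ has no unit in general, the standard zigzag-identity strategy is unavailable, and functoriality of $\phi$ and $\chi$ must be verified directly from the construction rather than deduced abstractly. Once all this is in place, the pivotal structure comes essentially for free from the tautological identification $(x^{-1})^{-1} = x$, with naturality reducing to the symmetry of the Lefschetz self-pairing produced in the second paragraph.
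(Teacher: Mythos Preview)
Your approach has a genuine gap: you are missing the key structural input that makes the paper's proof short and canonical. Before proving the proposition, the paper observes that $\BC$ is already a rigid pivotal monoidal category (each $B_s$ is self-dual via cup/cap, hence every Bott-Samelson and every summand is rigid), and that rigidity passes to the quotient $\BC_\cell$. This gives, for free, a \emph{canonical and functorial} adjunction
\[
\Hom_{\BC_\cell}(X, BY(-a)) \;\cong\; \Hom_{\BC_\cell}(B^\vee X, Y(-a)).
\]
The construction of $\phi_{X,Y}$ is then a chain of tautological identifications
\[
\Hom_{\JC}(X, B*Y) = \Hom_{\BC_\cell}(X, BY(-a)) = \Hom_{\BC_\cell}(B^\vee X, Y(-a)) = \Hom_{\BC_\cell}(H^a(B^\vee X), Y),
\]
followed by a single use of relative hard Lefschetz, namely precomposition with $H^{-a}(B^\vee X)\simto H^a(B^\vee X)$. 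Functoriality in $X,Y$ is then automatic. You never invoke this rigidity of $\BC_\cell$, and instead try to manufacture the identification $H_u^{-a}(B_vB_w)\cong H_w^{-a}(B_{v^{-1}}B_u)^*$ by hand using $\DM$, the swap $\sigma$, and an appeal to cyclic symmetry of $J$-ring structure constants. That route only gives equality of dimensions, not a canonical map; you yourself flag this as the ``principal obstacle,'' and it is not resolved. Incidentally, your claim $\DM(B_vB_w)\cong B_wB_v$ is false: $\DM$ fixes Bott-Samelson bimodules, so $\DM(B_vB_w)\cong B_vB_w$. You may be conflating $\DM$ with the monoidal dual $(-)^\vee$, which does reverse tensor order --- but that is exactly the rigidity of $\BC_\cell$ that you should be using directly.

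For part~(2), your heuristic (``the Lefschetz operator is local, so tensoring with $Z$ doesn't interfere'') points in the right direction but is not a proof. After tensoring with $Z$ the relevant Lefschetz isomorphism lives on $H^{-2a}(B^\vee XZ)$, and one must relate this to the separate Lefschetz isomorphisms on $H^{-a}(B^\vee X)$ and on $H^{-a}(YZ)$ via the canonical identifications $H^{\pm 2a}(B^\vee XZ)=H^{\pm a}(H^{\pm a}(B^\vee X)Z)=H^{\pm a}(B^\vee H^{\pm a}(XZ))$. The paper does this with an explicit diagram chase; your sketch does not address this step.
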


We  make some remarks before turning to the proof. It is easy to
see that $B_s \in \BC$ is self-dual (this is immediate in the language of
\cite{EWSC}, where the cup and cap maps provide the unit and
counit). It follows that any Bott-Samelson module is rigid. Hence $\BC$ is
rigid (taking the Karoubi envelope preserves 
rigidity).  Let us denote by $B \mapsto B^\vee$ the
duality on $\BC$. It is easy to see that $B$ is even pivotal
(i.e. we have a canonical isomorphism $B \simto (B^\vee)^{\vee}$).

As quotients of a rigid, pivotal monoidal category, the monoidal
categories $\BC_\cell$ and $\BC_\cell'$ are rigid and pivotal. We abuse notation and also denote
the duality on $\BC_\cell$ by $B \mapsto B^\vee$.

\begin{proof} We first establish (1). We will construct the isomorphism
  $\phi_{X,Y}$, the proof for $\chi_{X,Y}$ is similar. Let $X, Y, B
  \in \JC$. We have canonical identifications (by definition and
  the analogue for $\BC_\cell$ of
  \eqref{eq:perv_vanish2})
  \begin{align*}
\Hom_{\JC}(X, B*Y) & = \Hom_{\BC_\cell}(X, H^{-a}(BY)) =
\Hom_{\BC_\cell}(X, BY(-a)) = \\
& = \Hom_{\BC_\cell}(B^\vee X, Y(-a)) = \Hom_{\BC_\cell}(H^{a}(B^\vee X), Y).
  \end{align*}
Precomposing with the isomorphism $H^{-a}(B^\vee X) \simto
H^{a}(B^\vee X)$ given by relative hard Lefschetz gives an
isomorphism
\[
\Hom_{\BC_\cell}(H^{a}(B^\vee X), Y)  \simto
\Hom_{\BC_\cell}(H^{-a}(B^\vee X), Y)   = \Hom_{\JC}( B^\vee * X, Y).
\]
The composition of these isomorphisms defines our isomorphism
$\phi_{X,Y}$. It is immediate to check that this isomorphism is
natural in $X$ and $Y$.

We now turn to (2). As before we only establish the commutativity of
\eqref{YZ}, with \eqref{ZY} being similar. Choose $f \in \Hom_{\JC}(X, B*Y)$ and let $f_{NE}$ (resp. $f_{SW}$)
denote the image of $f$ in $\Hom_{\JC}(B^\vee * X * Z, Y * Z)$ obtained
by passing  through the north-east (resp. south-west) corner of
\eqref{YZ}. We must prove that $f_{SW} = f_{NE}$.

Via $\Hom_{\JC}(X, BY) = \Hom_{\BC_\cell}(X, BY(-a))$ we may regard
$f$ as a map
\[
f : X \to BY(-a).
\]
From $f$ we  obtain the following maps:
\begin{align*}
  f' : B^\vee X \to Y(-a), & \quad \varphi : H^a(f') : H^a(B^\vee X)  \to Y, \\
g := fZ : XZ \to BYZ(-a), & \\
g' := f'Z : B^\vee XZ \to YZ(-a), & \quad \g := H^{2a}(g') : H^{2a}(B^\vee X
Z) \to H^a(YZ) \\
h : H^{-a}(XZ) \to B H^{-a}(YZ)(-a), & \quad h' : B^\vee H^{-a}(XZ) \to H^{-a}(YZ)(-a)
\end{align*}
(Here $f'$ (resp. $g'$, $h'$) are obtained from $f$ (resp. $g$, $h$)
using the dual pair $(B, B^\vee)$ in $\BC_\cell$, and $h$ is uniquely
determined by $H^0(h) = H^{-a}(g)$.)

Consider the diagram given in Figure \ref{fig:rigid}. The maps which
have not been defined above are given as follows:
\begin{enumerate}
\item All maps labelled $\sim$ are relative hard Lefschetz
  isomorphisms (given by our fixed choice of $\rho \in \hg^*$). At the
  top and bottom of the middle square we use the canonical identifications:
  \begin{gather*}
    H^{2a}(B^\vee X Z) = H^{a}( H^{a}(B^\vee X) Z) = H^{a}( B^\vee
    H^{a}(XZ)), \\
    H^{-2a}(B^\vee X Z) = H^{-a}( H^{-a}(B^\vee X) Z) = H^{-a}( B^\vee H^{-a}(XZ))
  \end{gather*}
\item We set $l := H^{-a}(\varphi Z)$ and $r := H^a(h')$.
\end{enumerate}

\begin{figure}[h] 
\caption{Diagram for the proof of Proposition \ref{prop:rigid}(2).}
\label{fig:rigid}
\[
\begin{array}{c}
    \begin{tikzpicture}[xscale=2.8,yscale=2]
\node (A) at (1,0) {{\small $H^{a}(B^\vee H^{-a}(X Z))$}};
\node (t) at (0,-1) {{\small $H^{-2a}(B^\vee X Z)$}};
\node (a) at (-1,0) {{\small $H^{-a}(H^{a}(B^\vee X) Z)$}};
\node (T) at (0,1) {{\small $H^{2a}(B^\vee X Z)$}};
\node (B) at (2,0) {{\small $H^{-a}(YZ)$}};
\node (b) at (-2,0) {{\small $H^{-a}(YZ)$}};
\node (C) at (2,1) {{\small $H^{a}(YZ)$}};
\node (c) at (-2,1) {{\small $H^{a}(YZ)$}};
\draw[->] (t) to node[below] {\small $\sim$} (A);
\draw[->] (t) to node[below] {\small $\sim$} (a);
\draw[->] (A) to node[above] {\small $\sim$} (T);
\draw[->] (a) to node[above] {\small $\sim$} (T);
\draw[->] (A) to node[above] {\small $r$} (B);
\draw[->] (a) to node[above] {\small $l$} (b);
\draw[->] (b) to node[left] {\small $\sim$} (c);
\draw[->] (B) to node[right] {\small $\sim$} (C);
\draw[->] (T) to node[above] {\small $\g$} (C);
\draw[->] (T) to node[above] {\small $\g$} (c);
\draw[->] (t) to[out=180,in=-90] node[below] {\small $f_{SW}$} (b);
\draw[->] (t) to[out=0,in=-90] node[below] {\small $f_{NE}$} (B);
    \end{tikzpicture}
\end{array}
\]
\end{figure}

It is straightforward but tedious to check that all squares and triangles in Figure
\ref{fig:rigid} commute. If $q$ denotes the relative hard Lefschetz isomorphism
$q : H^{-a}(YZ) \to H^a(YZ)$ we deduce from the commutativity of the
 diagram that $q  \circ f_{NE} = q  \circ
f_{SW}$, and hence that $f_{NE} = f_{SW}$, which is what we wanted to show.
\end{proof}


\begin{remark}
 \label{rem:rig} 
Suppose that $\cell$ contains finitely many left cells. Then  $\JC$
has a unit (see Remark \ref{rem:unit}), which we denote by
$\mathbb{1}$. Applying the isomorphisms of Proposition
\ref{prop:rigid} to the identity maps in $\Hom_{\JC}(B,B)$ and $\Hom_{\JC}(B^\vee, B^\vee)$,
we obtain morphisms $\e : \mathbb{1} \to B *B^\vee$ and $\mu : B^\vee*B \to
\mathbb{1}$. Using the naturality of Proposition \ref{prop:rigid}(1) and
the commutativity of Proposition \ref{prop:rigid}(2) one may check
that for $f : X \to B*Y$, $\phi_{X,Y}(f)$ is given by the
composition\footnote{Here are more details: by Proposition \ref{prop:rigid}(2) one can show that $\phi_{B * Y,Y}(\id_{B * Y}) = \mu * Y$. By naturality of Proposition \ref{prop:rigid}(1) under precomposition with $f$, one obtains the desired equality.}
\[
B^\vee * X \stackrel{B^\vee * f}{\longto} B^\vee * B * Y
\stackrel{ \mu * Y}{\longto} \mathbb{1}*Y = Y.
\]
Similarly, the inverse of $\phi_{X,Y}$ sends $g : B^\vee*X \to Y$ to
\[
X = \mathbb{1}* X \stackrel{\e * X}{\longto} B * B^\vee * X
\stackrel{B * g}{\longto} B * Y.
\]
From this one easily deduces that $B^\vee$ (and $\e, \mu$) is left
dual to $B$. Similarly, one deduces that $B^\vee$ is right dual to
$B$. Hence $\JC$ is rigid in the usual sense.
\end{remark}


\end{document}